\documentclass[a4paper,11pt]{amsart}

% Packages
\usepackage[all]{xy}

\usepackage[utf8]{inputenc}		% font and encoding
\usepackage[T1]{fontenc}
\usepackage[english]{babel}

\usepackage{amsfonts}			% ams-packages
\usepackage{amsmath}
\usepackage{amssymb}
\usepackage{amsthm}
\usepackage{mathtools}
\makeatletter					% Add the 2020 MSC, which is not yet in the
\@namedef{subjclassname@2020}{	% package by default. Remove this when it's added.
	\textup{2020} Mathematics Subject Classification}
\makeatother

\usepackage{graphicx}			% images and drawing
\usepackage{tikz}
	\usetikzlibrary{cd}

\usepackage{hyperref}			% hyperlinks
	\hypersetup{colorlinks=false, urlcolor=black, linkcolor=black}

\usepackage{enumitem}			% arbitrary list items

\usepackage{color}				% colored text for signaling changes

%Symbols
\newcommand{\N}{\mathbb{N}}						% Nonnegative integers
						% Integers
						% Rationals
\newcommand{\R}{\mathbb{R}}						% Reals
\newcommand{\C}{\mathbb{C}}						% Complex numbers
\newcommand{\K}{\mathbb{K}}						% Field of coefficients

\renewcommand{\S}{\mathbb{S}}					% Sphere
\newcommand{\T}{\mathbb{T}}						% Torus

					% Big O

				% Julia set
				% Fatou set

\newcommand{\eps}{\varepsilon}					% Epsilon shortcut
\newcommand{\derham}{H_{\text{dR}}}				% De Rham cohomology

\newcommand{\dd}								% Differential d
	{\mathop{}\!\mathrm{d}}						
\newcommand{\ddn}[1]							% Powers of a differential d
	{\mathop{}\!\mathrm{d^{#1}}}

\newcommand{\abs}[1]							% Absolute value
	{\left| #1 \right|}
\newcommand{\smallabs}[1]						% Small absolute value bars which won't scale to the argument.
	{\lvert #1 \rvert}	
\newcommand{\norm}[1]							% Norm 
	{\left\lVert #1 \right\rVert}	
\newcommand{\smallnorm}[1]						% Small norm bars which won't scale to the argument.
	{\lVert #1 \rVert}						
\newcommand{\ip}[2]								% Inner product
	{\left< #1 , #2 \right>}
\newcommand{\smallip}[2]
	{\langle #1 , #2 \rangle}
\newcommand{\dfpart}[1]
	{\left< #1 \right>}

%Operators
					% Identity
					% Projections
				% Interior
\DeclareMathOperator{\vol}{vol}					% Volume (and the volume form)
					% Torsion operator/subgroup
\DeclareMathOperator{\spt}{spt}					% Support
					% Trace
\DeclareMathOperator*{\esssup}{ess\,sup}		% Essential supremum

\DeclareMathOperator{\Span}{span}
					% affine hull

\DeclareMathOperator{\Cl}{Cl}					% Clifford algebra
\DeclareMathOperator{\connsum}{\#}

\DeclareMathOperator{\hodge}{\mathtt{\star}}

\let\Re\relax									% Redefine real and imaginary part operators

\DeclareMathOperator{\Re}{Re}
					% Note: Imaginary part (pair is \Re).
\DeclareMathOperator{\im}{im}					% Note: Image (pair is \ker).

%Other math notation
					% Pushforward (has better spacing in some contexts).

\newcommand{\cesob}{W^\text{CE}}
\newcommand{\cesobs}{W^{\text{CE}\sharp}}
\newcommand{\cesobds}{W^{\text{DS}}}

\newcommand{\cehom}[1]{H_{\text{CE}}^{#1}}
\newcommand{\cehoms}[1]{H_{\text{CE}\sharp}^{#1}}
\newcommand{\cehomds}[1]{H_{\text{DS}}^{#1}}

\newcommand{\pharm}[2]{\mathcal{H}_{#1}^{#2}}

\newcommand{\pharmgeq}[2]{\mathcal{H}_{#1}^{#2, \geq}}

\newcommand{\loc}{\mathrm{loc}}

\newcommand{\cA}{\mathcal{A}}
\newcommand{\cB}{\mathcal{B}}
\newcommand{\cE}{\mathcal{E}}
\newcommand{\cI}{\mathcal{I}}
\newcommand{\cJ}{\mathcal{J}}
\newcommand{\cG}{\mathcal{G}}
\newcommand{\cH}{\mathcal{H}}

\newenvironment{acknowledgments}
	{\bigskip\noindent{\bf Acknowledgments.}}{}

%Theorem environments
\newtheorem{thm}{Theorem}[section]{\bf}{\it}
\newtheorem{lemma}[thm]{Lemma}
\newtheorem{prop}[thm]{Proposition}
\newtheorem{cor}[thm]{Corollary}

{\bf}{\it}

{\bf}{\it}

\theoremstyle{definition}
\newtheorem{defn}[thm]{Definition}

\theoremstyle{remark}
\newtheorem{rem}[thm]{Remark}

%Equations
\numberwithin{equation}{section}

\begin{document}

\title[Conformally formal manifolds and UQR non-ellipticity]{Conformally formal manifolds and the uniformly quasiregular non-ellipticity of $(\mathbb{S}^2 \times \mathbb{S}^2) \operatorname{\#} (\mathbb{S}^2 \times \mathbb{S}^2)$}
\author{Ilmari Kangasniemi}
\address{University of Helsinki, Department of Mathematics and Statistics, P.O. Box 68 (Pietari Kalmin Katu 5), FI-00014, Finland, and Syracuse University, Department of Mathematics, Syracuse, NY 13244, USA}
\email{kikangas 'at' syr.edu}

\thanks{This work was supported by the doctoral program DOMAST of the University of Helsinki, the Academy of Finland project \#297258, and the Simons Semester \emph{Geometry and analysis in function and mapping theory on Euclidean and metric measure spaces} at IMPAN, Warsaw.}
\subjclass[2020]{Primary 30C65; Secondary 37F30, 53C18}
\keywords{Uniformly quasiregular, UQR, conformally formal, geometrically formal, $p$-harmonic form, measurable conformal structure}

\begin{abstract}
	We show that the manifold $(\S^2 \times \S^2) \connsum (\S^2 \times \S^2)$  does not admit a non-constant non-injective uniformly quasiregular self-map. This answers a question of Martin, Mayer, and Peltonen, and provides the first example of a quasiregularly elliptic manifold which is not uniformly quasiregularly elliptic.
	
	To obtain the result, we introduce conformally formal manifolds, which are closed smooth $n$-manifolds $M$ admitting a measurable conformal structure $[g]$ for which the $(n/k)$-harmonic $k$-forms of the structure $[g]$ form an algebra. This is a conformal counterpart to the existing study of geometrically formal manifolds. We show that, similarly as in the geometrically formal theory, the real cohomology ring $H^*(M; \R)$ of a conformally formal $n$-manifold $M$ admits an embedding of algebras $\Phi \colon H^*(M; \R) \hookrightarrow \wedge^* \R^n$. We also show that uniformly quasiregularly elliptic manifolds $M$ are conformally formal in a stronger sense, in which the wedge product is replaced with a conformally scaled Clifford product. For this stronger version of conformal formality, the image of $\Phi$ is closed under the Euclidean Clifford product of $\wedge^* \R^n$, which in turn is impossible for $M = (\S^2 \times \S^2) \connsum (\S^2 \times \S^2)$.
\end{abstract}

\maketitle

\section{Introduction}

\emph{Quasiconformal} and \emph{quasiregular} maps are a geometric generalization of holomorphic maps to higher dimensions. In particular, given two oriented Riemannian $n$-manifolds $M$ and $N$, a map $f \colon M \to N$ is \emph{$K$-quasiregular} for $K \geq 1$ if $f$ is continuous, belongs to the local Sobolev space $W^{1,n}_\loc(M, N)$, and satisfies the infinitesimal distortion condition
\[
	\abs{Df(x)}^n \leq K J_f(x)
\]
for almost every $x \in M$. Here, $\abs{\cdot}$ stands for the operator norm, and $J_f$ for the Jacobian determinant. A $K$-quasiregular homeomorphism is then called \emph{$K$-quasiconformal}. If $n=2$, it is well known that the set of $1$-quasiregular maps consists exactly of holomorphic maps between Riemann surfaces.

Moreover, a self-map $f \colon M \to M$ on an oriented Riemannian $n$-manifold is \emph{uniformly $K$-quasiregular} if every iterate $f^j$ of $f$ is $K$-quasiregular. Uniformly quasiregular maps are thus quasiregular self-maps which behave well under iteration, and they therefore provide a geometric higher dimensional generalization of holomorphic dynamics.

Let $M$ be a closed, connected, oriented Riemannian $n$-manifold. Then $M$ is \emph{quasiregularly elliptic} if there exists a non-constant quasiregular map $f \colon \R^n \to M$. The study of such manifolds traces back to questions of Gromov \cite[p.\ 67]{Gromov_book} and Rickman \cite[p.\ 183]{Rickman_QREllipt_question}. Similarly, we call $M$ \emph{uniformly quasiregularly elliptic} if there exists a non-constant non-injective uniformly quasiregular self-map $f \colon M \to M$. The question of characterizing uniformly quasiregularly elliptic manifolds is described by Martin, Mayer, and Peltonen \cite{Martin-Mayer-Peltonen} as a non-injective version of a conjecture by Lichnerowicz \cite{Lichnerowicz_conjecture} solved by Lelong-Ferrand \cite{Lelong-Ferrand_LichneowiczSolution}.

The two concepts are related in that if $M$ is uniformly quasiregularly elliptic, then it is also quasiregularly elliptic; see \cite[Theorem 1.1]{Martin-Mayer-Peltonen} or \cite[Theorem 5.7]{Kangaslampi_thesis}. Whether there exists a converse implication or not has however so far remained unsolved. When $n = 2$, it follows from the uniformization theorem that quasiregular and uniformly quasiregular ellipticity are equivalent. Moreover, it was shown by Kangaslampi \cite[Theorem 7.1]{Kangaslampi_thesis} that quasiregular and uniformly quasiregular ellipticity remain equivalent when $n = 3$.

As our main result, we show that for $n = 4$,  quasiregular and uniformly quasiregular ellipticity are in fact not equivalent. In particular, let $M$ be the connected sum $(\S^2 \times \S^2) \connsum (\S^2 \times \S^2)$ of two copies of $\S^2 \times \S^2$. It was shown by Rickman \cite{Rickman_ConnSumQREllipt} that $M$ is quasiregularly elliptic. Due to this, it was asked by Martin, Mayer, and Peltonen in \cite[p.\ 2093]{Martin-Mayer-Peltonen} whether $M$ is uniformly quasiregularly elliptic; see also \cite[p.\ 338]{Astola-Kangaslampi-Peltonen}, and \cite[p.\ 1442]{Martin-ICM2010Survey}. We resolve this question to the negative.

\begin{thm}\label{thm:uqr_ellipticity_is_stronger}
	The manifold $(\S^2 \times \S^2) \connsum (\S^2 \times \S^2)$ is not uniformly quasiregularly elliptic.
\end{thm}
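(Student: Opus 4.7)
The plan is to argue by contradiction. Suppose $M = (\S^2 \times \S^2) \connsum (\S^2 \times \S^2)$ admits a non-constant non-injective uniformly quasiregular self-map. Then, by the stronger (Clifford) version of conformal formality promised in the abstract for UQR elliptic manifolds, there exists an embedding of graded algebras
\[
	\Phi \colon H^*(M; \R) \hookrightarrow \wedge^* \R^4,
\]
with respect to cup and wedge products, whose image is additionally closed under the Euclidean Clifford product on $\wedge^* \R^4$. I would aim to derive a contradiction by showing that this combination of properties is incompatible with the cohomology ring of $M$.

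The key object would be the image $V = \Phi(H^2(M; \R)) \subset \wedge^2 \R^4$, a $4$-dimensional subspace of a $6$-dimensional ambient space. Two structural constraints apply. First, since $\Phi$ is a graded algebra embedding and $H^4(M; \R) \hookrightarrow \wedge^4 \R^4 \cong \R$, the wedge pairing $V \times V \to \wedge^4 \R^4$ matches the cup-product pairing on $H^2(M; \R)$. For our $M$ the latter is the intersection form $H \oplus H$ of two hyperbolic planes, of signature $(2,2)$. Second, since the degree-$2$ component of the Clifford product of two $2$-forms is antisymmetric in its arguments (while its degree-$0$ and degree-$4$ components are symmetric), Clifford closure of $\Phi(H^*(M; \R))$ forces $V$ itself to be closed under the Clifford commutator $[a, b] = a \cdot b - b \cdot a$ on $\wedge^2 \R^4$.

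The next step is to exploit the Lie-algebra structure on $(\wedge^2 \R^4, [\cdot, \cdot])$. Under the self-dual / anti-self-dual decomposition $\wedge^2 \R^4 = \wedge^2_+ \oplus \wedge^2_-$, this Lie algebra is isomorphic to $\mathfrak{so}(4) = \mathfrak{so}(3) \oplus \mathfrak{so}(3)$. A direct Goursat-lemma analysis, using that $\mathfrak{so}(3)$ is simple and has no $2$-dimensional Lie subalgebras, shows that every $4$-dimensional Lie subalgebra of $\mathfrak{so}(3) \oplus \mathfrak{so}(3)$ is of the form $\mathfrak{so}(3)_+ \oplus L_-$ or $L_+ \oplus \mathfrak{so}(3)_-$ for some $1$-dimensional $L_\pm$. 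However, the wedge pairing on $\wedge^2 \R^4$ is positive definite on $\wedge^2_+$, negative definite on $\wedge^2_-$, and vanishes across the decomposition, so every such $V$ has wedge signature $(3,1)$ or $(1,3)$. Neither equals $(2,2)$, contradicting the first constraint.

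The main obstacle I expect is not this signature and Lie-subalgebra finish, which is essentially linear algebra, but rather the upstream task of promoting a uniformly quasiregular self-map of $M$ into the Clifford-closed embedding $\Phi$. That is precisely what the conformally formal theory developed in the body of the paper is meant to supply, via passing from UQR iteration to an invariant measurable conformal structure, its $(n/k)$-harmonic $k$-forms, and the resulting pointwise evaluation into $\wedge^* \R^n$. Once that machinery is in place, the specific obstruction for $(\S^2 \times \S^2) \connsum (\S^2 \times \S^2)$ crystallizes into the observation above: its intersection form of signature $(2,2)$ cannot be realized by any $4$-dimensional Lie subalgebra of $\mathfrak{so}(4)$ under the wedge pairing.
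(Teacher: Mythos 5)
Your proposal is correct, and the upstream reductions you invoke (uniformly quasiregular ellipticity implies conformal formality in the Clifford sense, which in turn yields a Clifford-closed graded embedding $\Phi \colon H^*(M; \R) \hookrightarrow \wedge^* \R^4$) are exactly Theorems \ref{thm:uqr_elliptic_is_clifford} and \ref{thm:conf_formal_clifford_embedding} of the paper. Where you depart is the endgame. The paper first uses the $\hodge$-closedness of $\im \Phi$ to split $\Lambda_2 = \Phi(H^2)$ into its self-dual and anti-self-dual parts $\Lambda_2^\pm$, then shows (Lemma \ref{lem:betti_result_product}) that the degree-$2$ Clifford product $P$ preserves each part separately, identifies $P$ on $\wedge^2_\pm \cong \R^3$ with the cross product, and uses that $\R^3$ has no $2$-dimensional subspace closed under the cross product; combined with Corollary \ref{cor:conf_formal_4m_obstruction} this gives $b_2^\pm \in \{0, 1, 3\}$ for \emph{all} $4$-manifolds that are conformally formal in the Clifford sense, and then the specific $M$ is excluded because $b_2^+ = b_2^- = 2$. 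You instead treat $\Lambda_2$ as a whole: closure of $\im \Phi$ under the Clifford product together with its being a graded subspace forces $\Lambda_2$ to be closed under the antisymmetric degree-$2$ part of the Clifford product, i.e.\ the Lie bracket of $\mathfrak{so}(4) \cong \mathfrak{so}(3) \oplus \mathfrak{so}(3)$, and then a Goursat-type classification shows that every $4$-dimensional Lie subalgebra is $\mathfrak{so}(3)_+ \oplus L_-$ or $L_+ \oplus \mathfrak{so}(3)_-$, hence of wedge signature $(3,1)$ or $(1,3)$ rather than the $(2,2)$ required by the intersection form of $(\S^2 \times \S^2) \connsum (\S^2 \times \S^2)$. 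The two arguments rest on the same algebraic fact --- $\mathfrak{so}(3)$ has no $2$-dimensional subalgebra --- but are packaged differently. Your version is arguably more self-contained in that it never explicitly invokes the $\hodge$-closedness of $\im \Phi$ (the Goursat classification delivers the split form of $\Lambda_2$ automatically), while the paper's version, by arguing componentwise in $\Lambda_2^\pm$, yields the sharper general constraint $b_2^\pm \in \{0, 1, 3\}$ stated in Theorem \ref{thm:4_manifold_uqr_obstruction}, which your argument as written addresses only in the case $b_2 = 4$.
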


In a slightly more general form, the concrete topological obstruction for uniformly quasiregular ellipticity which results in Theorem \ref{thm:uqr_ellipticity_is_stronger} is as follows.

\begin{thm}\label{thm:4_manifold_uqr_obstruction}
	Suppose that $M$ is a closed, connected, oriented, and uniformly quasiregularly elliptic 4-manifold. Let $b_2 = b_2^+ + b_2^-$ be the standard decomposition of the second Betti number of $M$ into positive definite and negative definite parts under the intersection form. Then $b_2^+, b_2^- \in \{0, 1, 3\}$.
\end{thm}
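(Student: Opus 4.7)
The plan is to apply the paper's main structural result: since $M$ is uniformly quasiregularly elliptic, there exists an algebra embedding $\Phi \colon H^*(M; \R) \hookrightarrow \wedge^* \R^4$ whose image is additionally closed under the Euclidean Clifford product on $\wedge^* \R^4$. Set $V := \Phi(H^2(M; \R)) \subset \wedge^2 \R^4$. Because $M$ is closed, connected, and oriented, $\Phi(H^0) = \R \cdot 1$ and $\Phi(H^4) = \wedge^4 \R^4$ automatically, so the Clifford-closure condition forces the $\wedge^2$-component of $\alpha \cdot \beta$ to lie in $V$ for every $\alpha, \beta \in V$.

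For $\alpha, \beta \in \wedge^2 \R^4$, the Clifford product decomposes as $\alpha \cdot \beta = \sigma(\alpha, \beta) + \mu(\alpha, \beta) + \alpha \wedge \beta$ with components in $\wedge^0$, $\wedge^2$, and $\wedge^4$. A direct check shows that the $\wedge^0$- and $\wedge^4$-components are symmetric in $\alpha, \beta$, so the middle term satisfies $\mu(\alpha, \beta) = \tfrac{1}{2}(\alpha \cdot \beta - \beta \cdot \alpha)$ and is the Clifford commutator. Under the standard identification $\wedge^2 \R^4 \cong \mathfrak{so}(4)$, this commutator coincides (up to a fixed nonzero constant) with the Lie bracket on $\mathfrak{so}(4)$. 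Therefore $V$ is a Lie subalgebra of $\mathfrak{so}(4) \cong \mathfrak{su}(2)_+ \oplus \mathfrak{su}(2)_-$, where the two $\mathfrak{su}(2)$-summands correspond to the self-dual and anti-self-dual components $\wedge^+$ and $\wedge^-$ of $\wedge^2 \R^4$, on which the intersection form $\alpha \wedge \beta$ is positive definite and negative definite respectively.

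The core of the argument is then a Goursat-type classification. Since $\mathfrak{su}(2)$ is simple with Lie subalgebras only of dimensions $0, 1, 3$ (in particular, no $2$-dimensional subalgebras) and ideals only $\{0\}$ and $\mathfrak{su}(2)$ itself, every Lie subalgebra $V \subset \mathfrak{su}(2)_+ \oplus \mathfrak{su}(2)_-$ falls into one of two types: (i) a direct sum $V = N_+ \oplus N_-$ with each $N_\pm \subset \mathfrak{su}(2)_\pm$ a Lie subalgebra of dimension in $\{0, 1, 3\}$, or (ii) the graph $\{(x, \phi(x))\}$ of a Lie isomorphism $\phi$ between subalgebras of $\mathfrak{su}(2)_+$ and $\mathfrak{su}(2)_-$ of common dimension $1$ or $3$, with $V \cap \mathfrak{su}(2)_\pm = 0$. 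In case (i) the intersection form splits as a direct sum of definite forms of signature $(\dim N_+, \dim N_-) \in \{0,1,3\}^2$. In case (ii) the uniqueness up to scaling of bi-invariant inner products on a simple Lie algebra forces $\abs{\phi(x)}^2 = c \abs{x}^2$ for some $c > 0$, so the restriction of the intersection form is $(1-c)\abs{x}^2$; Poincaré duality rules out $c = 1$, leaving signature $(\dim V, 0)$ or $(0, \dim V)$ with $\dim V \in \{1, 3\}$. In every possibility $b_2^+, b_2^- \in \{0, 1, 3\}$.

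I expect the main technical obstacle to be the diagonal case (ii) in dimension $3$: one must carefully pin down the signature of the induced form on a graph subalgebra using the scaling property of Lie isomorphisms of simple Lie algebras, and exclude the degenerate possibility via Poincaré duality. The remaining work—translating Clifford closure into the Lie-subalgebra condition and performing the Goursat bookkeeping—is essentially routine once the above is in place.
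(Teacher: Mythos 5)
Your proof is correct, but it takes a genuinely different route than the paper's. You reduce to the statement that $V = \Phi(H^2(M;\R))$ is a Lie subalgebra of $\wedge^2\R^4 \cong \mathfrak{so}(4)$ under the Clifford commutator, and then carry out a Goursat-style classification of Lie subalgebras of $\mathfrak{su}(2)_+ \oplus \mathfrak{su}(2)_-$, handling both a product case and a graph case. The paper instead uses a shortcut that eliminates the graph case from the outset: Clifford closure forces $V$ to be closed under the Hodge star, because $\hodge$ on $\wedge^2\R^4$ is left Clifford multiplication by $\pm e_{1234}$, and $e_{1234} \in \Phi(H^4(M;\R))$. Hence $V$ splits orthogonally as $V = V^+ \oplus V^-$ into self-dual and anti-self-dual parts; the bilinear map $P(v,v') = \dfpart{v \cdot v'}_2$ preserves each summand, and a short multiplication table shows that $P$ on each $3$-dimensional eigenspace of $\hodge$ is exactly the cross product on $\R^3$. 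Since $\R^3$ has no $2$-dimensional subspace closed under the cross product, $\dim V^\pm \neq 2$; combined with $\dim V^\pm \leq 3$ this gives $b_2^\pm \in \{0,1,3\}$. Your Lie-theoretic phrasing is conceptually illuminating and essentially equivalent to the paper on the product case, but the graph case you handle --- which requires the uniqueness of Ad-invariant inner products on $\mathfrak{su}(2)$ together with non-degeneracy of the intersection form from Poincar\'e duality --- becomes vacuous once Hodge closure is invoked, which is why the paper's argument is shorter. (As a minor internal check on your argument: the uniqueness-of-bi-invariant-products step applies only to the $3$-dimensional graph; the $1$-dimensional graph case works instead because any linear isomorphism of lines is a scalar, so you should present those two subcases separately rather than as one appeal to simplicity.)
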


In particular, for $M = (\S^2 \times \S^2) \connsum (\S^2 \times \S^2)$ we have $b_2^+ = b_2^- = 2$, and therefore the uniformly quasiregular ellipticity of $M$ is ruled out by Theorem \ref{thm:4_manifold_uqr_obstruction}. We note that each of the individual values of $b_2^+$ and $b_2^-$ permitted by Theorem \ref{thm:4_manifold_uqr_obstruction} is assumed by a manifold known to be uniformly quasiregularly elliptic. Indeed, for $\S^4$ we have $b_2^+ = b_2^- = 0$, for $\S^2 \times \S^2$ we have $b_2^+ = b_2^- = 1$, and for $\T^4$ we have $b_2^+ = b_2^- = 3$. For a proof of the uniformly quasiregular ellipticity of these manifolds, see e.g.\ \cite{Astola-Kangaslampi-Peltonen}.

\subsection{Conformally formal manifolds}

In our approach towards Theorems \ref{thm:uqr_ellipticity_is_stronger} and \ref{thm:4_manifold_uqr_obstruction}, we develop a significant amount of more general obstruction theory, which is of independent interest. We now proceed to present the main points of this theory.

Let $M$ be a closed smooth $n$-manifold. A smooth Riemannian metric $g$ on $M$ is called \emph{formal} if its space of harmonic forms is closed under the wedge product, and therefore forms an algebra. Recall that a smooth $k$-form $\omega \in C^\infty(\wedge^k M)$ is \emph{harmonic} if $(d^* d + d d^*)\omega = 0$, where the dependence of harmonic forms on the choice of metric $g$ is through the codifferential operator $d^*$. Hodge theory yields that if $M$ is closed, then $\omega \in C^\infty(\wedge^k M)$ is harmonic if and only if it satisfies the pair of partial differential equations
\begin{align*}
	d\omega &= 0,\\
	d^* \omega &= 0.
\end{align*}

The $n$-manifold $M$ is then called \emph{geometrically formal} if it admits a smooth formal Riemannian metric. This definition is due to Kotschick \cite{Kotschick_duke}, following the ideas of Sullivan \cite{Sullivan_formal_metrics}. Geometrically formal manifolds and their topological obstructions have been studied in e.g.\ \cite{Kotschick_duke},  \cite{Grojsean-Nagy_GeometricallyFormal}, \cite{Bar_FormalMetricsWithPosCurvature} and \cite{Kotschick_FormalNonnegScalarCurvature}.

In the theory leading up to our main results, we end up considering a conformal version of formal Riemannian metrics. Namely, suppose that $[g]$ is a bounded measurable conformal structure on $M$. Then for every index $k \in \{1, \dots, n-1\}$, the structure $[g]$ defines a conformally invariant space $\pharm{g}{k}(M; \R)$ of \emph{$(n/k)$-harmonic $k$-forms}. In particular, elements $\omega \in \pharm{g}{k}(M; \R)$ are weak solutions of the pair of conformally invariant differential equations
\begin{gather}
	\label{eq:diffeq_d} d\omega = 0,\\
	\label{eq:diffeq_dstar} d \abs{\omega}_g^{\frac{n-k}{k} - 1} \hodge_g \omega = 0,
\end{gather}
where $g$ is any measurable Riemannian metric contained in the structure $[g]$, and $\hodge_g$ is the Hodge star with respect to $g$. The elements of $\pharm{g}{k}(M; \R)$ are also studied under the more general class of \emph{$\cA$-harmonic forms}, where in this case $\cA$ is a non-linear operator depending on the conformal structure $[g]$.

The definition of $\pharm{g}{k}(M; \R)$ extends to $k = 0$ by having $\pharm{g}{0}(M; \R)$ be the set of constant functions, which are exactly the weak solutions of \eqref{eq:diffeq_d} among 0-forms. The extension to $k = n$ is more subtle. The condition \eqref{eq:diffeq_dstar} for an $n$-form $\omega$ is essentially a condition for $\hodge_g \omega$ to not change sign. We interpret this condition by defining a conformally invariant subspace $\pharmgeq{g}{n}(M; \R)$ of the space of measurable $n$-forms by
\begin{align}
	\pharmgeq{g}{n}(M; \R) &= \left\{ \omega \text{ meas.\ } n \text{-form} :
	\hodge_g \omega \geq 0, \text{ or }
	\hodge_g \omega \leq 0 \right\}\label{eq:pharmgeq},
\end{align}
where the inequalities are understood to hold almost everywhere.

With these definitions in place, we show the following.

\begin{thm}\label{thm:uqr_elliptic_is_formal}
	Let $M$ be a closed, connected, oriented Riemannian $n$-mani\-fold for $n \geq 2$. Suppose that $M$ is uniformly quasiregularly elliptic. Then there exists a bounded conformal structure $[g]$ on $M$ which satisfies the following property: there exists a space $\pharm{g}{n}(M; \R) \subset \pharmgeq{g}{n}(M; \R)$ such that $\pharm{g}{*}(M; \R) = \bigoplus_{j=0}^n \pharm{g}{j}(M; \R)$ is a graded $\R$-algebra with respect to $+$ and $\wedge$.
\end{thm}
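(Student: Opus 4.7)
The plan is to derive both the conformal structure $[g]$ and the wedge-closed structure on $\pharm{g}{*}(M;\R)$ from the dynamics of a non-constant non-injective uqr self-map $f \colon M \to M$; openness and discreteness of uqr maps give $d := \deg f \geq 2$. Existence of a bounded measurable conformal structure $[g]$ with $f^*[g] = [g]$ is classical uqr theory (Iwaniec--Martin in general dimensions; Tukia/Hinkkanen for $n = 2$). Since the defining equations \eqref{eq:diffeq_d}--\eqref{eq:diffeq_dstar} are conformally invariant, $f^*$ restricts to a well-defined endomorphism of each finite-dimensional space $\pharm{g}{k}(M;\R)$, where finite-dimensionality comes from a non-linear Hodge-type isomorphism with $H^k(M;\R)$.

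The bulk of the work is a spectral argument. Conformal invariance of the $L^{n/k}$-energy $\omega \mapsto (\int_M \abs{\omega}_g^{n/k})^{k/n}$, combined with the quasiregular distortion inequality applied in the $f$-invariant structure $[g]$, should pin the spectral radius of $f^*|_{\pharm{g}{k}}$ to exactly $d^{k/n}$, the upper bound coming from the $n/k$-norm estimate $\norm{f^*\omega}_{n/k} \leq (Kd)^{k/n}\norm{\omega}_{n/k}$ refined by invariance, and the lower bound coming from Poincar\'e duality paired with the fact that $f^*$ acts on $H^n(M;\R)=\R$ as multiplication by $d$. Given $\alpha \in \pharm{g}{k}$ and $\beta \in \pharm{g}{\ell}$ with $k + \ell < n$, the wedge $\alpha \wedge \beta$ is closed and its iterates $f^{j*}(\alpha \wedge \beta) = f^{j*}\alpha \wedge f^{j*}\beta$ grow at the rate $d^{j(k+\ell)/n}$, matching the spectral radius on $\pharm{g}{k+\ell}$. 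A Perron--Frobenius argument combined with uniqueness of the $(n/(k+\ell))$-harmonic representative within each cohomology class then identifies $\alpha \wedge \beta$ with the harmonic representative of $[\alpha] \cup [\beta]$, placing $\alpha \wedge \beta$ inside $\pharm{g}{k+\ell}$.

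For $k + \ell = n$, I would define $\pharm{g}{n}(M;\R)$ as the $\R$-span of all top-degree wedge products $\alpha \wedge \beta$ with $\alpha \in \pharm{g}{k}$ and $\beta \in \pharm{g}{n-k}$. A direct check shows that any linear subspace of $\pharmgeq{g}{n}(M;\R)$ is at most one-dimensional: two linearly independent measurable sign-definite $n$-forms $\nu_1,\nu_2$ with bounded ratio $m \leq \nu_1/\nu_2 \leq M$ span a plane containing $\nu_1 - c\nu_2$ for $c \in (m,M)$, which changes sign unless $m = M$. Thus the critical step, and the main obstacle I anticipate, is to show that the wedge products are pointwise proportional and that their common line is sign-definite, a rigid pointwise assertion well beyond the cohomological statement $[\alpha] \cup [\beta] \in H^n(M;\R) = \R$. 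The plan here is to exhibit the required sign-definite generator as a normalized dynamical limit of iterates $d^{-j} f^{j*} \omega$ for a suitable positive initial $n$-form $\omega$ (the analogue of an equilibrium measure or Perron eigenvector for $f$), and use the matching of spectral radii in degrees $k$ and $n-k$ to force every top-degree wedge product onto this single $f^*$-invariant line.
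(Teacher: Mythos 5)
Your overall direction matches the paper: pass to the Iwaniec--Martin invariant conformal structure $[g_f]$, note that $f^*$ acts on $\pharm{g_f}{k}$, exploit conformal invariance of the $L^{n/k}$-norm and the $d^{k/n}$-modulus of eigenvalues, and realize $\pharm{g_f}{n}$ via the dynamical invariant measure $\mu_f$. Your one-dimensionality argument for subspaces of $\pharmgeq{g}{n}$ is also essentially the paper's Lemma~\ref{lem:one_sign_vector_spaces}. But two substantial gaps remain.

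First, and most seriously, the proposal never establishes that $\pharm{g_f}{k}(M;\R)$ is \emph{closed under addition} for $0 < k < n$. The theorem asks for a graded $\R$-algebra, and linearity is precisely as nontrivial as wedge-closure because the defining equation \eqref{eq:diffeq_dstar} is nonlinear: a sum of two $(n/k)$-harmonic forms need not be $(n/k)$-harmonic. You implicitly treat $\pharm{g_f}{k}$ as a finite-dimensional vector space (``$f^*$ restricts to a well-defined endomorphism of each finite-dimensional space $\pharm{g}{k}$''), but the bijection $\pharm{g_f}{k}\to \cehoms{k}(M;\R)$ of Lemma~\ref{lem:cohom_repr} is only a bijection of sets a priori, not an isomorphism of vector spaces. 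The paper devotes all of Lemma~\ref{lem:closed_under_addition} to this: one passes to complex coefficients (to diagonalize $f^*$ on $\cehoms{k}(M;\C)$), forms the auxiliary linear space $\cE^k_f(M;\C)$ spanned by the harmonic representatives of eigenvector classes, and then shows $\cE^k_f(M;\C) = \pharm{g_f}{k}(M;\C)$ by combining the weak-convergence Lemma~\ref{lem:weak_convergence_to_zero} with a quantitative continuity estimate (Theorem~\ref{thm:ISS_continuity_complex}) for the nonlinear projection $\eta\mapsto\omega$ onto $p$-harmonic representatives. Nothing in your sketch replaces this step, and the growth-rate argument you describe cannot, since $\alpha+\beta$ and the harmonic representative of $[\alpha]+[\beta]$ have identical norm growth under $F = d^{-k/n}f^*$.

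Second, the wedge-closure argument is too loose to be called a proof strategy yet. For general $\alpha\in\pharm{g_f}{k}$, $\beta\in\pharm{g_f}{\ell}$ (not $f^*$-eigenvectors), the statement that the iterates $f^{j*}(\alpha\wedge\beta)$ ``grow at rate $d^{j(k+\ell)/n}$, matching the spectral radius'' is true of \emph{every} bounded representative of the same cohomology class, harmonic or not, so by itself it does not ``identify $\alpha\wedge\beta$ with the harmonic representative of $[\alpha]\cup[\beta]$.'' The mechanism the paper actually uses (Lemmas~\ref{lem:eigenvectors_are_harmonic} and \ref{lem:closed_under_wedges}) is rigid: show the (scaled Clifford/wedge) product of two $f^*$-eigenvectors is again an $f^*$-eigenvector whose modulus is forced to be $d^{(k+\ell)/n}$, then pass it through the conformal Hodge decomposition (Proposition~\ref{prop:nonlinear_Hodge_decomposition}), which respects $f^*$ (Lemma~\ref{lem:conf_hodge_decomposition_qr}); the exact and coexact pieces inherit the eigenvalue, and weak convergence to zero kills them. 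Extending from the eigenvector basis to general elements then \emph{requires} the linearity established in step one, via Lemma~\ref{lem:clifford_nonlinear_basis_lemma}. Without linearity, closure of a generating set under $\wedge$ gives nothing. So the two gaps compound: the missing linearity also blocks the completion of the wedge-closure argument you sketched.
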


Due to how the above condition for $[g]$ resembles the definition of a formal Riemannian metric, we call such a structure $[g]$ \emph{conformally formal}. Similarly, a closed, oriented, smooth Riemannian $n$-manifold $M$ admitting such a structure $[g]$ is also called \emph{conformally formal}. This definition makes no reference to a uniformly quasiregular map, and can therefore be studied independently of the uniformly quasiregularly elliptic setting. 

In a certain sense the definition of conformally formal structures appears even more demanding than that of formal Riemannian metrics. Not only does the definition include that $\pharm{g}{*}(M; \R)$ is closed under the wedge product, but it also requires that the spaces $\pharm{g}{k}(M; \R)$ are linear despite the non-linearity of the differential equation \eqref{eq:diffeq_dstar}. Regardless, the move from the smooth setting to the measurable setting and the challenges surrounding the case $k = n$ interfere with the use of many arguments from the geometrically formal theory.

However, some of the methodology does survive the change in setting. We recall that a conjecture of Bonk and Heino\-nen \cite[p.\ 222]{Bonk-Heinonen_Acta} on quasiregularly elliptic manifolds was recently given a positive solution by Prywes \cite{Prywes_Annals}. The conjecture predicted a sharp upper bound of $2^n$ on the dimension of the real cohomology ring of a quasiregularly elliptic $n$-manifold. Besided Prywes' proof, an alternate proof of the uniformly quasiregularly elliptic special case was also given by the author in \cite{Kangasniemi_CohomBound}. Most notably, the same bound also holds in the setting of geometrically formal manifolds by \cite[Theorem 6]{Kotschick_duke}, and the proofs for uniformly quasiregularly elliptic manifolds and for geometrically formal manifolds bear some similarity.

In particular, the proof of the main cohomological obstruction of \cite{Kangasniemi_CohomBound} transfers entirely to the setting of conformally formal manifolds. Moreover, we in fact obtain a stronger version of the result by taking advantage of the wedge product properties of conformally formal structures.

\begin{thm}\label{thm:conf_formal_algebra_embedding}
	Let $M$ be a closed, connected, oriented, smooth $n$-manifold. Suppose that $M$ is conformally formal. Then there exists an embedding of graded algebras $\Phi \colon H^*(M; \R) \to \wedge^* \R^n$ which maps the cup product to the exterior product. Moreover, the image of $\Phi$ is closed under the Hodge star.
\end{thm}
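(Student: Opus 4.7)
The plan is to realize $\Phi$ as pointwise evaluation of $(n/k)$-harmonic representatives at a carefully chosen point $x_0 \in M$. First, I would identify $\pharm{g}{*}(M; \R)$ with $H^*(M; \R)$ as graded $\R$-algebras: by standard existence and uniqueness of $(n/k)$-harmonic minimizers in each de Rham class (direct methods in $L^{n/k}$, strict convexity of $t^{n/k}$ for $0 < k < n$), the class map $\pharm{g}{k}(M; \R) \to H^k(M; \R)$ is a bijection for each $k$; conformal formality supplies linearity and wedge-closure of $\pharm{g}{*}$, and since wedge of closed forms descends to cup product, the class map is an algebra isomorphism.

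Next I would pin down $\pharm{g}{n}$. As a linear subspace of $\pharmgeq{g}{n}$, an elementary sign-rigidity argument forces it to be at most one-dimensional: for two nonzero $\omega_i = f_i \vol_g$ with $f_i \geq 0$, the condition that every combination $a f_1 + b f_2$ has constant sign forces $f_1, f_2$ to be proportional. Matching $\dim H^n(M;\R) = 1$, write $\pharm{g}{n} = \R \eta$ with $\hodge_g \eta \geq 0$ and $\int_M \eta = 1$. Then for $\alpha \in \pharm{g}{k}$, $\beta \in \pharm{g}{n-k}$ one has $\alpha \wedge \beta = B(\alpha, \beta) \eta$, where $B(\alpha, \beta) := \int_M \alpha \wedge \beta$ is the Poincar\'e pairing, non-degenerate by Poincar\'e duality. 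Now choose a common regular point $x_0 \in M$ at which pointwise evaluation of a basis of $\pharm{g}{*}$ is well-defined and $\eta(x_0) \neq 0$, and fix a $g(x_0)$-orthonormal frame to identify $\wedge^* T^*_{x_0} M \cong \wedge^* \R^n$. Define $\Phi$ by $\alpha \mapsto \alpha(x_0)$. Injectivity is immediate: for $0 \neq \alpha \in \pharm{g}{k}$, pick $\beta$ with $B(\alpha, \beta) \neq 0$; then $\alpha(x_0) \wedge \beta(x_0) = B(\alpha, \beta) \eta(x_0) \neq 0$, so $\alpha(x_0) \neq 0$. The algebra property $\Phi(\alpha \wedge \beta) = \Phi(\alpha) \wedge \Phi(\beta)$ is automatic from evaluating wedge products pointwise.

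For closure of $\im \Phi$ under the Hodge star, given $\alpha \in \pharm{g}{k}$ I would consider the conformal dual $\beta_\alpha := |\alpha|_g^{(n-k)/k - 1} \hodge_g \alpha$. Equation \eqref{eq:diffeq_dstar} states exactly that $d\beta_\alpha = 0$, and a direct computation using $\hodge_g^2 = (-1)^{k(n-k)} \id$ and $|\hodge_g \alpha|_g = |\alpha|_g$ reduces the second $(n/(n-k))$-harmonic equation for $\beta_\alpha$ to \eqref{eq:diffeq_d}, placing $\beta_\alpha \in \pharm{g}{n-k}$. At $x_0$, $\beta_\alpha(x_0) = |\alpha(x_0)|^{(n-k)/k - 1} \hodge_{g(x_0)} \alpha(x_0)$, which under the identifications is a positive multiple of $\hodge \Phi(\alpha)$ whenever $\alpha \neq 0$ (for then $\alpha(x_0) \neq 0$ by injectivity). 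Hence $\hodge \Phi(\alpha) \in \im \Phi$; the edge cases $k = 0$ and $k = n$ pair $1$ with $\eta$ and are equally immediate.

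The principal technical obstacle is rigorous pointwise evaluation of the weak solutions: elements of $\pharm{g}{k}$ are a priori only measurable forms in a Sobolev-type space, so one must employ the regularity theory for $\mathcal{A}$-harmonic forms in a bounded measurable conformal structure to secure a single base point $x_0$ at which all the relevant forms (including the non-linearly constructed duals $\beta_\alpha$ for a basis of $\pharm{g}{k}$) admit consistent pointwise values and the identities $\alpha \wedge \beta = B(\alpha, \beta) \eta$ and $\beta_\alpha = |\alpha|_g^{(n-k)/k - 1} \hodge_g \alpha$ hold in that pointwise sense. This machinery builds on the author's earlier pointwise-evaluation framework in \cite{Kangasniemi_CohomBound}, now refined to exploit the wedge-algebra structure supplied by conformal formality.
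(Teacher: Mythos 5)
Your overall architecture matches the paper's: identify $\pharm{g}{*}(M;\R)$ with $H^*(M;\R)$ through cohomology representation and realize $\Phi$ as evaluation of a finite basis of $\pharm{g}{*}(M;\R)$ at a carefully chosen base point, so the proposal is essentially correct. Your injectivity argument, however, is genuinely different from the paper's and is arguably cleaner. The paper invokes its rigidity result (Lemma~\ref{lem:abs_value_rigidity}) to manufacture a constant inner product $\ip{\cdot}{\cdot}_{\cH}$ on each $\pharm{g}{k}(M;\R)$, picks an orthonormal basis, and chooses $x$ so that the basis evaluates to nonzero pairwise-orthogonal elements. You instead observe that wedge-closure and $\dim\pharm{g}{n}(M;\R)\leq 1$ give $\alpha \wedge \beta = B(\alpha,\beta)\eta$ for the Poincar\'e pairing $B$, whose non-degeneracy forces $\alpha(x_0)\neq 0$ whenever $\eta(x_0)\neq 0$; this bypasses the rigidity lemma entirely for the purposes of this theorem (the paper still needs it for Lemma~\ref{lem:clifford_nonlinear_basis_lemma} and Theorem~\ref{thm:conf_formal_clifford_embedding}). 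Two clarifications are worth recording. First, the mechanism for securing the base point is not ``regularity theory'' for $\mathcal{A}$-harmonic forms---a bounded \emph{measurable} conformal structure affords no usable pointwise regularity. The correct, and purely measure-theoretic, mechanism (which is what the paper actually does) is to fix a finite graded basis, note that each of the finitely many required identities (non-vanishing, the wedge-product identities, and the conformal Hodge-duality identities for basis elements) holds a.e., and intersect the finitely many full-measure validity sets with a positive-measure set where $\eta\neq 0$. Second, you should treat separately the degenerate case $H^k(M;\R)=0$ for all $0<k<n$, where $\pharm{g}{n}(M;\R)$ may be trivial and your $\eta$ unavailable; there $H^*(M;\R)\cong H^*(\S^n;\R)$ and an embedding with the required properties can be written down directly.
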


Theorem \ref{thm:conf_formal_algebra_embedding} already reveals some new manifolds which are not uniformly quasiregularly elliptic. For an example of this, let $M = \connsum^{15} (\S^2 \times \S^4)$. This manifold $M$ still satisfies the cohomological dimension restriction proven in \cite{Kangasniemi_CohomBound} and \cite{Prywes_Annals}, but $H^*(M; \R)$ can not be realized as a subalgebra of $\wedge^* \R^6$. Indeed, any such realization has to map $H^2(M; \R)$ surjectively onto $\wedge^2 \R^6$, but the cup product of any two elements of $H^2(M; \R)$ is zero.

%However, Theorem \ref{thm:conf_formal_algebra_embedding} is not sufficient to rule out the uniformly quasiregular ellpiticity of $M = (\S^2 \times \S^2) \connsum (\S^2 \times \S^2)$, since in this case there in fact exists a graded embedding of algebras $H^*(M; \R) \to \wedge^* \R^4$. Hence, for our main result, we require a stronger obstruction.

\subsection{Clifford algebra}

Theorem \ref{thm:4_manifold_uqr_obstruction} follows from a refined version of the previously discussed obstruction theory, obtained by an application of Clifford algebras. We note that, to our knowledge, Clifford algebras have not seen significant prior use in the study of higher dimensional quasiconformal analysis. In our case, we use a measurable Riemannian metric $g$ to obtain a Clifford product $\cdot_g$ on $\wedge^* T^*_x M$ for almost every $x \in M$. Consequently, we may multiply elements of $\pharm{g}{*}(M; \R)$ with each other under this Clifford product.

However, the Clifford product $\cdot_g$ is dependent on the exact choice of metric $g$ in a conformal structure $[g]$. To eliminate this dependence, we instead study a scaled version of the Clifford product. Namely, suppose that $k, l, m \in \{1, \dots, n\}$, and let $\omega_1$ and $\omega_2$ be a measurable $l$-form and $m$-form, respectively. We use $\dfpart{\omega}_k$ to denote the $(\wedge^k T^*M)$-component of a differential form $\omega \colon M \to \wedge^* T^* M$, and define an operation $\odot_g$ by
\[
	\dfpart{\omega_1 \odot_g \omega_2}_k 
	= \abs{\dfpart{\omega_1 \cdot_g \omega_2}_k}_g^{\frac{k}{l+m} - 1} \dfpart{\omega_1 \cdot_g \omega_2}_k.
\]
This operation $\odot_g$ is non-linear, but in exchange depends only on the conformal structure $[g]$. Moreover, $\dfpart{\omega_1 \odot_g \omega_2}_k = \omega_1 \wedge \omega_2$ if $k = l + m$. A more precise definition of $\odot_g$ is given in Section \ref{sect:Clifford_alg}.

We then proceed to improve Theorem \ref{thm:uqr_elliptic_is_formal} in the following way.

\begin{thm}\label{thm:uqr_elliptic_is_clifford}
	Let $M$ be a closed, connected, oriented Riemannian $n$-mani\-fold for $n \geq 2$. Suppose that $M$ is uniformly quasiregularly elliptic. Then there exists a bounded conformal structure $[g]$ on $M$ which satisfies the following property: there exists a space $\pharm{g}{n}(M; \R) \subset \pharmgeq{g}{n}(M; \R)$ such that $\pharm{g}{*}(M; \R)$ is closed under $+$ and $\odot_g$.
\end{thm}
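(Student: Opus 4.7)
The approach is to build directly on the strategy that presumably yields Theorem \ref{thm:uqr_elliptic_is_formal}, and then upgrade the closure property from $\wedge$ alone to the full Clifford-scaled product $\odot_g$. The starting point is the classical construction of an invariant measurable conformal structure: given a non-constant non-injective uniformly $K$-quasiregular self-map $f \colon M \to M$, Iwaniec--Martin type arguments produce a bounded measurable conformal structure $[g]$ on $M$ with $f^*[g] = [g]$. With respect to this $[g]$ the map $f$ becomes effectively $1$-quasiregular, so both of the equations \eqref{eq:diffeq_d} and \eqref{eq:diffeq_dstar} defining $\pharm{g}{k}(M; \R)$ are preserved by $f^*$; in particular, $f^*$ sends each $\pharm{g}{k}(M; \R)$ into itself, and any reasonable extension $\pharm{g}{n}(M; \R) \subset \pharmgeq{g}{n}(M; \R)$ will inherit $f^*$-invariance as well.

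Next, since $\odot_g$ by design depends only on the conformal class $[g]$, the invariance $f^*[g] = [g]$ forces $f^*$ to respect $\odot_g$, i.e.\ $f^*(\omega_1 \odot_g \omega_2) = (f^* \omega_1) \odot_g (f^* \omega_2)$. Now the Clifford product $\cdot_g$ can be written algebraically in terms of $\wedge$ and contractions built from $\hodge_g$, so every lower-degree component $\dfpart{\omega_1 \cdot_g \omega_2}_k$ is determined pointwise from $\omega_1$ and $\omega_2$. Since Theorem \ref{thm:uqr_elliptic_is_formal} already provides closure under $\wedge$, the remaining task is to verify that these additional contraction-type components, once renormalized by the pointwise factor $\abs{\dfpart{\omega_1 \cdot_g \omega_2}_k}_g^{k/(l+m) - 1}$, are themselves $(n/k)$-harmonic with respect to an appropriately chosen structure. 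My plan is to rerun the iteration-and-limit procedure that produces the invariant structure for Theorem \ref{thm:uqr_elliptic_is_formal}, but now applied to the larger family of forms generated under iterated Clifford-component operations, exploiting the finite-dimensionality of $H^*(M; \R)$ and compactness on the space of bounded conformal structures to extract a limit $[g]$ for which this enlarged family is still contained in $\pharm{g}{*}(M; \R)$.

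I expect the main obstacle to be the nonlinearity of \eqref{eq:diffeq_dstar} away from the middle degree $k = n/2$, which interferes both with the a priori linearity of $\pharm{g}{k}(M; \R)$ and with the candidacy of $\dfpart{\omega_1 \odot_g \omega_2}_k$ as a solution. The exponent $k/(l+m) - 1$ in the definition of $\odot_g$ is exactly what is needed so that the renormalized component has the correct homogeneity under pointwise rescaling, but verifying \eqref{eq:diffeq_dstar} for it requires distributing exterior derivatives across the renormalization factor in a manner that interacts delicately with $\hodge_g$. The success of the scheme should ultimately rest on a pointwise algebraic identity relating $d$, $\hodge_g$, and Clifford multiplication—of the flavour familiar from Clifford analysis on Euclidean space—promoted to the measurable setting via the conformal invariance of all quantities involved and the $f$-equivariance of $[g]$.
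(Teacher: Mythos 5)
Your setup is correct: the paper does start from the Iwaniec--Martin invariant conformal structure $[g_f]$, and it does use the $f^*$-equivariance of both $\pharm{g_f}{k}(M;\K)$ and the scaled Clifford product $\odot_{g_f}$ (Lemma \ref{lem:p_harmonic_pullback} and Corollary \ref{cor:QR_clifford_scaled}). But the main argument you sketch after that point would not go through, and it is not the paper's route.

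The central difficulty you identify --- verifying the equation \eqref{eq:diffeq_dstar} for the renormalized Clifford component $\dfpart{\omega_1 \odot_{g_f} \omega_2}_k$ --- is \emph{not} resolved in the paper by a pointwise identity relating $d$, $\hodge_g$ and Clifford multiplication. No such identity is used, and it is unclear one exists in the measurable setting. Instead the paper uses a dynamical argument. First, a graded basis of $\pharm{g_f}{*}(M;\C)$ consisting of $f^*$-eigenvectors is constructed (this is what the equality $\pharm{g_f}{k}(M;\C) = \cE^k_f(M;\C)$ from Lemma \ref{lem:closed_under_addition} buys: linearity, and hence a well-defined eigenvector basis). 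Then, since $\odot_{g_f}$ is $f^*$-equivariant, each component $\dfpart{\omega_1 \odot_{g_f} \omega_2}_k$ of a product of two basis eigenvectors is again an eigenvector of $f^*$ lying in $L^{n/k,\sharp}$. Finally, Lemma \ref{lem:eigenvectors_are_harmonic} shows that any such $f^*$-eigenvector in $L^{n/k,\sharp}$ is automatically $(n/k)$-harmonic: one takes its conformal Hodge decomposition $(d\alpha, d\beta, \gamma)$ from Proposition \ref{prop:nonlinear_Hodge_decomposition}, uses uniqueness and equivariance (Lemma \ref{lem:conf_hodge_decomposition_qr}) to see $d\alpha$ and $d\beta$ are themselves eigenvectors, and then kills them via the weak-convergence-to-zero argument of Lemma \ref{lem:weak_convergence_to_zero} because they represent the trivial cohomology class. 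The passage from the basis to general elements is Lemma \ref{lem:clifford_nonlinear_basis_lemma}, again relying crucially on the already-established linearity and on the rigidity of $\abs{\cdot}_g$ from Lemma \ref{lem:abs_value_rigidity}. In short, the nonlinear PDE is never verified by differentiation; it is verified by showing the candidate form lies in an eigenspace and that eigenvectors have trivial non-harmonic Hodge parts.

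Your alternative plan --- rerunning the Iwaniec--Martin limit procedure on ``the larger family of forms generated under iterated Clifford-component operations'' to find a $[g]$ containing them all in $\pharm{g}{*}(M;\R)$ --- is circular: the spaces $\pharm{g}{k}(M;\R)$ depend on $[g]$, so you cannot first fix a target family of forms and then choose $[g]$ so that exactly those forms become $(n/k)$-harmonic. The Iwaniec--Martin construction produces invariance of the conformal class under $f^*$, not control over which particular forms solve the resulting nonlinear system. You also do not address how $\pharm{g}{n}(M;\R)$ is to be chosen; the paper takes the span of the Okuyama--Pankka invariant measure $\mu_f$ (when representable by an integrable $n$-form), and this choice is essential both for the containment in $\pharmgeq{g}{n}(M;\R)$ and for $f^*$-invariance.
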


Since structures $[g]$ such as in Theorem \ref{thm:uqr_elliptic_is_clifford} are also conformally formal, we call such structures \emph{conformally formal in the Clifford sense}. With the additional information provided by this property, the embedding of Theorem \ref{thm:conf_formal_algebra_embedding} improves in the following way, finally yielding the general result behind the topological obstruction of Theorem \ref{thm:4_manifold_uqr_obstruction}.

\begin{thm}\label{thm:conf_formal_clifford_embedding}
	Let $M$ be a closed, connected, oriented, smooth $n$-manifold. Suppose that $M$ is conformally formal in the Clifford sense. Then there exists an embedding of graded algebras $\Phi \colon H^*(M; \R) \to \wedge^* \R^n$ which maps the cup product to the exterior product. Moreover, the image of $\Phi$ is closed under the Euclidean Clifford product of $\wedge^* \R^n$.
\end{thm}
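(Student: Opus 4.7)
My plan is to build on Theorem \ref{thm:conf_formal_algebra_embedding}: that result already produces an embedding of graded algebras $\Phi \colon H^*(M; \R) \hookrightarrow \wedge^* \R^n$ intertwining the cup and wedge products, so what remains is to upgrade the closure of $\im \Phi$ from Hodge star to the full Euclidean Clifford product. For this I will need the proof of Theorem \ref{thm:conf_formal_algebra_embedding} to supply the concrete description that $\Phi$ is realized by evaluating $(n/k)$-harmonic cohomological representatives at a well-chosen base point $x_0 \in M$, with $T^*_{x_0} M$ identified with $\R^n$ via a $g_{x_0}$-orthonormal frame. Under this identification the pointwise Clifford product $\cdot_g$ at $x_0$ becomes the standard Euclidean Clifford product on $\wedge^* \R^n$, since the Clifford product on a single fiber is determined entirely by the inner product there.

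With this picture fixed, let $[\omega_1], [\omega_2] \in H^*(M; \R)$ be cohomology classes of pure degrees $l$ and $m$, represented by $(n/k)$-harmonic forms $\omega_1 \in \pharm{g}{l}(M; \R)$ and $\omega_2 \in \pharm{g}{m}(M; \R)$. Evaluation at $x_0$ yields
\[
	\Phi([\omega_1]) \cdot_{\mathrm{Eucl}} \Phi([\omega_2]) = (\omega_1 \cdot_g \omega_2)(x_0) = \sum_{k} \dfpart{\omega_1 \cdot_g \omega_2}_k (x_0),
\]
so it suffices to show each homogeneous component $\dfpart{\omega_1 \cdot_g \omega_2}_k (x_0)$ lies in $\im \Phi$. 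From the definition of $\odot_g$, this component and $\dfpart{\omega_1 \odot_g \omega_2}_k (x_0)$ are nonnegative real scalar multiples of each other, vanishing simultaneously. The Clifford-sense formality hypothesis gives $\dfpart{\omega_1 \odot_g \omega_2}_k \in \pharm{g}{k}(M; \R)$, hence this form represents a cohomology class $\eta_k \in H^k(M; \R)$ whose image $\Phi(\eta_k)$ equals $\dfpart{\omega_1 \odot_g \omega_2}_k (x_0)$. Consequently $\dfpart{\omega_1 \cdot_g \omega_2}_k (x_0)$ is a real scalar multiple of $\Phi(\eta_k) \in \im \Phi$, and bilinear extension across all pairs $([\omega_1], [\omega_2])$ closes $\im \Phi$ under the Euclidean Clifford product.

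The main obstacle is verifying the pointwise realization of $\Phi$ in the generality needed. Specifically, a single base point $x_0$ must serve simultaneously for all bidegree combinations $(l, m)$, which I expect to handle by choosing $x_0$ in the intersection of the full-rank evaluation loci across all degrees, where every $(n/k)$-harmonic representative is pinned down by its value at $x_0$. This step should mirror, and perhaps be extracted directly from, the proof of Theorem \ref{thm:conf_formal_algebra_embedding}. Minor care is also required for the extremal degrees: in degree $k = 0$ the space $\pharm{g}{0}(M; \R)$ consists of constants and the claim is immediate, while in degree $k = n$ the assumption of Clifford-sense conformal formality already singles out a subspace $\pharm{g}{n}(M; \R) \subset \pharmgeq{g}{n}(M; \R)$ closed under $\odot_g$, so the same argument goes through unchanged there.
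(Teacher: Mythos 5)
Your proposal is correct and takes essentially the same route as the paper, which in fact proves Theorems \ref{thm:conf_formal_algebra_embedding} and \ref{thm:conf_formal_clifford_embedding} simultaneously: the embedding $\Phi$ is realized as evaluation at a point $x$ chosen in the intersection of finitely many full-measure subsets of the common support (one set for each required identity among the finitely many basis elements, including the identities in \eqref{eq:S5_eq} encoding the $\odot_g$-products of basis pairs), after which the observation that $\dfpart{\omega_1 \odot_g \omega_2}_k$ and $\dfpart{\omega_1 \cdot_g \omega_2}_k$ differ pointwise by a nonnegative scalar, combined with closure of $\im \Phi$ under scaling and the bilinearity of $\cdot_g$, gives closure under the Euclidean Clifford product. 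The one point to make explicit, which you gesture at but do not pin down, is that the base point $x$ must be chosen for a fixed finite basis of $\pharm{g}{*}(M;\R)$ rather than for arbitrary pairs of classes, since the a.e.\ identities you invoke can only be imposed simultaneously for finitely many equations; the bilinearity of $\cdot_g$ (not of $\odot_g$) at the fiber over $x$ is then what extends the conclusion from the basis to all of $\im\Phi$.
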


\subsection{A remark on complex coefficients}

In our definition of conformally formal manifolds, we used differential forms with real coefficients. However, as was also the case in \cite{Kangasniemi_CohomBound}, our methods require the use of differential forms with complex coefficients. Due to this, we end up having to also define \emph{conformally $\C$-formal} bounded conformal structures $[g]$, where the spaces $\pharm{g}{k}(M; \R)$ in the definition are replaced by the corresponding spaces $\pharm{g}{k}(M; \C)$ of differential forms with complex coefficients. Conformal formality in the Clifford sense is also similarly extended to complex coefficients. 

The complex versions of conformal formality are easily seen to imply the corresponding real versions, but unlike in the case of harmonic forms, the non-linearity of \eqref{eq:diffeq_dstar} makes it unclear to us whether the converse holds. We prove Theorems \ref{thm:uqr_elliptic_is_formal}, \ref{thm:conf_formal_algebra_embedding}, \ref{thm:uqr_elliptic_is_clifford} and \ref{thm:conf_formal_clifford_embedding} for both real and complex coefficients, with complex versions of the statements given with the proofs.

\subsection{Structure of this paper}

Our discussion splits into essentially three parts. The first part, consisting of Sections \ref{sect:p_harm_recap}, \ref{sect:conf_formality} and \ref{sect:Clifford_alg}, focuses on the algebraic theory of conformally formal structures. In it, we recall the necessary prerequisites of measurable conformal structures, $p$-harmonic forms and the Clifford product. Moreover, we define conformally formal structures in detail, and prove some of the initial algebraic consequences of the definition.

The second part, consisting of Sections \ref{sect:Hodge_theory}, \ref{sect:conf_cohomology} and \ref{sect:consequences}, focuses on topological obstructions to conformal formality. We first recall some prerequisites from non-linear Hodge theory. We then discuss conformal cohomology, and use it to prove Theorems \ref{thm:conf_formal_algebra_embedding} and \ref{thm:conf_formal_clifford_embedding}. Finally, we discuss obstructions implied by Theorems \ref{thm:conf_formal_algebra_embedding} and \ref{thm:conf_formal_clifford_embedding}, including the obstruction given in Theorem \ref{thm:4_manifold_uqr_obstruction}.

The third part, consisting of Sections \ref{sect:qr_and_structs} and \ref{sect:formality_of_uqr_ell}, is the quasiregular part. In it, we recall the necessary prerequisites of quasiregular maps, and then prove Theorems \ref{thm:uqr_elliptic_is_formal} and \ref{thm:uqr_elliptic_is_clifford}. 

\begin{acknowledgments}
	The author thanks Pekka Pankka for introducing him to the subject, as well as for continued mathematical discussions and guidance. Moreover, the initial ideas which started this work were conceived during a visit to IMPAN for the Simons Semester in Fall 2019, and the author thanks the semester organizers and participants for creating an inspiring research environment.
\end{acknowledgments}

%%%%%%%%%%%%%%%%%%%%%%%%%%%%%%%%%%%%%%%%%%%%%%%%%%%%%%%%%%%%%%%%%%%%%%%%%%%%%%%
%%%%%%%%%%%%%%%%%%%%%%%%%%%%%%%%%%%%%%%%%%%%%%%%%%%%%%%%%%%%%%%%%%%%%%%%%%%%%%%

\section{Conformal structures and $p$-harmonic forms}\label{sect:p_harm_recap}

In this section, we recall the necessary prerequisites of bounded conformal structures, and of $p$-harmonic forms with real or complex coefficients.

\subsection{Bounded conformal structures}

Let $M$ be a closed, connected, oriented, smooth $n$-manifold. A \emph{conformal structure} on $M$ is an equivalence class $[g]$ of measurable Riemannian metrics under the equivalence relation of multiplication with a positive measurable function. In particular, two measurable Riemannian metrics $g$ and $g'$ on $M$ belong to the same conformal structure if $g' = \rho^2 g$ a.e. on $M$ for some positive measurable function $\rho \colon M \to (0, \infty)$.

Given two conformal structures $[g]$ and $[g']$ on $M$, their \emph{conformal distance} is defined by
\[
	d([g], [g']) = \esssup_{x \in M} \left( \max \left\{ 
		\log \left( \frac{\abs{v}_g}{\abs{w}_g} \right) :
		v, w \in T_x M, \abs{v}_{g'} = \abs{w}_{g'} = 1 
	\right\}\right).
\]
We then say that a conformal structure $[g]$ is \emph{bounded} if there exists a smooth Riemannian metric $g_0$ on $M$ such that $d([g], [g_0]) < \infty$. Note that, due to the compactness of $M$, we have $d([g_0], [g_0']) < \infty$ for all smooth Riemannian metrics $g_0, g_0'$ on $M$. Hence, for a bounded conformal structure $[g]$ on a closed manifold $M$, we in fact have $d([g], [g_0]) < \infty$ for all smooth Riemannian metrics $g_0$ on $M$.

\subsection{Differential forms of real and complex coefficients}

Suppose then that $[g]$ is a bounded conformal structure on a closed, connected, oriented, smooth $n$-manifold $M$. Let $g_0$ be a smooth Riemannian metric on $M$, the choice of which makes $M$ into a Riemannian manifold. Throughout this paper, we use $\K$ to denote either $\R$ or $\C$, when the discussion is applicable to both fields of coefficients.

Given $k \in \{0, \dots, n\}$, we denote by $\Gamma(\wedge^k M; \R)$ the space of \emph{measurable differential $k$-forms} on $M$. Moreover, we let $\Gamma(\wedge^k M; \C) = \Gamma(\wedge^k M; \R) \otimes \C$ be the space of \emph{measurable differential $k$-forms with complex coefficients}. In particular, elements $\omega \in \Gamma(\wedge^k M; \C)$ are pairs $\omega = \alpha + i\beta$, where $\alpha, \beta \in \Gamma(\wedge^k M; \C)$. 

A measurable Riemannian metric $g$ defines a.e.\ on $M$ a point-wise inner product $\ip{\cdot}{\cdot}_g$ and a point-wise norm $\abs{\cdot}_g$ for elements of $\Gamma(\wedge^k M; \K)$. Given $\omega \in \Gamma(\wedge^k M; \K)$ and $p \in [1, \infty]$, we denote by $\norm{\omega}_{p, g}$ the $L^p$-norm of $\omega$ with respect to $g$, that is,
\[
	\norm{\omega}_{p, g} = \left( \int_M \abs{\omega}_g^p \vol_g\right)^{\frac{1}{p}}.
\] 
The space of $\omega \in \Gamma(\wedge^k M; \K)$ with $\norm{\omega}_{p, g_0} < \infty$ is denoted $L^p(\wedge^k M; \K)$. 

Although the specific norms $\norm{\omega}_{p, g_0}$ vary depending on $g_0$, the spaces $L^p(\wedge^k M; \K)$ are independent on the choice of smooth $g_0$ due to the compactness of $M$. Hence, we may discuss the spaces $L^p(\wedge^k M; \K)$ on a closed, oriented, smooth Riemannian $n$-manifold, even without fixing a smooth metric $g_0$. Moreover, for the space with complex coefficients, we in fact have $L^p(\wedge^k M; \C) = L^p(\wedge^k M; \R) \otimes \C$, i.e.\ a complex differential form is $L^p$-integrable if and only if its real and imaginary part are.

We note that there exists a unique $g \in [g]$ for which $\vol_{g} = \vol_{g_0}$. This special element in a conformal structure is particularly useful when carrying out computations, since for this metric $g$ we have
\begin{equation}\label{eq:norm_comparison}
C^{-1} \abs{\cdot}_{g_0} \leq \abs{\cdot}_{g} \leq C \abs{\cdot}_{g_0}
\end{equation}
a.e.\ on $M$. Here, $C$ is only dependent on $n$ and the conformal distance $d([g], [g_0])$. In particular, for this choice of $g$, the norms $\norm{\cdot}_{p, g}$ and $\norm{\cdot}_{p, g_0}$ are comparable.

We also note that for a specific exponent, the norm $\norm{\omega}_{p, g}$ is in fact independent on the choice of $g \in [g]$. Namely, if $g' = \rho^2 g$ and $\omega \in \Gamma(\wedge^k M; \K)$, then $\abs{\omega}_{g'} = \rho^{-k} \abs{\omega}_g$ and $\vol_{g'} = \rho^{n} \vol_g$. Hence,
\[
	\norm{\omega}_{\frac{n}{k}, g'} = \norm{\omega}_{\frac{n}{k}, g},
\]
where for $k = 0$ we interpret $n/k = \infty$. This exponent $p = n/k$ is hence called the \emph{conformal exponent}.

\subsection{$p$-harmonic forms of the conformal exponent} 

Suppose that $\omega \in L^1(\wedge^k M; \K)$, where $k < n$. We say that $d\omega \in L^1(\wedge^{k+1} M; \K)$ is a \emph{weak differential} of $\omega$ if, for every smooth real $(n-k-1)$-form $\eta \in C^\infty(\wedge^{n-k-1} M; \R)$, the $n$-forms $\omega \wedge d\eta$ and $d\omega \wedge \eta$ are integrable and satisfy
\[
	\int_M \omega \wedge d\eta = (-1)^{k+1} \int_M d\omega \wedge \eta.
\]

Let $g' \in [g]$. The \emph{Hodge star} $\hodge_{g'}$ with respect to $g'$ is defined for a.e.\ $x \in M$ by the relation
\[
	\alpha \wedge \hodge_{g'} \beta = \ip{\alpha}{\beta}_{g'} \vol_{g'}
\]
for all $\alpha, \beta \in (\wedge^k T^*_x M) \otimes \K$, where $k \in \{0, \dots, n\}$. Now, for $k \in \{1, \dots, n-1\}$ and $p \in (1, \infty)$, we call a $k$-form $\omega \in \Gamma(\wedge^k M; \K)$ \emph{$p$-harmonic} with respect to $g'$, if $\norm{\omega}_{p, g'} < \infty$ and $\omega$ satisfies the weak partial differential equations
\begin{gather*}
	d\omega = 0, \text{ and}\\
	d \left(\abs{\omega}_{g'}^{p-2} \hodge_{g'} \omega\right) = 0.
\end{gather*}
Note that, if $\omega_x = 0$ for $x \in M$, then we interpret that $\abs{\omega_x}_{g'}^{p-2} \hodge_{g'} \omega_x = 0$.

In general, $p$-harmonic forms may depend on the choice of $g'$ from the conformal structure. However, in the conformal exponent $p = n/k$, the dependence is only on the conformal structure $[g]$. We recall the simple proof of this fact.

\begin{lemma}\label{lem:conf_exp_invariance}
	Let $M$ be a closed, connected, oriented smooth $n$-manifold. Let $[g]$ be a bounded conformal structure on $M$, let $g_1, g_2 \in [g]$, and let $\K \in \{\C, \R\}$. Suppose that $\omega \in \Gamma(\wedge^k M; \K)$ is $(n/k)$-harmonic with respect to $g_1$, where $k \in \{1, \dots, n-1\}$. Then $\omega$ is $(n/k)$-harmonic with respect to $g_2$.
\end{lemma}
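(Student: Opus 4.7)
The plan is to observe that the two conditions defining $(n/k)$-harmonicity behave very differently under a conformal change of metric: the first equation $d\omega = 0$ involves no metric at all and so transfers trivially from $g_1$ to $g_2$, while the second equation involves both $\abs{\omega}_{g'}^{p-2}$ and $\hodge_{g'} \omega$, each of which scales by a power of the conformal factor. The whole point is that, in the exponent $p = n/k$, those scalings exactly cancel, so that $\abs{\omega}_{g'}^{p-2} \hodge_{g'} \omega$ is in fact the \emph{same} measurable $(n-k)$-form for every $g' \in [g]$. Once that pointwise identity is established, the weak equation $d(\abs{\omega}_{g'}^{p-2} \hodge_{g'} \omega) = 0$ is literally identical for $g_1$ and $g_2$, and the conclusion is immediate. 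The one integrability hypothesis $\norm{\omega}_{n/k, g'} < \infty$ is already noted in the paper to be independent of the choice of representative in $[g]$, so nothing needs to be checked there.

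Concretely, I would write $g_2 = \rho^2 g_1$ for some positive measurable $\rho \colon M \to (0, \infty)$, and then compute how each piece scales on $k$-forms. From $\ip{\cdot}{\cdot}_{g_2} = \rho^{-2k} \ip{\cdot}{\cdot}_{g_1}$ on $\wedge^k$ and $\vol_{g_2} = \rho^n \vol_{g_1}$, the defining relation $\alpha \wedge \hodge_{g'} \beta = \ip{\alpha}{\beta}_{g'} \vol_{g'}$ gives
\[
    \hodge_{g_2} \omega = \rho^{n-2k}\, \hodge_{g_1} \omega,
\]
while $\abs{\omega}_{g_2} = \rho^{-k} \abs{\omega}_{g_1}$ gives
\[
    \abs{\omega}_{g_2}^{p-2} = \rho^{-k(p-2)}\, \abs{\omega}_{g_1}^{p-2}.
\]
Multiplying these, the total power of $\rho$ is $n - 2k - k(p-2) = n - kp$, which vanishes precisely when $p = n/k$. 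Hence $\abs{\omega}_{g_2}^{(n/k)-2} \hodge_{g_2} \omega = \abs{\omega}_{g_1}^{(n/k)-2} \hodge_{g_1} \omega$ pointwise a.e.\ on $M$, with the usual convention that both sides are set to $0$ where $\omega = 0$.

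Given this identity, the weak codifferential equation is the same statement for $g_1$ and for $g_2$, because the definition of the weak differential $d$ is purely in terms of pairings against smooth test forms and makes no reference to any Riemannian metric. Combined with the metric-free equation $d\omega = 0$, which is assumed for $g_1$ and is therefore automatic for $g_2$, this shows $\omega$ is $(n/k)$-harmonic with respect to $g_2$. I do not expect any genuine obstacle in this proof; the only place to be careful is the bookkeeping of the exponent on $\rho$, and making sure one treats the zero set of $\omega$ consistently so that the pointwise a.e.\ identity really holds everywhere needed.
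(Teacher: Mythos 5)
Your proof is correct and takes essentially the same route as the paper: write $g_2 = \rho^2 g_1$, note that $d\omega = 0$ and the $L^{n/k}$-norm are metric-free or conformally invariant, and compute that $\abs{\omega}_{g'}^{(n/k)-2}\hodge_{g'}\omega$ is pointwise independent of $g' \in [g]$ because the powers of $\rho$ cancel exactly at $p = n/k$. The bookkeeping $\rho^{-k(p-2)}\cdot\rho^{n-2k} = \rho^{n-kp}$ is correct, and the rest is as in the paper.
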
 
\begin{proof}
	As pointed out before, we have $\norm{\omega}_{n/k, g_1} = \norm{\omega}_{n/k, g_2}$. The equation $d\omega = 0$ is dependent only on the smooth structure of $M$. Moreover, since $g_1$ and $g_2$ are conformally equivalent, there exists a $\rho \colon M \to (0, \infty)$ such that $g_2 = \rho^2 g_1$, and hence
	\[
		\abs{\omega}_{g_2}^{\frac{n}{k}-2} \hodge_{g_2} \omega
		= \left(\rho^{-k}\abs{\omega}_{g_1}\right)^{\frac{n}{k}-2}
			\rho^{n-2k} \hodge_{g_1} \omega
		= \abs{\omega}_{g_1}^{\frac{n}{k}-2} \hodge_{g_1} \omega.
	\]
	Therefore, $d (\abs{\omega}_{g_2}^{(n/k)-2} \hodge_{g_2} \omega) = d (\abs{\omega}_{g_1}^{(n/k)-2} \hodge_{g_1} \omega) = 0$.
\end{proof}

As stated in the introduction, for $k \in \{1, \dots, n-1\}$, we denote by $\pharm{g}{k}(M; \K)$ the space of $(n/k)$-harmonic $k$-forms on $M$ with respect to the bounded conformal structure $[g]$. By Lemma \ref{lem:conf_exp_invariance}, the space is independent on choice of representative metric $g$ from within the conformal structure. Moreover, we let $\pharm{g}{0}(M; \K)$ be the space of essentially bounded $0$-forms $\omega$ which solve the weak partial differential equation $d\omega = 0$. Since $M$ is connected, this is exactly the space of almost everywhere constant $\K$-valued functions on $M$.

We recall the fundamental invariance properties of the spaces $\pharm{g}{k}(M; \K)$.

\begin{lemma}\label{lem:conformal_hodge}
	Let $M$ be a closed, connected, oriented smooth $n$-manifold, let $[g]$ be a bounded conformal structure on $M$, and let $\K \in \{\C, \R\}$. Let $\omega \in \pharm{g}{k}(M; \K)$ for $k \in \{1, \dots, n-1\}$. Then the following conditions hold.
	\begin{enumerate}
		\item \label{enum:pharm_prop_scaling} For every $\lambda \in \K$, we have $\lambda \omega \in \pharm{g}{k}(M; \K)$.
		\item \label{enum:pharm_prop_confhodge} We have
		\[
			\abs{\omega}_{g}^{\frac{n}{k}-2} \hodge_{g} \omega \in \pharm{g}{n-k}(M),
		\]
		where the above is interpreted as $0$ at points $x \in M$ where $\omega_x = 0$.
		\item \label{enum:pharm_prop_conj} If $\K = \C$, then we have $\overline{\omega} \in \pharm{g}{k}(M; \C)$, where $\overline{\omega}$ is the \emph{conjugate} of $\omega$ defined by $\overline{\alpha + i \beta} = \alpha - i\beta$ for $\alpha, \beta \in \Gamma(\wedge^k M; \R)$.
	\end{enumerate}
\end{lemma}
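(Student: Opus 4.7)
The plan is to verify, for each of the three claims, the two weak PDEs defining $(n/k)$-harmonicity by direct pointwise manipulation combined with the linearity of the weak exterior derivative. Throughout I fix a representative $g \in [g]$, which is legitimate since Lemma \ref{lem:conf_exp_invariance} tells us $\pharm{g}{k}(M;\K)$ is independent of this choice.

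For (1), linearity of the weak differential gives $d(\lambda\omega) = \lambda\, d\omega = 0$. For the second PDE, a pointwise computation yields
\[
	\abs{\lambda\omega}_g^{n/k - 2}\hodge_g(\lambda\omega) = \smallabs{\lambda}^{n/k - 2}\lambda\cdot \abs{\omega}_g^{n/k - 2}\hodge_g\omega,
\]
and since $\smallabs{\lambda}^{n/k - 2}\lambda$ is a constant element of $\K$, the weak differential of the right-hand side vanishes by the hypothesis on $\omega$; the case $\lambda = 0$ is covered by the convention on zeros of $\omega$.

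For (2), set $\eta = \abs{\omega}_g^{n/k - 2}\hodge_g\omega$. Then $d\eta = 0$ is precisely the second PDE satisfied by $\omega$. To check the second PDE for $\eta$ at the conformal exponent $n/(n-k)$, I use $\abs{\hodge_g \alpha}_g = \abs{\alpha}_g$ to obtain $\abs{\eta}_g = \abs{\omega}_g^{n/k - 1}$, and then verify by a short exponent computation that $(n/k - 1)(n/(n-k) - 2) = 2 - n/k$. Combining this with the involution identity $\hodge_g\hodge_g = (-1)^{k(n-k)}\id$ on $k$-forms yields $\abs{\eta}_g^{n/(n-k) - 2}\hodge_g\eta = (-1)^{k(n-k)}\omega$ on $\{\omega \neq 0\}$, with both sides zero on $\{\omega = 0\}$ by convention. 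Taking $d$ of this identity reduces the desired PDE to $d\omega = 0$, which holds. The same exponent calculation gives $\norm{\eta}_{n/(n-k), g} = \norm{\omega}_{n/k, g}^{(n-k)/k} < \infty$, securing the integrability requirement.

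For (3), the three key observations are that $d$ commutes with complex conjugation (since it is $\R$-linear), that $\abs{\overline{\omega}}_g = \abs{\omega}_g$ (from the Hermitian extension of $\ip{\cdot}{\cdot}_g$ to $\C$-valued forms), and that $\hodge_g\overline{\omega} = \overline{\hodge_g\omega}$ (because the Hodge star is $\R$-linear on real forms and extends $\C$-linearly). Hence $\abs{\overline{\omega}}_g^{n/k - 2}\hodge_g\overline{\omega}$ is the pointwise conjugate of $\abs{\omega}_g^{n/k - 2}\hodge_g\omega$, and applying $d$ produces the conjugate of zero. The only step that demands a nontrivial calculation is the exponent/double-Hodge identity in (2); everything else follows immediately from pointwise identities and the linearity of the weak differential.
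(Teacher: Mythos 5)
Your proof is correct and follows essentially the same approach as the paper's: for each claim you verify both weak PDEs defining $(n/k)$-harmonicity by pointwise manipulation, using the identical exponent computation and the involution $\hodge_g\hodge_g = (-1)^{k(n-k)}\id$ in part (2), and the identical $L^p$-norm computation. One small convention mismatch to flag: the paper's complex Hodge star is conjugate-linear (the paper's own proof writes $\hodge_g(\lambda\omega) = \overline{\lambda}\hodge_g\omega$), so in part (1) the extracted scalar should be $\smallabs{\lambda}^{n/k-2}\overline{\lambda}$ rather than $\smallabs{\lambda}^{n/k-2}\lambda$, and your parenthetical in (3) claiming $\C$-linear extension contradicts this; neither affects the conclusions, since the scalar is still a constant in $\K$ and the conjugation identity $\hodge_g\overline{\omega}=\overline{\hodge_g\omega}$ holds under either convention.
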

\begin{proof}
	Property \eqref{enum:pharm_prop_scaling} follows immediately from the formulas $d(\lambda \omega) = \lambda d\omega$, $\abs{\lambda \omega}_g = \abs{\lambda} \abs{\omega}_g$ and $\hodge_g(\lambda \omega) = \overline{\lambda} \hodge_g \omega$. Similarly, property \eqref{enum:pharm_prop_conj} immediately follows from the formulas $d\overline{\omega} = \overline{d\omega}$, $\abs{\overline{\omega}}_g = \abs{\omega}_g$, and $\hodge_g\overline{\omega} = \overline{\hodge_g \omega}$.
	
	For property \eqref{enum:pharm_prop_confhodge}, we denote $\omega' = \abs{\omega}_{g}^{(n/k)-2} \hodge_{g} \omega$. By \eqref{eq:diffeq_dstar}, we have $d\omega' = 0$. Moreover, we have
	\begin{multline*}
		\abs{\omega'}_{g}^{\frac{n}{n-k}-2} \hodge_{g} \omega'
		= \Bigl( \abs{\omega}_g^{\frac{n-k}{k}}\Bigr)^{\frac{k}{n-k}-1}
			\hodge_g \Bigl(\abs{\omega}_{g}^{\frac{n-k}{k}-1} \hodge_g \omega\Bigr)\\
		= \abs{\omega}_g^{1 - \frac{n-k}{k}} \abs{\omega}_{g}^{\frac{n-k}{k}-1}
			\hodge_g \hodge_g \omega
		= (-1)^{k(n-k)}\omega.
	\end{multline*}
	Hence, by \eqref{eq:diffeq_d} we have $d \abs{\omega'}_{g}^{n/(n-k) - 2} \hodge_g \omega' = 0$. For the final requirement, we have
	\[
		\norm{\abs{\omega}_{g}^{^\frac{n}{k}-2} \hodge_{g} \omega}_{g, \frac{n}{n-k}} 
		= \norm{\omega}_{g, \frac{n}{k}}^\frac{k}{n-k} < \infty. 
	\]
\end{proof}

Finally, we note that $\pharm{g}{k}(M; \R)$ is a subset of $\pharm{g}{k}(M; \C)$ by the identification $\omega = \omega + i0$. More precisely, $\pharm{g}{k}(M; \R)$ consists of exactly the elements of $\pharm{g}{k}(M; \C)$ which have zero imaginary part. Indeed, this is since the $L^p$-norm and the equations \eqref{eq:diffeq_d}-\eqref{eq:diffeq_dstar} are the same for real forms and for  complex forms with no imaginary part. 

\section{Conformal formality}\label{sect:conf_formality}

In this section, we begin the discussion on the conformally formal structures defined in the introduction, and prove several basic algebraic properties of the space $\pharm{g}{*}(M; \K)$ for such structures $[g]$.

\subsection{Conformally formal structures}

We recall the definition of the space $\pharmgeq{g}{n}(M; \R)$ from the introduction: $\pharmgeq{g}{n}(M; \R)$ consists of all measurable $n$-forms $\omega$ for which $\hodge_{g} \omega$ is either non-negative a.e.\ or non-positive a.e. This space is independent of the choice of metric in a conformal structure, since a conformal change of metric multiplies the Hodge star by a positive function.

\begin{rem}
	The space $\pharmgeq{g}{n}(M; \R)$ is an interpretation of the space of solutions for the second equation \eqref{eq:diffeq_dstar} of $(n/k)$-harmonic $k$-forms in the case $k = n$. Indeed, in this case the equation for an $n$-form $\omega$ becomes
	\begin{equation}\label{eq:diffeq_dstar_case_n}
		d \left( \frac{\hodge_g \omega}{\abs{\hodge_g \omega}} \right) = 0.
	\end{equation}
	The quantity being differentiated therefore essentially corresponds to the sign of $\hodge_g \omega$, and \eqref{eq:diffeq_dstar_case_n} requires that this sign is constant. However, \eqref{eq:diffeq_dstar_case_n} becomes ill-posed at points $x \in M$ where $\omega_x = 0$. Hence, obtaining a concrete space of solutions for \eqref{eq:diffeq_dstar_case_n} requires a choice of how to treat such values of $\omega$.
\end{rem}

We now define the complex counterpart $\pharmgeq{g}{n}(M; \C)$. The definition is most easily stated in terms of the real version:
\begin{align*}
	\pharmgeq{g}{n}(M; \C) &= \left\{ \lambda \omega : \lambda \in \C, \omega \in \pharmgeq{g}{n}(M; \R) \right\}.%, \text{ and}\\
%	\pharmg{g}{n}(M; \C) &= \left\{ \lambda \omega : \lambda \in \C, \omega \in \pharmg{g}{n}(M; \R) \right\}.
\end{align*}
The resulting space $\pharmgeq{g}{n}(M; \C)$ essentially consists of $n$-forms $\omega$ for which $\arg (\hodge_g \omega)$ is constant, or in other words, for which the functional $\hodge_g \omega \colon M \to \C$ points in the direction of a single unit complex number at every point of $M$. Similar to the real counterpart, the space $\pharmgeq{g}{n}(M; \C)$ is also conformally invariant.

We now state the definition of conformally formal bounded conformal structures, where the real case is merely recalling the definition from the introduction.

\begin{defn}\label{def:conformally_formal_R_and_C}
	Let $M$ be a closed, connected, oriented, smooth $n$-manifold, let $[g]$ be a bounded conformal structure on $M$, and let $\K \in \{\R, \C\}$. We say that $[g]$ is \emph{conformally $\K$-formal} if there exists a set $\pharm{g}{n}(M; \K) \subset \pharmgeq{g}{n}(M; \K)$ for which $(\pharm{g}{*}(M; \K), +, \wedge)$ is a $\K$-algebra. In case $\K = \R$, we may also omit the $\R$ and merely use the term \emph{conformally formal}.
\end{defn}

\subsection{Algebraic properties}

We begin by pointing out that all vector spaces contained in $\pharmgeq{g}{n}(M; \K)$ are small.

\begin{lemma}\label{lem:one_sign_vector_spaces}
	Let $M$ be a closed, connected, oriented, smooth $n$-manifold, and let $[g]$ be a bounded conformal structure on $M$. Let $V \subset \pharmgeq{g}{n}(M; \K)$ be a vector space with coefficients in $\K$. Then $\dim_\K (V) \leq 1$.
\end{lemma}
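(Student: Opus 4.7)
The plan is to argue by contradiction: if $V$ contained two $\K$-linearly independent forms $\omega_1, \omega_2$, I would produce a linear combination lying in $V$ but violating the sign/direction condition defining $\pharmgeq{g}{n}(M; \K)$. Since $\hodge_g$ is a pointwise $\K$-linear isomorphism from measurable $n$-forms to measurable $\K$-valued functions on $M$, it suffices to work with the scalar functions $f_j := \hodge_g \omega_j$, which are linearly independent measurable $\K$-valued functions whose values a.e.\ lie on a single ray in $\K$ through the origin by the definition of $\pharmgeq{g}{n}(M; \K)$.

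For the real case, after replacing $\omega_j$ by $-\omega_j$ if needed I may assume $f_1, f_2 \geq 0$ almost everywhere and $f_2 \not\equiv 0$ (else $\omega_2 = 0$). The key observation is that $f_1 - t f_2 \geq 0$ a.e.\ precisely when $t \leq a$, where $a$ is the essential infimum of $f_1/f_2$ on $\{f_2 > 0\}$; and $f_1 - t f_2 \leq 0$ a.e.\ precisely when $t \geq b$ and $f_1 = 0$ a.e.\ on $\{f_2 = 0\}$, where $b$ is the corresponding essential supremum. A short analysis shows that the intervals $(-\infty, a]$ and $[b, \infty)$ cover $\R$ only in the degenerate case $a = b$ with $f_1 = 0$ on $\{f_2 = 0\}$, which forces $f_1 = a f_2$ a.e.\ and contradicts linear independence. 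For $\K = \C$, each element of $\pharmgeq{g}{n}(M; \C)$ satisfies $\hodge_g \omega = e^{i\theta} h$ for some $\theta \in \R$ and real $h$ of constant sign; I would rotate $\omega_1, \omega_2$ by appropriate unit complex scalars (preserving $V$) so that $\hodge_g \omega_j \geq 0$, and then apply the real case to obtain a real $t$ for which $\hodge_g(\omega_1 - t \omega_2)$ is real-valued but sign-changing. A short check shows no such combination can lie in $\pharmgeq{g}{n}(M; \C)$: if it equalled $\lambda \omega$ with $\omega \in \pharmgeq{g}{n}(M; \R)$ nonzero, realness of $\lambda \hodge_g \omega$ would force $\lambda \in \R$, and then $\hodge_g(\lambda \omega)$ would inherit the constant sign of $\hodge_g \omega$.

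The main obstacle I anticipate is the bookkeeping around the set $\{f_2 = 0\}$ in the real case, where $f_1/f_2$ is undefined and the sign condition for $f_1 - t f_2$ can fail independently of the essential supremum of $f_1/f_2$; once that is cleanly handled, the complex case reduces routinely to the real one via the unit-modulus rotation trick, using only that $V$ is closed under scalar multiplication by arbitrary elements of $\K$.
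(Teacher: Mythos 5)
Your proof is correct and follows essentially the same route as the paper's: normalize by unit scalars so that $\hodge_g\omega_j \geq 0$, then analyze the set of real scalars $t$ for which $\omega_1 - t\omega_2$ stays one-signed, concluding that the two resulting subsets of $\R$ can cover $\R$ only if $\omega_1$ and $\omega_2$ are proportional. Where the paper invokes a soft connectedness argument (two nonempty closed sets covering $\R$ must intersect), you compute the sets explicitly as $(-\infty,a]$ and $[b,\infty)$ via essential infima/suprema; your version handles the $\{f_2=0\}$ bookkeeping a bit more explicitly, but the underlying idea and the complex-to-real reduction by unit-modulus rotation are the same.
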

\begin{proof}
	Suppose that $\omega_1, \omega_2 \in V \setminus \{0\}$ We wish to show that $\omega_1$ and $\omega_2$ are linearly dependent. We may write $\omega_i = h_i \vol_g$, in which case $\hodge_g \omega_i = h_i$. Since $\omega_i \in \pharmgeq{g}{n}(M; \K)$, by multiplying $\omega_i$ with a unit scalar in $\K$, we may assume that $h_i(x) \in \R$ and $h_i(x) \geq 0$ for a.e.\ $x \in M$.
	
	Let $S_1 = \{ \lambda \in \R : h_1 + \lambda h_2 \geq 0 \text{ a.e.} \}$ and $S_2 = \{ \lambda \in \R : h_1 + \lambda h_2 \leq 0 \text{ a.e.} \}$. Since
	\[
	 \omega_1 + \lambda \omega_2 
	 \in V \cap \Gamma(\wedge^n M; \R) 
	 \subset \pharmgeq{g}{n}(M; \K) \cap \Gamma(\wedge^n M; \R)
	 = \pharmgeq{g}{n}(M; \R)
	\] 
	for every $\lambda \in \R$, we have $S_1 \cup S_2 = \R$. The sets $S_1$ and $S_2$ are closed in $\R$. Moreover, both of them are nonempty, since $\lambda \in S_1$ for all non-negative $\lambda$, and $\lambda \in S_2$ for all sufficiently small negative $\lambda$. Hence, there exists a $\lambda \in S_1 \cap S_2$. But now $\omega_1 + \lambda \omega_2 = 0$ where $\lambda \in \R \subset \K$, which shows that $\omega_1$ and $\omega_2$ are linearly dependent. The claim follows.
\end{proof}

We now show that, for conformally formal structures $[g]$, the algebra $\pharm{g}{*}(M; \K)$ satisfies the key property which was used in the proof of the main results of \cite{Kangasniemi_CohomBound}. Its counterpart in the theory of formal manifolds is \cite[Lemma 4]{Kotschick_duke}, which states that for a formal Riemannian metric, the inner product of harmonic forms is constant. The conformally formal counterpart is not quite as strong, but still implies a significant amount of rigidity provided by our assumption.

\begin{lemma}\label{lem:abs_value_rigidity}
	Let $M$ be a closed, connected, oriented, smooth $n$-manifold, let $[g]$ be a bounded conformal structure on $M$, and let $\K \in \{\R, \C\}$. Suppose that $[g]$ is conformally $\K$-formal. Then for every measurable Riemannian metric $g \in [g]$, and every choice of $\pharm{g}{n}(M; \K)$ such that $\pharm{g}{*}(M; \K)$ is an algebra, there exists a non-negative measurable real function $\rho_g \colon M \to [0, \infty)$ with the following property: given any $\omega_1, \omega_2 \in \pharm{g}{k}(M; \K)$ for $k > 0$, we have
	\[
		\ip{\omega_1}{\omega_2}_g = A \rho_g^{2k}
	\]
	a.e.\ on $M$ for some constant $A \in \K$. In particular, for any $\omega \in \pharm{g}{k}(M; \K)$ for $k > 0$, we have
	\[
		\abs{\omega}_g = B \rho_g^k
	\]
	a.e.\ on $M$ for some non-negative real constant $B \in [0, \infty)$.
\end{lemma}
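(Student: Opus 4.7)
The plan is to exploit the fact that the algebra structure forces $\pharm{g}{n}(M; \K)$ to be at most one-dimensional over $\K$, and then to reduce the norm and inner product of any intermediate-degree form to a multiple of a single non-negative generator by wedging with the conformal Hodge duals provided by Lemma~\ref{lem:conformal_hodge}.

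First I would note that $\pharm{g}{n}(M; \K)$ is a $\K$-subspace of $\pharmgeq{g}{n}(M; \K)$, being the top-degree piece of a $\K$-algebra, so Lemma~\ref{lem:one_sign_vector_spaces} yields $\dim_\K \pharm{g}{n}(M; \K) \leq 1$. If this space is trivial, set $\rho_g \equiv 0$; otherwise pick a generator, which by definition of $\pharmgeq{g}{n}(M; \K)$ may be written as $h \vol_g$ for some measurable $h \colon M \to [0, \infty)$, and define $\rho_g = h^{1/n}$. The key computation is that for $0 < k < n$ and $\omega \in \pharm{g}{k}(M; \K)$, Lemma~\ref{lem:conformal_hodge} places $\abs{\omega}_g^{(n/k)-2} \hodge_g \omega$ into $\pharm{g}{n-k}(M; \K)$, so the algebra property gives
\[
	\omega \wedge \abs{\omega}_g^{\frac{n}{k}-2} \hodge_g \omega
	= \abs{\omega}_g^{\frac{n}{k}} \vol_g
	\in \pharm{g}{n}(M; \K).
\]
Since the left-hand side is a non-negative real multiple of $\vol_g$, it must equal $c h \vol_g$ for some $c \geq 0$; taking $n$-th roots delivers $\abs{\omega}_g = c^{k/n} \rho_g^k$, so the formula holds with $B = c^{k/n}$. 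The case $k = n$ is immediate from $\omega \in \K h \vol_g$, and the degenerate case $\pharm{g}{n}(M; \K) = \{0\}$ forces $\omega = 0$ and the formula holds with $B = 0$.

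For the inner product formula the same trick applied to the pair $(\omega_1, \omega_2)$ produces
\[
	\omega_1 \wedge \abs{\omega_2}_g^{\frac{n}{k}-2} \hodge_g \omega_2
	= \abs{\omega_2}_g^{\frac{n}{k}-2} \ip{\omega_1}{\omega_2}_g \vol_g
	\in \pharm{g}{n}(M; \K),
\]
an element of the form $\mu h \vol_g$ for some $\mu \in \K$. On the set $\{\rho_g > 0\}$ the norm formula gives $\abs{\omega_2}_g = B_2 \rho_g^k$ with $B_2 > 0$, so one may divide by $\abs{\omega_2}_g^{(n/k)-2}$ and conclude $\ip{\omega_1}{\omega_2}_g = A \rho_g^{2k}$ with $A = \mu B_2^{2 - n/k}$; on the complement $\omega_2 = 0$ and both sides vanish. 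I expect the main technical nuisance to be the book-keeping around the zero set of $\omega_2$, where $\abs{\omega_2}_g^{(n/k)-2}$ has to be interpreted through the convention already fixed in the definition of $(n/k)$-harmonicity; apart from that, the proof is a direct application of Lemmas~\ref{lem:one_sign_vector_spaces} and~\ref{lem:conformal_hodge} together with the algebra hypothesis.
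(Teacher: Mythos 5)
Your proof is correct and follows essentially the same route as the paper's: deduce $\dim_\K \pharm{g}{n}(M;\K) \leq 1$ from Lemma~\ref{lem:one_sign_vector_spaces}, define $\rho_g$ from a non-negative generator, then wedge a $k$-form with the conformal Hodge dual supplied by Lemma~\ref{lem:conformal_hodge}\eqref{enum:pharm_prop_confhodge} and use closure under $\wedge$ to land in $\pharm{g}{n}(M;\K)$. The only cosmetic differences are that you wedge $\omega_1$ against the dual of $\omega_2$ while the paper wedges $\omega_2$ against the dual of $\omega_1$, and that you handle the zero set $\{\rho_g = 0\}$ explicitly where the paper absorbs it into the algebraic manipulation (including the trivial case $\omega_2 = 0$, which you flag but should dispatch by noting both sides then vanish with $A = 0$).
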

\begin{proof}
	Fix a $g \in [g]$ and a selection of $\pharm{g}{n}(M; \K)$. By Lemma \ref{lem:one_sign_vector_spaces}, we have $\dim \pharm{g}{n}(M; \K) \in \{0, 1\}$. In either case, there exists a measurable function $\rho_g \colon M \to [0, \infty)$ such that every element of $\eta \in \pharm{g}{n}(M; \K)$ is of the form
	\[
		\eta = C \rho_g^n \vol_g
	\]
	for some $C \in \K$. Indeed, if $\dim \pharm{g}{n}(M; \K) = 0$, this holds by default for any function $\rho$ since the only element in $\pharm{g}{n}(M; \K)$ is the zero function. And if $\dim \pharm{g}{n}(M; \K) = 1$, we may select any $\eta \in \pharm{g}{n}(M; \K) \setminus \{0\}$ and define $\rho_g = (\lambda (\hodge_g \eta))^{1/n}$, where $\lambda$ is the unique unit scalar in $\K$ for which $\lambda (\hodge_g \eta_x) \in \R$ and $\lambda (\hodge_g \eta_x) \geq 0$ for a.e.\ $x \in M$.
	
	In particular, our claim now holds for $\rho_g$ in the case $k = n$. Suppose now that $k \in \{1, \dots, n-1\}$. Let $\omega_1 \in \pharm{g}{k}(M; \K)$. By Lemma \ref{lem:conformal_hodge}, we have $\omega_1' = \abs{\omega_1}_g^{(n/k) - 2} \hodge_g \omega_1 \in \pharm{g}{n-k}(M; \K)$. Since $\pharm{g}{*}(M; \K)$ is closed under the wedge product, we hence have $\omega_1 \wedge \omega_1' \in \pharm{g}{n}(M; \K)$. But $\omega_1 \wedge \omega_1' = \abs{\omega_1}_g^{n/k} \vol_g$, and also by the selection criterion of $\rho_g$ we have $\omega_1 \wedge \omega_1' = C_1 \rho_g^n \vol_g$ for some $C_1 \in \K$. Hence,
	\[
		\abs{\omega_1}_g^{\frac{n}{k}} = C_1 \rho_g^n
	\]
	a.e.\ on $M$. Since $\abs{\omega_1}_g$ and $\rho_g$ are non-negative and real, we must have $C_1 \in \R$ and $C_1 \geq 0$, and therefore
	\[
		\abs{\omega_1}_g = C_1^\frac{k}{n} \rho_g^k
	\]
	a.e.\ on $M$.
	
	Finally, let also $\omega_2 \in \pharm{g}{k}(M; \K)$. Now we must have $\omega_2 \wedge \omega_1' \in \pharm{g}{n}(M; \K)$, and therefore $\omega_2 \wedge \omega_1' = C_2 \rho_g^n \vol_g$ for some $C_2 \in \K$. But $\omega_2 \wedge \omega_1' = \abs{\omega_1}_g^{(n/k) - 2}\ip{\omega_1}{\omega_2}_g \vol_g $ , so we obtain
	\[
		\abs{\omega_1}_g^{\frac{n}{k} - 2}\ip{\omega_1}{\omega_2}_g
		= C_2 \rho_g^n
	\]
	a.e.\ on $M$. By the previous result on $\abs{\omega_1}_g$, we obtain
	\[
		\left( C_1^\frac{k}{n} \rho_g^k \right)^{\frac{n}{k}-2}
			\ip{\omega_1}{\omega_2}_g
		= C_2 \rho_g^n
	\]
	a.e.\ on $M$, which when rearranged yields
	\[
		\ip{\omega_1}{\omega_2}_g = C_1^{\frac{2k}{n} - 1} C_2 \rho_g^{2k}
	\]
	a.e.\ on $M$. This concludes the proof.
\end{proof}

As an immediate consequence, we obtain a result that the elements of $\pharm{g}{k}(M; \K)$ for a conformally formal $g$ share the same support. We also point out here the similarity of this to the ideas behind the second main result of \cite{Kangasniemi_CohomBound}. 

\begin{cor}\label{cor:same_support}
	Let $M$ be a closed, connected, oriented, smooth $n$-manifold, and let $[g]$ be a conformally $\K$-formal bounded conformal structure on $M$, where $\K \in \{\R, \C\}$. Then all elements of $\pharm{g}{k}(M; \K) \setminus \{0\}$ for all $k \in \{1, \dots, n\}$ share the same support.	
\end{cor}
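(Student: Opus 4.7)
The plan is to read off the corollary essentially directly from Lemma \ref{lem:abs_value_rigidity}, with the only remaining task being to convert the pointwise norm identity into a statement about supports.

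First, fix any representative metric $g \in [g]$ and a choice of the space $\pharm{g}{n}(M; \K)$ for which $\pharm{g}{*}(M; \K)$ forms a $\K$-algebra under $+$ and $\wedge$; such a choice exists by the hypothesis that $[g]$ is conformally $\K$-formal. Applying Lemma \ref{lem:abs_value_rigidity} to this data produces a measurable function $\rho_g \colon M \to [0, \infty)$ with the property that every $\omega \in \pharm{g}{k}(M; \K)$, for every $k \in \{1, \dots, n\}$, satisfies $\abs{\omega}_g = B \rho_g^k$ almost everywhere on $M$ for some constant $B \in [0, \infty)$ depending on $\omega$.

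Now let $\omega \in \pharm{g}{k}(M; \K) \setminus \{0\}$ for some $k \in \{1, \dots, n\}$, and let $B \geq 0$ be the corresponding constant. Since $\omega \not\equiv 0$ almost everywhere, the identity $\abs{\omega}_g = B \rho_g^k$ forces $B > 0$. Therefore the (essential) support of $\omega$ coincides with the set $\{x \in M : \rho_g(x)^k \neq 0\}$, which in turn equals the essential support of $\rho_g$ itself, since $k \geq 1$ and $\rho_g \geq 0$. This set depends neither on $\omega$ nor on $k$, so any two nonzero elements of $\bigcup_{k=1}^n \pharm{g}{k}(M; \K)$ share the common support $\spt \rho_g$, proving the corollary.

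The argument is essentially a one-line consequence of Lemma \ref{lem:abs_value_rigidity}, so there is no real obstacle; the only minor subtlety is noting that the case $k = n$ is covered by the same statement in Lemma \ref{lem:abs_value_rigidity} and does not require separate treatment.
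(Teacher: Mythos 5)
Your proof is correct and is essentially the same argument as the paper's: both deduce $\spt\omega = \spt\rho_g$ directly from the identity $\abs{\omega}_g = B\rho_g^k$ given by Lemma~\ref{lem:abs_value_rigidity}, with the constant $B$ necessarily positive when $\omega \neq 0$. Your write-up just spells out the one-line reduction in slightly more detail.
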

\begin{proof}
	For every $\omega \in \pharm{g}{k}(M; \K) \setminus \{0\}$ with $k \in \{1, \dots, n\}$, we have $\spt \omega = \spt \rho_g$, where $\rho_g$ is provided by Lemma \ref{lem:abs_value_rigidity}. 
\end{proof}

Finally, we conclude the uniqueness of $\pharm{g}{n}(M; \K)$, which follows from methods similar to those used in the proof of Lemma \ref{lem:abs_value_rigidity}.

\begin{lemma}\label{lem:Hn_representation}
	Let $M$ be a closed, connected, oriented, smooth $n$-manifold, let $[g]$ be a bounded conformal structure on $M$, and let $\K \in \{\R, \C\}$. Suppose that $[g]$ is conformally $\K$-formal, and moreover, that $\pharm{g}{k}(M; \K) \neq \{0\}$ for some $k \in \{1, \dots, n-1\}$. Then the set $\pharm{g}{n}(M; \K)$ is unique, consisting exactly of elements of the form $C \abs{\omega}_g^{n/k} \vol_g$ for any $\omega \in \pharm{g}{k}(M; \K) \setminus \{0\}$, where $C \in \K$.
\end{lemma}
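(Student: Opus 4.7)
The plan is to combine the wedge-product construction already used in the proof of Lemma~\ref{lem:abs_value_rigidity} with the one-dimensionality bound of Lemma~\ref{lem:one_sign_vector_spaces}.

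First I would fix any $\omega \in \pharm{g}{k}(M;\K) \setminus \{0\}$ and exhibit an explicit nonzero element of $\pharm{g}{n}(M;\K)$. By Lemma~\ref{lem:conformal_hodge}\eqref{enum:pharm_prop_confhodge}, the form $\omega' = \abs{\omega}_g^{(n/k)-2} \hodge_g \omega$ lies in $\pharm{g}{n-k}(M;\K)$. Conformal $\K$-formality then ensures that $\omega \wedge \omega' = \abs{\omega}_g^{n/k}\vol_g$ belongs to $\pharm{g}{n}(M;\K)$, and this element is nonzero because $\omega$ is. Since $\pharm{g}{n}(M;\K)$ is a $\K$-vector space, every multiple $C\abs{\omega}_g^{n/k}\vol_g$ with $C \in \K$ also belongs to it.

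Next I would upgrade this inclusion to an equality via the dimension bound. Since $\pharm{g}{n}(M;\K) \subset \pharmgeq{g}{n}(M;\K)$, Lemma~\ref{lem:one_sign_vector_spaces} gives $\dim_\K \pharm{g}{n}(M;\K) \leq 1$. Combined with the nonzero element just produced, this forces $\pharm{g}{n}(M;\K)$ to be precisely the $\K$-line spanned by $\abs{\omega}_g^{n/k}\vol_g$. In particular it is uniquely determined by $[g]$ and the requirement that $\pharm{g}{*}(M;\K)$ be a $\K$-algebra.

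Finally I would observe that the description is independent of the choice of $\omega$: for any two nonzero candidates $\omega_1, \omega_2 \in \pharm{g}{k}(M;\K)$, both $\abs{\omega_i}_g^{n/k}\vol_g$ lie in the same one-dimensional $\K$-line and therefore span the same subspace. I do not anticipate a serious obstacle here, since the central computation is already contained in the proof of Lemma~\ref{lem:abs_value_rigidity}; the present lemma is essentially a packaging of that computation together with the dimension bound from Lemma~\ref{lem:one_sign_vector_spaces}.
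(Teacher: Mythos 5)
Your argument is correct and follows the paper's own proof essentially verbatim: exhibit $\abs{\omega}_g^{n/k}\vol_g = \omega \wedge \omega'$ as a nonzero element of $\pharm{g}{n}(M;\K)$ via Lemma~\ref{lem:conformal_hodge} and conformal formality, then invoke the $\dim_\K \leq 1$ bound from Lemma~\ref{lem:one_sign_vector_spaces} to conclude equality with the $\K$-line. The closing remark about independence of the choice of $\omega$ is a harmless restatement of the uniqueness just established.
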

\begin{proof}
	By our assumption, there exists a $\omega \in \pharm{g}{k}(M; \K) \setminus \{0\}$. By Lemma \ref{lem:conformal_hodge}, we have $\omega' = \abs{\omega}_{g}^{(n/k)-2} \hodge_{g} \omega \in \pharm{g}{n-k}(M; \K) \setminus \{0\}$. Hence, we obtain that $\abs{\omega}_g^{n/k} \vol_g = \omega \wedge \omega' \in \pharm{g}{n}(M; \K)$. Since $\abs{\omega}_g^{n/k} \vol_g$ is nonzero, and since $\pharm{g}{n}(M; \K)$ is at most one-dimensional due to Lemma \ref{lem:one_sign_vector_spaces}, we have $\pharm{g}{n}(M; \K) = \{C \abs{\omega}_g^{n/k} \vol_g : C \in \K \}$ and the claim follows.
\end{proof}

\section{Clifford algebras}\label{sect:Clifford_alg}

In this section, we recall the necessary prerequisite information on the Clifford product, with the main focus on how it extends the wedge product of differential forms. We then provide a more precise definition of conformal structures which are conformally formal in the Clifford sense. For a more comprehensive discussion on Clifford algebras, we refer the reader to one of the many available introductory texts, such as \cite{Garling_CliffordAlgebras} or \cite{Lounesto_CliffordAlgebras}. 

\subsection{Clifford products}

Let $V$ be a finite-dimensional vector space over $\K$. Moreover, let $b \colon V \times V \to \K$ be a bilinear form on $V$. The \emph{Clifford algebra} $\Cl(V, b)$ is the free associative unital $\K$-algebra generated by $V$ subject to the relation $v \cdot v = b(v, v)$ for $v \in V$. More formally, $\Cl(V, Q)$ is the quotient of the tensor algebra $\otimes^* V$ by the ideal generated by the set $\{ v \otimes v - b(v, v) 1 : v \in V \}$. The multiplication operation $\cdot$ of $\Cl(V, b)$ is called the \emph{Clifford product}.

In this paper, we restrict our attention to Clifford algebras induced by inner products. Suppose that $V$ is an $n$-dimensional inner product space over $\K$. If $\K = \R$, then the inner product $\ip{\cdot}{\cdot}$ on $V$ is a bilinear form, and therefore induces a Clifford algebra $\Cl(V, \ip{\cdot}{\cdot})$. The situation for $\K = \C$ is slightly more involved, since a complex inner product is conjugate linear in its second coordinate. Hence, in order to obtain a Clifford algebra $\Cl(V, \ip{\cdot}{\cdot})$ for $\K = \C$, we have to assume further structure on $V$. 

In particular, in the case $\K = \C$, we also suppose that we have selected a conjugation map $v \mapsto \overline{v}$ on $V$, which is a conjugate-linear self-map on $V$ satisfying $\overline{\overline{v}} = v$ for $v \in V$. Then the complex inner product $\ip{\cdot}{\cdot}$ yields a bilinear form $b$ on $V$ by $b(v, w) = \ip{v}{\overline{w}}$ for $v, w \in V$. Therefore, we may define $\Cl(V, \ip{\cdot}{\cdot}) = \Cl(V, b)$. We note that selecting a conjugation map on $V$ is equivalent to selecting a subspace of real elements $W \subset V$ for which $V = W \otimes \C$. In particular, the conjugation map is obtained from $W$ by $\overline{w \otimes z} = w \otimes \overline{z}$ for $w \in W, z \in \C$, while $W$ is obtained from the conjugation map as the space of self-conjugate elements of $V$.

As a vector space, the Clifford algebra $\Cl(V, \ip{\cdot}{\cdot})$ is canonically isomorphic to $\wedge^* V$. Through this isomorphism, we may consider the Clifford product $\cdot$ as an alternate product on $\wedge^* V$ different from the wedge product $\wedge$. The products $\cdot$ and $\wedge$ are moreover related in the following way: if we denote by $\dfpart{w}_k$ the $\wedge^k V$-component of an element $w \in \wedge^* V$, then for all $v_1 \in \wedge^{k_1} V$ and $v_2 \in \wedge^{k_2} V$ we have
\begin{equation}\label{eq:wedge_clifford_relation}
	v_1 \wedge v_2 = \dfpart{v_1 \cdot v_2}_{k_1 + k_2}.
\end{equation}

\begin{rem}
	For the sake of the reader less familiar with Clifford algebras, we briefly detail the identification $\Cl(V, \ip{\cdot}{\cdot}) \cong \wedge^* V$ through the lens of an induced basis.
	
	Namely, let $v, w \in V$. The space $V$ is naturally contained in $\Cl(V, \ip{\cdot}{\cdot})$. Moreover, the defining relation $v \cdot v = \ip{v}{\overline{v}}$ for elements of $V$ can be rewritten in the form
	\begin{equation}\label{eq:clifford_mult_relation}
		v \cdot w + w \cdot v = \ip{v}{\overline{w}} + \ip{w}{\overline{v}}.
	\end{equation}
	Indeed, this follows from the relations $(v+w) \cdot (v+w) = \ip{v+w}{\overline{v+w}}$, $v \cdot v = \ip{v}{\overline{v}}$, and $w \cdot w = \ip{w}{\overline{w}}$, along with the use of the corresponding distributive laws for the inner and Clifford products.

	Suppose then that $B = \{e_1, \dots, e_n\}$ is an orthonormal basis of real elements of $V$. Here, we call an element $v \in V$ real if $\overline{v} = v$. Let $i, j \in \{1, \dots, n\}$ and $i \neq j$. Then we have $\ip{e_i}{\overline{e_i}} = \ip{e_i}{e_i} = 1$. Consequently,
	\begin{equation}\label{eq:clifford_basis_power}
		e_i \cdot e_i = 1.
	\end{equation}
	Moreover, by \eqref{eq:clifford_mult_relation} we have $e_i \cdot e_j + e_j \cdot e_i = \ip{e_i}{\overline{e_j}} + \ip{e_j}{\overline{e_i}} = \ip{e_i}{e_j} + \ip{e_j}{e_i} = 0$. Therefore,
	\begin{equation}\label{eq:clifford_basis_anticommute}
		e_i \cdot e_j = - e_j \cdot e_i.
	\end{equation}
	
	By the two rules \eqref{eq:clifford_basis_power} and \eqref{eq:clifford_basis_anticommute}, any arbitrarily large product of elements of $B$ can be reduced into the form $\pm e_{i_1} \cdots e_{i_k}$, where $i_1 < \dots < i_k$. It then follows from the bilinearity of the Clifford product that any arbitrarily large product of elements of $V$ is a linear combination of elements $e_{i_1} \cdots e_{i_k}$ as above. In fact, such elements form a basis of $\Cl(V, \ip{\cdot}{\cdot})$, which we call the \emph{induced basis} of $B$; for details, see e.g.\ \cite[Sections 5.1-5.2]{Garling_CliffordAlgebras}.
	
	We may therefore identify $\Cl(V, \ip{\cdot}{\cdot})$ with $\wedge^* V$ as a vector space by identifying $e_{i_1} \cdots e_{i_k}$ with $e_{i_1} \wedge \dots \wedge e_{i_k}$ for $i_1 < \dots < i_k$. The relation \eqref{eq:wedge_clifford_relation} between the wedge and Clifford products becomes clear from this identification. Indeed, if two basis elements $e_{i_1} \cdots e_{i_k}$ and $e_{j_1} \cdots e_{j_l}$ have no common factors $e_i$, then the anti-commutativity rule \eqref{eq:clifford_basis_anticommute} yields that their wedge and Clifford products remain the same. On the other hand, if a common factor $e_i$ exists, then the wedge product vanishes, and the Clifford product is of lower order than $k+l$ due to \eqref{eq:clifford_basis_power}.
	
	The above construction of the identification $\Cl(V, \ip{\cdot}{\cdot}) \cong \wedge^* V$ depends on the choice of basis $B$. However, every choice of orthonormal real basis $B$ in fact results in the same underlying map, and hence the identification is canonical. For a coordinate-free approach to this isomorphism, see e.g.\ the proof of \cite[Theorem 5.2.1]{Garling_CliffordAlgebras}.
\end{rem}

Due to its importance for our applications, we point out the following simple norm estimate for the Clifford product. We provide a short proof for the convenience of the reader.

\begin{lemma}\label{lem:Clifford_holder}
	Let $(V, \ip{\cdot}{\cdot})$ be an $n$-dimensional inner product space over $\K$, where $\K \in \{\R, \C\}$. If $\K = \C$, then suppose moreover that $V$ has a fixed conjugation map $v \mapsto \overline{v}$. Let $\cdot$ be the Clifford product induced by $\ip{\cdot}{\cdot}$ on $\wedge^* V$, and let $\abs{\cdot}$ be the norm induced by $\ip{\cdot}{\cdot}$ on $\wedge^j V$ for all $j \in \{0, \dots, n\}$. 
	
	Then there exists a $C = C(n) > 0$ for which the following holds: for all $k, k_1, k_2 \in \{0, \dots, n\}$, and for all $v_i \in \wedge^{k_i} V$ for $i = 1, 2$, we have 
	\[
		\abs{\dfpart{v_1 \cdot v_2}_k} \leq C \abs{v_1} \abs{v_2}.
	\]
\end{lemma}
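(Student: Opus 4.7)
The plan is to reduce the estimate to a direct computation in an orthonormal real basis, where the Clifford product acts in an explicitly combinatorial way. Since the norm $\abs{\cdot}$ is basis-independent and the map $(v_1, v_2) \mapsto \dfpart{v_1 \cdot v_2}_k$ is bilinear, any bound obtained from such a basis computation automatically yields the desired inequality with a constant depending only on $n$.

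First I would fix an orthonormal real basis $\{e_1, \dots, e_n\}$ of $V$; in the complex case this is obtained by applying Gram--Schmidt to the real subspace $W \subset V$ of self-conjugate elements, using the positive-definite restriction of the bilinear form to $W$. This basis induces an orthonormal basis $\{e_I : I \subset \{1, \dots, n\}\}$ of $\wedge^* V$ as described in the preceding remark. Using the relations $e_i \cdot e_i = 1$ and $e_i \cdot e_j = -e_j \cdot e_i$ for $i \neq j$, one verifies by induction on multi-index length that $e_I \cdot e_J = \sigma(I, J) e_{I \triangle J}$ for some sign $\sigma(I, J) \in \{-1, +1\}$, where $I \triangle J$ denotes the symmetric difference.

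Expanding $v_1 = \sum_{\abs{I} = k_1} a_I e_I$ and $v_2 = \sum_{\abs{J} = k_2} b_J e_J$ with coefficients $a_I, b_J \in \K$, the $\wedge^k V$-component of the Clifford product takes the form
\[
\dfpart{v_1 \cdot v_2}_k = \sum_{\abs{L} = k} c_L e_L, \qquad c_L = \sum_{\substack{\abs{I} = k_1,\, \abs{J} = k_2 \\ I \triangle J = L}} \sigma(I, J) \, a_I b_J.
\]
Given any $L$ with $\abs{L} = k$, a pair $(I, J)$ contributing to $c_L$ is completely determined by the intersection $I \cap J \subset \{1, \dots, n\} \setminus L$, which has fixed cardinality $(k_1 + k_2 - k)/2$, together with a partition of $L$ into $I \setminus J$ and $J \setminus I$. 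Hence the number of such pairs is bounded by a constant $N = N(n)$. Applying Cauchy--Schwarz to each $c_L$ and summing, we obtain $\sum_L \abs{c_L}^2 \leq N \sum_{I, J} \abs{a_I}^2 \abs{b_J}^2 = N \abs{v_1}^2 \abs{v_2}^2$, and taking square roots yields the claim with $C = \sqrt{N}$.

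No serious obstacle arises here: once an orthonormal real basis is chosen, the Clifford product acts as a signed permutation on the induced basis, which makes the Cauchy--Schwarz estimate transparent. The only mild subtlety in the complex case is the need for a basis of real elements, so that the bilinear extension of the Clifford product is compatible with the bilinear form used to define $\Cl(V, \ip{\cdot}{\cdot})$.
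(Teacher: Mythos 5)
Your argument is correct and works, but it organizes the estimate differently from the paper's proof of Lemma \ref{lem:Clifford_holder}. The paper also expands $v_1$ and $v_2$ in the induced orthonormal basis, but then simply applies the triangle inequality to get $\abs{\dfpart{v_1 \cdot v_2}_k} \leq \sum_{I,J} \abs{a_I}\abs{b_J} = \left(\sum_I \abs{a_I}\right)\left(\sum_J \abs{b_J}\right)$ and finishes with the $\ell^1$-to-$\ell^2$ comparison, yielding $C = 2^n$; it does not track which pairs $(I, J)$ actually contribute to a given $L$. You instead exploit the structural fact $e_I \cdot e_J = \pm e_{I \triangle J}$ to count, for each fixed $L$, the (bounded) number of contributing pairs, and then apply Cauchy--Schwarz coefficient by coefficient before summing over $L$; the key observation making this work is that each pair $(I, J)$ contributes to exactly one $L = I \triangle J$, so the double sum collapses to $\abs{v_1}^2 \abs{v_2}^2$. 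This is slightly more work but gives a sharper constant (roughly $2^{n/2}$ rather than $2^n$); since the lemma only asks for some $C(n)$, both approaches are equally valid for the paper's purposes. Your handling of the complex case is also fine: the orthonormal basis of self-conjugate elements is exactly what the paper uses, so that the bilinear form defining the Clifford product agrees with the Hermitian inner product on real vectors and the induced basis of $\wedge^* V$ is orthonormal for the Hermitian norm.
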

\begin{proof}
	Let $\{e_i : i = 1, \dots, n\}$ be an orthonormal basis of self-conjugate elements of $V$, and denote by $e_I$ and $e_J$ the induced basis elements on $\wedge^{k_1} V$ and $\wedge^{k_2} V$, respectively, where $I \in \cI$ and $J \in \cJ$. Note that for all $I$ and $J$, $\dfpart{e_I \cdot e_J}_k$ is either a basis element, opposite of a basis element, or zero. We write $v_1 = \sum_{I \in \cI} a_I e_I$ and $v_2 =  \sum_{J \in \cJ} b_J e_J$. Then
	\begin{multline*}
		\abs{\dfpart{v_1 \cdot v_2}_k}
		= \abs{\sum_{I \in \cI, J \in \cJ} a_I b_J \dfpart{e_I \cdot e_J}_k}
		\leq \sum_{I \in \cI, J \in \cJ} \abs{a_I} \abs{b_J}\\
		= \left( \sum_{I \in \cI} \abs{a_I}\right)
			\left( \sum_{J \in \cJ} \abs{b_J}\right)
		\leq \sqrt{\binom{n}{k_1} \binom{n}{k_2}} \abs{v_1} \abs{v_2}
		\leq 2^n \abs{v_1} \abs{v_2}.
	\end{multline*}
\end{proof}

\subsection{Clifford products and conformal structures}

Suppose then that $M$ is a closed, connected, oriented smooth $n$-manifold. Let $g$ be a measurable Riemannian metric on $M$. Then for almost every $x \in M$, the inner product $\ip{\cdot}{\cdot}_g$ on $T^*_x M \otimes \K$ induces a Clifford product on $\wedge^* T^*_x M \otimes \K$, which we denote by $\cdot_g$ to emphasize the dependence on $g$. Consequently, if $\omega_1, \omega_2 \in \Gamma(\wedge^* M; \K)$, then we obtain a measurable $\omega_1 \cdot_g \omega_2 \in \Gamma(\wedge^* M; \K)$. We note that if $\K = \C$, then the real subspace $T^*_x M \subset T_x^* M \otimes \C$ provides us with the conjugation map used to define $\cdot_g$. 

Consider now a metric $\rho^2 g$ which is conformally equivalent to $g$. Then the conformal change of metric affects the Clifford product in the following way: if $\alpha \in \Gamma(\wedge^l M; \K)$ and $\beta \in \Gamma(\wedge^m M; \K)$, then
\begin{equation}\label{eq:clifford_conformal}
	\dfpart{\alpha \cdot_{\rho^2 g} \beta}_k 
	= \rho^{k - l - m} \dfpart{\alpha \cdot_{g} \beta}_k.
\end{equation}
Indeed, this easily follows from \eqref{eq:clifford_basis_power} and \eqref{eq:clifford_basis_anticommute} along with the fact that, if $\{\eps_i\}$ is an $\ip{\cdot}{\cdot}_{g}$-orthonormal basis of self-conjugate elements of $T^*_x M \otimes \K$, then $\{\rho \eps_i\}$ is an $\ip{\cdot}{\cdot}_{\rho^2g}$-orthonormal basis of self-conjugate elements of $T^*_x M \otimes \K$.

Hence, the Clifford product $\cdot_g$ depends on the choice of $g \in [g]$. However, as discussed in the introduction, this dependence may be eliminated by introducing a non-linear version of the product with a suitable scaling term, which we denote by $\odot_g$. Namely, suppose that $\alpha \in \Gamma(\wedge^l M; \K)$ and $\beta \in \Gamma(\wedge^m M; \K)$. If $l = m = 0$, we define $\alpha \odot_g \beta = \alpha \beta$ using the usual product on $\K$. Otherwise, we define $\alpha \odot_g \beta$ by
\begin{equation}\label{eq:scaled_clifford_def}
	\alpha \odot_g \beta = 1 + \sum_{k=1}^n 
		\frac{\dfpart{\alpha \cdot_g \beta}_k}
			{\abs{\dfpart{\alpha \cdot_g \beta}_k}_g^{\frac{l+m-k}{l+m}}}.
\end{equation}
Note that, in case $\dfpart{\alpha \cdot_g \beta}_k = 0$, we interpret the resulting $0/0$-expression in \eqref{eq:scaled_clifford_def} as $0$, since the power in the denominator is smaller than 1. The exception to this would be if $k = 0$, and for this reason we have directly defined the result for $k = 0$ to always be 1 in order to avoid issues. We then extend $\odot_g$ to $\Gamma(\wedge^* M; \K) \times \Gamma(\wedge^* M; \K)$ by summing over the results for all components $\Gamma(\wedge^l M; \K) \times \Gamma(\wedge^m M; \K)$. 

We observe that, if $\alpha \in \Gamma(\wedge^l M; \K)$ and $\beta \in \Gamma(\wedge^m M; \K)$ with $m \neq 0$ or $l \neq 0$, then
\begin{multline*}
	\alpha \odot_{\rho^2 g} \beta 
	= 1 + \sum_{k=1}^n \frac{\rho^{k-l-m}\dfpart{\alpha \cdot_g \beta}_k}
	{\left(\rho^{-k}\abs{\rho^{k-l-m}\dfpart{\alpha \cdot_g \beta}_k}_g
		\right)^{\frac{l+m-k}{l+m}}}\\
	= 1 + \sum_{k=1}^n \frac{\rho^{k-l-m}\dfpart{\alpha \cdot_g \beta}_k}
	{\rho^{k-l-m}\abs{\dfpart{\alpha \cdot_g \beta}_k}_g^{\frac{l+m-k}{l+m}}}
	= \alpha \odot_{g} \beta.
\end{multline*}
Moreover, if $\alpha \in \Gamma(\wedge^0 M; \K)$ and $\beta \in \Gamma(\wedge^0 M; \K)$, then $\alpha \odot_{\rho^2 g} \beta = \alpha \beta = \alpha \odot_g \beta$.  Hence, the operation $\odot_g$ depends only on the conformal structure $[g]$, and not on the choice of metric $g$ within $[g]$.

With this, we state the full definition of conformal formality in the Clifford sense, including both the cases $\K = \R$ and $\K = \C$.

\begin{defn}\label{def:conformally_clifford_formal}
	Let $M$ be a closed, connected, oriented, smooth $n$-manifold, let $[g]$ be a bounded conformal structure on $M$, and let $\K \in \{\R, \C\}$. We say that $[g]$, as well as the manifold $M$ admitting $[g]$, is \emph{conformally $\K$-formal in the Clifford sense}, if there exists a set $\pharm{g}{n}(M; \K) \subset \pharmgeq{g}{n}(M; \K)$ for which $\pharm{g}{*}(M; \K)$ is closed under both addition and the scaled Clifford product $\odot_g$. In case $\K = \R$, we may again also omit the $\R$ from the term. 
\end{defn}

If $\alpha \in \Gamma(\wedge^l M; \K)$ and $\beta \in \Gamma(\wedge^m M; \K)$, then $\dfpart{\alpha \odot_g \beta}_{l+m} = \alpha \wedge \beta$. Hence, if $[g]$ is conformally $\K$-formal in the Clifford sense, it is also conformally $\K$-formal in the sense of Section \ref{sect:conf_formality}. Thus, the entire theory discussed in Section \ref{sect:conf_formality} also applies to structures which are conformally formal in the Clifford sense.

\begin{rem}
	Note that the scaling used in \eqref{eq:scaled_clifford_def} is not the only possible option. Indeed, one alternative approach would have been to sum over the terms
	\[
	\abs{\alpha}^{(k-l-m)/(2l)}\abs{\beta}^{(k-l-m)/(2m)}\dfpart{\alpha \cdot_g \beta}_k,
	\]
	and many other similar conformally invariant approaches can also be devised. However, for the most part these different approaches to scaling yield the same structures $[g]$ which are conformally $\K$-formal in the Clifford sense, essentially due to Lemma \ref{lem:abs_value_rigidity}.
\end{rem}

We end this section by showing that, instead of verifying that $\pharm{g}{*}(M; \K)$ is closed under $\odot_g$, it suffices to verify that a basis is mapped inside $\pharm{g}{*}(M; \K)$ by $\odot_g$. In the case of the wedge product this is immediately clear due to bilinearity, but since $\odot_g$ is non-linear, an extra argument is required.

\begin{lemma}\label{lem:clifford_nonlinear_basis_lemma}
	Let $M$ be a closed, connected, oriented, smooth $n$-manifold, let $[g]$ be a bounded conformal structure on $M$, and let $\K \in \{\R, \C\}$. Suppose that there exists a vector space $\pharm{g}{n}(M; \K) \subset \pharmgeq{g}{n}(M; \K)$ for which $\pharm{g}{*}(M; \K)$ is closed under addition. Moreover, suppose that there exists a finite graded \linebreak basis $\{\omega_i \in \pharm{g}{k}(M; \K) : i \in I, 0 \leq k \leq n\}$ of $\pharm{g}{*}(M; \K)$ such that $\omega_i \odot_g \omega_j \in \pharm{g}{*}(M; \K)$ for all $i, j \in I$. Then $[g]$ is conformally formal in the Clifford sense.
\end{lemma}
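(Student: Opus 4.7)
The approach splits into two stages. In the first, I would bootstrap the basis-wise closure under $\odot_g$ into closure of $\pharm{g}{*}(M;\K)$ under $\wedge$, placing us in the setting of Section \ref{sect:conf_formality}. Since $\pharm{g}{*}(M;\K) = \bigoplus_k \pharm{g}{k}(M;\K)$ is graded by definition, the hypothesis $\omega_i \odot_g \omega_j \in \pharm{g}{*}(M;\K)$ forces each homogeneous component $\dfpart{\omega_i \odot_g \omega_j}_k$ to lie in $\pharm{g}{k}(M;\K)$. In the top degree $k = k_i + k_j$, the exponent $(l+m-k)/(l+m)$ in \eqref{eq:scaled_clifford_def} vanishes, so that component is exactly $\omega_i \wedge \omega_j$. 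Bilinearity of $\wedge$ then upgrades this basis-wise statement to full closure of $\pharm{g}{*}(M;\K)$ under $\wedge$, so $[g]$ is conformally $\K$-formal in the sense of Section \ref{sect:conf_formality}, and I may invoke Lemma \ref{lem:abs_value_rigidity} to obtain a function $\rho_g \colon M \to [0, \infty)$ controlling all norms within $\pharm{g}{*}(M;\K)$.

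The second stage uses the rigidity provided by $\rho_g$ to cancel the non-linear scaling in $\odot_g$. Because $\odot_g$ extends bidegree-wise, it suffices to treat homogeneous $\eta_1 \in \pharm{g}{l}(M;\K)$ and $\eta_2 \in \pharm{g}{m}(M;\K)$ with $l+m > 0$. Writing $\eta_1 = \sum_i a_i \omega_i$ and $\eta_2 = \sum_j b_j \omega_j$ in terms of the basis, and setting $\xi_{i,j,k} := \dfpart{\omega_i \odot_g \omega_j}_k \in \pharm{g}{k}(M;\K)$, a short manipulation of the definition of $\odot_g$ together with the identity $\abs{\xi_{i,j,k}}_g = B_{i,j,k} \rho_g^k$ from Lemma \ref{lem:abs_value_rigidity} yields
\begin{align*}
	\abs{\dfpart{\omega_i \cdot_g \omega_j}_k}_g &= C_{i,j,k} \rho_g^{l+m}, \\
	\dfpart{\omega_i \cdot_g \omega_j}_k &= D_{i,j,k} \rho_g^{l+m-k} \xi_{i,j,k},
\end{align*}
for some non-negative real constants $C_{i,j,k}, D_{i,j,k}$.

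Bilinearity of $\cdot_g$ then gives $\dfpart{\eta_1 \cdot_g \eta_2}_k = \rho_g^{l+m-k} \tilde{\xi}_k$, where $\tilde{\xi}_k := \sum_{i,j} a_i b_j D_{i,j,k} \xi_{i,j,k}$ lies in $\pharm{g}{k}(M;\K)$ since this space is a $\K$-vector space. A second application of Lemma \ref{lem:abs_value_rigidity} yields $\abs{\dfpart{\eta_1 \cdot_g \eta_2}_k}_g = \tilde{B}_k \rho_g^{l+m}$, so substituting into \eqref{eq:scaled_clifford_def} causes the factor $\rho_g^{l+m-k}$ to cancel exactly against the non-linear denominator, leaving $\dfpart{\eta_1 \odot_g \eta_2}_k$ as a scalar multiple of $\tilde{\xi}_k \in \pharm{g}{k}(M;\K)$; the degenerate case $\tilde{B}_k = 0$ forces $\tilde{\xi}_k = 0$ and is absorbed by the $0/0=0$ convention in \eqref{eq:scaled_clifford_def}. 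The degree-zero part is the constant $1 \in \pharm{g}{0}(M;\K)$ and the $l=m=0$ case is immediate. The main obstacle throughout is the non-linearity of $\odot_g$, which precludes a direct bilinear extension argument; it is overcome by the multiplicative norm rigidity of Lemma \ref{lem:abs_value_rigidity}, which converts the awkward non-linear scaling into a cleanly cancelling power of $\rho_g$.
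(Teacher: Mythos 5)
Your proposal is correct and follows essentially the same route as the paper's own proof: first deduce wedge-closure (hence conformal $\K$-formality in the ordinary sense) from the top-degree components $\dfpart{\omega_i \odot_g \omega_j}_{k_i+k_j}$, then use Lemma \ref{lem:abs_value_rigidity} to write $\abs{\cdot}_g$-norms of the homogeneous Clifford components as multiples of powers of a single function $\rho_g$, apply bilinearity of $\cdot_g$ to reduce a general product to the basis case, and reapply the rigidity lemma so the normalizing power of $\rho_g$ cancels against the non-linear denominator in \eqref{eq:scaled_clifford_def}. The bookkeeping constants and the explicit remark that the components of $\omega_i \odot_g \omega_j$ land in the graded pieces $\pharm{g}{k}(M;\K)$ by direct-sum grading are minor presentational differences, not a different argument.
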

\begin{proof}
	Let $i, j \in I$, and let $\omega_i \in \pharm{g}{l}(M; \K)$ and $\omega_j \in \pharm{g}{m}(M; \K)$. We then have $\omega_i \wedge \omega_j = \dfpart{\omega_i \odot_g \omega_j}_{l+m} \in \pharm{g}{*}(M; \K)$. Since the wedge product is bilinear, it follows that $\pharm{g}{*}(M; \K)$ is closed under the wedge product, and therefore $[g]$ is conformally $\K$-formal in the non-Clifford sense.
	
	Let then $\alpha \in \pharm{g}{l}(M; \K)$, $\beta \in \pharm{g}{m}(M; \K)$, and let $k \in \{0, \dots, n\}$. It suffices to show that $\dfpart{\alpha \odot_g \beta}_k \in \pharm{g}{k}(M; \K)$. Since elements of $\pharm{g}{0}(M; \K)$ are constant and $\pharm{g}{*}(M; \K)$ is closed under scalar multiplication, we may assume that $(l, m) \neq (0, 0)$. Moreover, since $\dfpart{\alpha \odot_g \beta}_0$ is the constant function 1 by definition, we may also assume $k > 0$.
	
	If $\omega_i \in \pharm{g}{l}(M; \K)$ and $\omega_j \in \pharm{g}{m}(M; \K)$ are basis elements for $i, j \in I$, then by our assumption we have $\omega_{ij} = \dfpart{\omega_i \odot_g \omega_j}_k \in \pharm{g}{k}(M; \C)$. Moreover, from the definition of $\odot_g$ we see that $\abs{\omega_{ij}}_g = \smallabs{\langle \omega_1 \cdot_{g} \omega_2 \rangle_k}^{k/(l+m)}_g$. Hence, multiplying the equation $\dfpart{\omega_1 \odot_{g} \omega_2}_k = \omega_{ij}$ on both sides by $\abs{\omega_{ij}}^{(l+m)/k - 1}_g$, we obtain
	\[
		\dfpart{\omega_1 \cdot_{g} \omega_2}_k 
		= \abs{\omega_{ij}}_g^{\frac{l+m}{k} - 1} \omega_{ij}.
	\]
	
	Consequently, we may use the bilinearity of the Clifford product, and write
	\[
		\dfpart{\alpha \cdot_{g} \beta}_k 
		= \sum_{i, j \in I} a_{ij} \abs{\omega_{ij}}_g^{\frac{l+m}{k} - 1} \omega_{ij},
	\]
	where $a_{ij} \in \K$ and $\omega_{ij} \in \pharm{g}{k}(M; \C)$. Now, since we have already shown $[g]$ to be conformally $\K$-formal in the non-Clifford sense, it follows from Lemma \ref{lem:abs_value_rigidity} that for all indices $i, j \in I$, we have $\abs{\omega_{ij}}_g = C_{ij} \rho^{k}$ for some $C_{ij} \in [0, \infty)$ and a fixed function $\rho \colon M \to [0, \infty)$. Since $\pharm{g}{k}(M; \C)$ is closed under addition, we conclude that
	\[
		\rho^{k-l-m} \dfpart{\alpha \cdot_{g} \beta}_k
		= \sum_{i, j \in I} a_{ij} C_{ij} \omega_{ij}
		\in \pharm{g}{k}(M; \C).
	\]
	
	By again using Lemma \ref{lem:abs_value_rigidity}, we obtain a constant $C \in [0, \infty)$ for which $\smallabs{\rho^{k-l-m} \langle\alpha \cdot_{g} \beta\rangle_k}_g = C \rho^k$. It therefore follows that $\smallabs{\langle\alpha \cdot_{g} \beta\rangle_k}_g = C \rho^{l+m}$. Now, we finally obtain that
	\begin{multline*}
		\dfpart{\alpha \odot_{g} \beta}_k
		= \frac{\dfpart{\alpha \cdot_{g} \beta}_k}
			{\left( C \rho^{l+m} \right)^{\frac{l+m-k}{l+m}}}
		= C^{\frac{k}{l+m} - 1} 
			\left( \rho^{k-l-m} \dfpart{\alpha \cdot_{g} \beta}_k \right)
		\in \pharm{g}{k}(M; \C).
	\end{multline*}
	Hence, we conclude that $\pharm{g}{*}(M; \K)$ is closed under $\odot_{g}$, and therefore $g$ is conformally $\K$-formal in the Clifford sense.
\end{proof}

%%%%%%%%%%%%%%%%%%%%%%%%%%%%%%%%%%%%%%%%%%%%%%%%%%%%%%%%%%%%%%%%%%%%%%%%%%%%%

\section{Hodge theory for $p$-harmonic forms}\label{sect:Hodge_theory}

In this section, we recall several regularity results of $\pharm{g}{k}(M; \K)$ based on non-linear Hodge theory. Our main reference for the results is the paper of Iwaniec, Scott, and Stroffolini \cite{Iwaniec-Scott-Stroffolini}. Note that \cite{Iwaniec-Scott-Stroffolini} is written for $\K = \R$. However, for most of the results we require from \cite{Iwaniec-Scott-Stroffolini}, an exposition of how their proof also works in the case $\K = \C$ can already be found in \cite[Section 4.3]{Kangasniemi_CohomBound}. For the remaining smaller results not covered in \cite{Kangasniemi_CohomBound}, we give an exposition in this chapter on how their proofs also apply when $\K = \C$.

\subsection{Existence and higher integrability results}

Let $M$ be a closed, connected, oriented Riemannian manifold, with Riemannian metric $g_0$. Let $p \in (1, \infty)$, and let $q = p/(p-1)$. A \emph{(nonhomogeneous) Hodge system} of exponent $p$, in the language of \cite{Iwaniec-Scott-Stroffolini}, is a family of differential equations on $M$ of the form
\begin{align}
	d \phi &= 0, \label{eq:ISS_diffeq_d}\\
	d^* \psi &= 0, \label{eq:ISS_diffeq_dstar}\\
	\psi + \psi_0 &= \cG (\phi + \phi_0), \label{eq:ISS_diffeq_conj}
\end{align}
where $\phi_0 \in L^p(\wedge^k M; \K)$, $\psi_0 \in L^q(\wedge^k M; \K)$ are initial data, and $\phi \in L^p(\wedge^k M; \K)$, $\psi \in L^q(\wedge^k M; \K)$ are the solutions. Here, $d^*$ is the usual codifferential $d^* = (-1)^{nk + 1} \hodge_{g_0} d \hodge_{g_0}$, and $\cG \colon L^p(\wedge^k M; \K) \to L^q(\wedge^k M; \K)$ is a point-wise defined measurable operator satisfying three conditions:
\begin{align}
	\abs{\cG (\xi) - \cG (\zeta)}_{g_0} \label{eq:ISS_cond_1}
		&\leq C (\abs{\xi}_{g_0} + \abs{\zeta}_{g_0})^{p-2} 
			\abs{\xi - \zeta}_{g_0},\\
	\Re \ip{\cG (\xi) - \cG (\zeta)}{\xi - \zeta}_{g_0} \label{eq:ISS_cond_2}
		&\geq C^{-1} (\abs{\xi}_{g_0} + \abs{\zeta}_{g_0})^{p-2} 
			\abs{\xi - \zeta}_{g_0}^2,\\
	\cG(t\xi) &= t \abs{t}^{p-2} \cG(\xi) \label{eq:ISS_cond_3},
\end{align}
for $\xi, \zeta \in (\wedge^k T^*_x M)\otimes \K$, $t \in \K$, $x \in M$.

Let $[g]$ be a bounded conformal structure on $M$, and let $k \in \{1, \dots, n-1\}$. The equations of $\pharm{g}{k}(M; \K)$ given in \eqref{eq:diffeq_d}-\eqref{eq:diffeq_dstar} are then an example of a Hodge system for a conformally invariant operator $\cG = \cG_k$, which we define by
\begin{equation}\label{eq:Aharm_ISS_Interpretation}
	\cG_k(\xi) = \hodge_{g_0} \bigl(\abs{\xi}_g^{\frac{n}{k} - 2} \hodge_g \xi\bigr).
\end{equation}

We point out that $\cG_k$ satisfies \eqref{eq:ISS_cond_1}-\eqref{eq:ISS_cond_3} for $p = n/k$, and hence elements of $\pharm{g}{k}(M; \K)$ are solutions $\phi$ of the equations \eqref{eq:ISS_diffeq_d}-\eqref{eq:ISS_diffeq_conj} for data $\phi_0 = \psi_0 = 0$. The fact that that $\cG_k$ satisfies \eqref{eq:ISS_cond_1}-\eqref{eq:ISS_cond_3} follows from the inequalities
\begin{gather}
	\label{eq:ip_p_ineq_1}
	\big\lvert\abs{v}^{p-2}v - \abs{w}^{p-2} w\big\rvert
	\leq C \left( \abs{v} + \abs{w}\right)^{p-2} \abs{v-w} \text{ and}\\
	\label{eq:ip_p_ineq_2}
	\big\langle\abs{v}^{p-2}v - \abs{w}^{p-2} w, v-w \big\rangle
	\geq C^{-1} (\abs{v} + \abs{w})^{p-2} 
	\abs{v-w}^2,
\end{gather}
which hold in every real inner product space $(V, \ip{\cdot}{\cdot})$ for all $v, w \in V$ and $p \in (1, \infty)$, where $C$ depends only on $p$. For complex inner products $\ip{\cdot}{\cdot}$, note that $\Re \ip{\cdot}{\cdot}$ is a real inner product which induces the same norm, and the above estimates may therefore be applied to it. Since $\cG_k$ is conformally invariant, we may assume that \eqref{eq:norm_comparison} holds for $g$, and hence \eqref{eq:ISS_cond_1}-\eqref{eq:ISS_cond_3} follow from \eqref{eq:ip_p_ineq_1}, \eqref{eq:ip_p_ineq_2} applied to $\Re \ip{\cdot}{\cdot}_g$.

We now state the existence and higher integrability result of solutions. For a (long and technical) proof in the case $\K = \R$, see the more general results given in \cite[Theorems 8.4 and 8.8]{Iwaniec-Scott-Stroffolini}. For $\K = \C$, the same proof remains valid with extremely minor changes; see the in-depth discussion in \cite[Section 4.3]{Kangasniemi_CohomBound}. In preparation for the statement, we define the spaces
\begin{align*}
	L^{p, \sharp}(\wedge^k M) 
		&= \bigcup_{q > p} L^{q}(\wedge^k M), \quad \text{and}\\
	L^{p, \flat}(\wedge^k M) 
		&= \bigcap_{q < p} L^{q}(\wedge^k M).
\end{align*}

\begin{thm}\label{thm:ISS_higherint}
	Let $M$ be a closed, connected, oriented Riemannian $n$-mani\-fold with Riemannian metric $g_0$, and let $[g]$ be a bounded measurable conformal structure on $M$. Let $k \in \{1, \dots, n-1\}$, let $\K \in \{\R, \C\}$, and let $\cG_k$ be as in \eqref{eq:Aharm_ISS_Interpretation}. Then for all
	\begin{align*}
		\phi_0 &\in L^{\frac{n}{k}}(\wedge^k M; \K) &\text{and}&&
		\psi_0 &\in L^{\frac{n}{n-k}}(\wedge^k M; \K),
	\end{align*}
	there exist unique
	\begin{align*}
		\phi &\in L^{\frac{n}{k}}(\wedge^k M; \K) &\text{and}&&
		\psi &\in L^{\frac{n}{n-k}}(\wedge^k M; \K),
	\end{align*}
	for which $\phi = d\alpha$ for some $\alpha \in L^{n/k}(\wedge^{k-1} M; \K)$ and $(\phi, \psi)$ satisfy \eqref{eq:ISS_diffeq_d}-\eqref{eq:ISS_diffeq_conj} for data $\phi_0, \psi_0, \cG_k$.
	
	If, moreover, $\phi_0 \in L^{n/k, \sharp}(\wedge^k M; \K)$ and $\psi_0 \in L^{n/(n-k), \sharp}(\wedge^k M; \K)$, then also $\phi \in L^{n/k, \sharp}(\wedge^k M; \K)$ and $\psi \in L^{n/(n-k), \sharp}(\wedge^k M; \K)$.
\end{thm}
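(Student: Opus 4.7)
The plan is to follow the variational approach of Iwaniec-Scott-Stroffolini \cite{Iwaniec-Scott-Stroffolini}, incorporating the complex-coefficient modifications of \cite[Section 4.3]{Kangasniemi_CohomBound}. Since $\phi = d\alpha$ is sought exact, \eqref{eq:ISS_diffeq_d} is automatic, and substituting the constitutive relation \eqref{eq:ISS_diffeq_conj} into \eqref{eq:ISS_diffeq_dstar} reduces the system to the single equation
\[
	d^*\bigl(\cG_k(d\alpha + \phi_0) - \psi_0\bigr) = 0,
\]
which is the Euler--Lagrange equation for a strictly convex functional
\[
	J(\alpha) = \int_M F(d\alpha + \phi_0)\vol_{g_0}
		- \Re\int_M \ip{\psi_0}{d\alpha}_{g_0}\vol_{g_0},
\]
where $F$ is the convex antiderivative of $\cG_k$ guaranteed by \eqref{eq:ISS_cond_3}.

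For existence I would apply the direct method on the space $\{\alpha \in L^{n/k}(\wedge^{k-1}M; \K) : d\alpha \in L^{n/k}\}$ modulo $\ker d$. Coercivity of $J$ follows from the norm comparison \eqref{eq:norm_comparison} afforded by the boundedness of $[g]$ together with H\"older, while strict convexity of $F$ comes from \eqref{eq:ISS_cond_2} expressed in terms of $\Re\ip{\cdot}{\cdot}_{g_0}$. A minimizing sequence has a weakly $L^{n/k}$-convergent subsequence, and lower semicontinuity of $J$ passes to the weak limit. Controlling an actual representative $\alpha$ in $L^{n/k}$ (and not merely $d\alpha$) is achieved by choosing $\alpha$ in the orthogonal complement of $\ker d$ under a Hodge decomposition, where the $L^p$-continuity of the Poincar\'e homotopy operator applies. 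Uniqueness is immediate from \eqref{eq:ISS_cond_2}: for two solution pairs, subtracting the constitutive relations, pairing with $\phi_1 - \phi_2$, and integrating gives
\[
	\int_M \Re\ip{\psi_1 - \psi_2}{\phi_1 - \phi_2}_{g_0}\vol_{g_0}
	\geq C^{-1}\int_M (\abs{\phi_1 + \phi_0}_{g_0} + \abs{\phi_2 + \phi_0}_{g_0})^{\frac{n}{k}-2}\abs{\phi_1 - \phi_2}_{g_0}^2\vol_{g_0},
\]
while the left-hand side vanishes by integration by parts, pairing the exact form $\phi_1 - \phi_2 = d(\alpha_1 - \alpha_2)$ against the coclosed $\psi_1 - \psi_2$; this forces $\phi_1 = \phi_2$ a.e., and $\psi_1 = \psi_2$ then follows from the constitutive relation.

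The main obstacle is the higher integrability statement, which I would obtain by a Gehring-type self-improvement argument tailored to nonlinear Hodge systems. On small balls $B \subset M$, apply the local Hodge decomposition to split $d\alpha$ into exact, coexact, and harmonic parts, each controlled in $L^p$ via elliptic estimates in $\R^n$-charts with constants depending only on $n$ and $d([g], [g_0])$. Testing the equation against cutoff-weighted translates of $\alpha$ and combining with the Poincar\'e--Sobolev inequality yields a reverse H\"older inequality
\[
	\Bigl(\frac{1}{\vol_{g_0}(B)}\int_B \abs{\phi}_{g_0}^{n/k}\vol_{g_0}\Bigr)^{k/n}
	\leq C\Bigl(\frac{1}{\vol_{g_0}(2B)}\int_{2B} \abs{\phi}_{g_0}^{s}\vol_{g_0}\Bigr)^{1/s} + D(\phi_0, \psi_0, B),
\]
for some $s < n/k$, with $D$ a local tail involving the data. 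Gehring's lemma then self-improves the exponent on the left to $n/k + \eps$ for some $\eps > 0$; iterating with $\phi_0 \in L^{n/k, \sharp}$ and $\psi_0 \in L^{n/(n-k), \sharp}$ yields $\phi \in L^{n/k, \sharp}$ globally, and the Lipschitz bound \eqref{eq:ISS_cond_1} transfers this to $\psi \in L^{n/(n-k), \sharp}$. The passage $\K = \R \to \K = \C$ requires no essentially new content: the inequalities \eqref{eq:ISS_cond_1}-\eqref{eq:ISS_cond_3} are already phrased in terms of $\Re\ip{\cdot}{\cdot}_{g_0}$, which induces the same norms as in the real case, so all estimates above go through without modification.
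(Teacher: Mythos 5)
The paper does not actually prove Theorem~\ref{thm:ISS_higherint}: it defers entirely to Iwaniec--Scott--Stroffolini \cite[Theorems 8.4 and 8.8]{Iwaniec-Scott-Stroffolini} for the real case, and to \cite[Section 4.3]{Kangasniemi_CohomBound} for the adaptation to $\K = \C$. Your proposal is therefore not comparable to a proof the paper gives; it is an independent reconstruction. In broad outline it is consistent with the ISS methodology (monotone operator structure, Hodge decomposition, reverse H\"older self-improvement), so it is a reasonable sketch of what the cited source does.

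There are, however, two points you should tighten. First, the claim that \eqref{eq:ISS_cond_3} guarantees a convex antiderivative $F$ of $\cG_k$ is incorrect as stated: homogeneity alone does not make a monotone operator a gradient, and the general ISS framework is expressly written without a variational hypothesis (their existence argument uses monotone-operator machinery rather than minimization of a functional). For the specific operator $\cG_k$ of \eqref{eq:Aharm_ISS_Interpretation} one does have $\cG_k = \nabla F$ with $F(\xi) = \tfrac{k}{n}\abs{\xi}_g^{n/k}$ (up to sign and the $\hodge_{g_0}\hodge_g$ normalization), but this needs to be verified by direct computation, not attributed to \eqref{eq:ISS_cond_3}; and if you take the variational route you are giving a proof for gradient $\cG$ only, which is a genuine specialization of the cited theorem rather than a reconstruction of it. Second, the higher-integrability step is the technical heart of ISS and your Gehring sketch is loose: the local decomposition and the tail term $D(\phi_0, \psi_0, B)$ must be organized so that the Gehring lemma applies to the full inhomogeneous system, and the passage from ``some $q > n/k$'' to $\phi \in L^{n/k, \sharp}$ is a single self-improvement, not an iteration. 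Your remark that the complex case costs nothing because the estimates are phrased via $\Re\ip{\cdot}{\cdot}_{g_0}$ is correct and agrees with the discussion referenced in \cite[Section 4.3]{Kangasniemi_CohomBound}.
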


We now apply Theorem \ref{thm:ISS_higherint} to obtain several properties of $\pharm{g}{k}(M; \K)$. The first consequence is an existence property for elements of $\pharm{g}{k}(M; \K)$.

\begin{lemma}\label{lem:harmonic_existence}
	Let $M$ be a closed, connected, oriented Riemannian $n$-mani\-fold with Riemannian metric $g_0$, and let $[g]$ be a bounded measurable conformal structure on $M$. Let $k \in \{1, \dots, n-1\}$, and let $\K \in \{\R, \C\}$. Then for every $k$-form $\eta \in L^{n/k}(\wedge^k M; \K)$ with $d\eta = 0$ weakly, there exists a unique $\omega \in \pharm{g}{k}(M; \K)$ for which $\eta - \omega = d\tau$ for some $\tau \in L^{n/k}(\wedge^{k-1} M; \K)$.
\end{lemma}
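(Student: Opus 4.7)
The plan is to deduce the lemma directly from Theorem \ref{thm:ISS_higherint} by choosing the inhomogeneous data so that the abstract Hodge system exactly encodes the desired statement. Concretely, I would apply Theorem \ref{thm:ISS_higherint} with the conformally invariant operator $\cG_k$ of \eqref{eq:Aharm_ISS_Interpretation} and initial data $\phi_0 = -\eta$, $\psi_0 = 0$. This produces unique $\phi \in L^{n/k}(\wedge^k M; \K)$ and $\psi \in L^{n/(n-k)}(\wedge^k M; \K)$ with $\phi = d\alpha$ for some $\alpha \in L^{n/k}(\wedge^{k-1} M; \K)$ and satisfying \eqref{eq:ISS_diffeq_d}-\eqref{eq:ISS_diffeq_conj}.

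For existence, I would set $\omega = \eta - \phi = \eta - d\alpha$ and $\tau = \alpha$, so that $\eta - \omega = d\tau$ by construction. To verify $\omega \in \pharm{g}{k}(M; \K)$ I would check the two conditions \eqref{eq:diffeq_d}-\eqref{eq:diffeq_dstar}. The first is immediate: $d\omega = d\eta - d(d\alpha) = 0$. For the second, the homogeneity condition \eqref{eq:ISS_cond_3} with $t = -1$ gives the sign identity $\cG_k(-\xi) = -\cG_k(\xi)$, so \eqref{eq:ISS_diffeq_conj} together with $\phi + \phi_0 = -\omega$ yields $\cG_k(\omega) = -\psi$. Combining this with $d^* \psi = 0$ from \eqref{eq:ISS_diffeq_dstar}, together with the identity $d^* = (-1)^{nk+1}\hodge_{g_0} d \hodge_{g_0}$ and the involutivity of $\hodge_{g_0}$ up to sign, unpacks to $d(\abs{\omega}_g^{n/k - 2}\hodge_g \omega) = 0$, which is precisely \eqref{eq:diffeq_dstar}.

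For uniqueness, suppose $\omega_1, \omega_2 \in \pharm{g}{k}(M; \K)$ with $\eta - \omega_i = d\tau_i$ for $i = 1, 2$. I would observe that each pair $(\phi_i, \psi_i) := (d\tau_i, -\cG_k(\omega_i))$ solves the Hodge system with the same data $(\phi_0, \psi_0) = (-\eta, 0)$: the first equation is automatic since $\phi_i = d\tau_i$; the third equation follows again from the sign identity for $\cG_k$ applied to $\phi_i + \phi_0 = -\omega_i$; and the second equation $d^*\psi_i = -d^*\cG_k(\omega_i) = 0$ uses the $p$-harmonicity of $\omega_i$. The uniqueness part of Theorem \ref{thm:ISS_higherint} then forces $\phi_1 = \phi_2$, that is $d\tau_1 = d\tau_2$, and hence $\omega_1 = \omega_2$.

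There is no real analytic obstacle here, because Theorem \ref{thm:ISS_higherint} supplies both existence and uniqueness; the content of the proof is simply the correct translation between the $p$-harmonic equations and the Hodge system. The one substantive observation is the choice $\phi_0 = -\eta$ (rather than $+\eta$), which together with the oddness of $\cG_k$ built into \eqref{eq:ISS_cond_3} makes the signs line up so that the produced $\phi$ represents exactly the coexact correction $\eta - \omega$.
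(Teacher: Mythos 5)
Your proof is correct and follows essentially the same route as the paper, which applies Theorem \ref{thm:ISS_higherint} with $\psi_0 = 0$ and $\eta$ as the initial $k$-form data, then reads off $\omega$ and $\tau$ from the unique solution. The only difference is a sign convention: the paper takes $\phi_0 = \eta$ (not $-\eta$) and sets $\omega = \eta + \phi$ with primitive $\tau = -\alpha$, so your remark that the choice $\phi_0 = -\eta$ is ``substantive'' overstates matters slightly --- either sign works after adjusting the sign of $\tau$.
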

\begin{proof}
	We apply Theorem \ref{thm:ISS_higherint} with $\phi_0 = \eta$ and $\psi_0 = 0$. Now, $(\phi, \psi)$ is a solution if and only if $\phi + \phi_0 \in \pharm{g}{k}(M; \K)$. We therefore have $\omega = \eta + \phi$ for the unique solution $(\phi, \psi)$.
\end{proof}

For the second property, we recall the \emph{$L^p$ Hodge decomposition}. Namely, suppose that $\omega \in L^p(\wedge^k M; \K)$ for some $k \in \N$ and $1 < p < \infty$. Then $\omega$ can be uniquely written as the sum of three forms $\alpha, \beta, \gamma \in L^p(\wedge^k M; \K)$ with specific properties. First, $\alpha$ is the weak differential $\alpha = d\xi$ of some form $\xi \in L^p(\wedge^{k-1} M; \K)$. Second, $\beta$ is the weak codifferential $\beta = d^* \zeta$ of some form $\zeta \in L^p(\wedge^{k+1} M; \K)$. Third, we have $d\gamma = d^*\gamma = 0$ in the weak sense. In particular, $\gamma$ is harmonic, and therefore smooth.

For a proof of the $L^p$ Hodge decomposition, see \cite[Proposition 6.5]{Scott_LpHodge} or \cite[Theorem 5.7]{Iwaniec-Scott-Stroffolini}. While the proofs are for $\K = \R$, the case $\K = \C$ can in fact be reduced to the real case, since $L^p(\wedge^k M; \C) = L^p(\wedge^k M; \R) \otimes \C$ and the operators $d$ and $d^*$ are linear. Note also that if $d\omega = 0$, then in the decomposition for $\omega$ we have $\beta = 0$. Indeed, otherwise we could also decompose $\omega = \alpha + 0 + (\beta+\gamma)$, which would violate the uniqueness of the decomposition.

\begin{lemma}\label{lem:p_harm_higher_int}
	Let $M$ be a closed, connected, oriented Riemannian $n$-mani\-fold with Riemannian metric $g_0$, and let $[g]$ be a bounded measurable conformal structure on $M$. Let $k \in \{1, \dots, n-1\}$, and let $\K \in \{\R, \C\}$. Then $\pharm{g}{k}(M; \K) \subset L^{n/k, \sharp}(\wedge^k M; \K)$.
\end{lemma}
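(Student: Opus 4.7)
The plan is to deduce the higher integrability of $\omega \in \pharm{g}{k}(M; \K)$ by exhibiting $\omega$ as the unique solution of a Hodge system whose data has higher integrability, and then invoking the second (higher integrability) part of Theorem \ref{thm:ISS_higherint}.

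First, I would split off the cohomologous harmonic representative. Fix $\omega \in \pharm{g}{k}(M; \K)$. Since $\omega \in L^{n/k}(\wedge^k M; \K)$ and $d\omega = 0$, the $L^{n/k}$ Hodge decomposition writes
\[
	\omega = d\xi + \gamma,
\]
where $\xi \in L^{n/k}(\wedge^{k-1} M; \K)$ and $\gamma$ is classically harmonic (with respect to $g_0$); in particular $\gamma$ is smooth, so $\gamma \in L^{n/k, \sharp}(\wedge^k M; \K)$. The codifferential part of the decomposition vanishes since $d\omega = 0$, by the uniqueness of the decomposition.

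Next, I would apply Theorem \ref{thm:ISS_higherint} with data $\phi_0 = \gamma$ and $\psi_0 = 0$, which lie in $L^{n/k, \sharp}(\wedge^k M; \K)$ and $L^{n/(n-k), \sharp}(\wedge^k M; \K)$ respectively. I would then verify that $(\phi, \psi) = (d\xi, \cG_k(\omega))$ solves the Hodge system \eqref{eq:ISS_diffeq_d}--\eqref{eq:ISS_diffeq_conj} for these data: clearly $d\phi = d(d\xi) = 0$; the identity $\psi + \psi_0 = \cG_k(\omega) = \cG_k(\phi + \phi_0)$ holds by construction; and $d^* \psi = d^* \cG_k(\omega) = 0$ is a direct unpacking of the definition \eqref{eq:Aharm_ISS_Interpretation} together with the fact that $d(\abs{\omega}_g^{(n/k)-2}\hodge_g \omega) = 0$, using $\hodge_{g_0}\hodge_{g_0} = \pm \id$. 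Finally $\phi = d\xi$ with $\xi \in L^{n/k}(\wedge^{k-1} M; \K)$, so the representability hypothesis on $\phi$ is met.

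By the uniqueness part of Theorem \ref{thm:ISS_higherint}, this is the unique solution; by the higher integrability part, $\phi = d\xi \in L^{n/k, \sharp}(\wedge^k M; \K)$. Adding the smooth form $\gamma$ yields $\omega = d\xi + \gamma \in L^{n/k, \sharp}(\wedge^k M; \K)$, as desired.

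The main obstacle is purely bookkeeping: one has to check carefully that $\cG_k(\omega)$ really does solve \eqref{eq:ISS_diffeq_dstar}--\eqref{eq:ISS_diffeq_conj} with the chosen data (so the candidate $(d\xi, \cG_k(\omega))$ is the genuine solution to which the uniqueness/higher integrability statements apply), and that the candidate $\phi = d\xi$ satisfies the exactness hypothesis appearing in Theorem \ref{thm:ISS_higherint}. Once these are in place, the higher integrability is immediate.
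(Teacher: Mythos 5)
Your argument is exactly the paper's own proof: the $L^{n/k}$ Hodge decomposition $\omega = d\xi + \gamma$ with vanishing codifferential part, Theorem \ref{thm:ISS_higherint} applied with $\phi_0 = \gamma$ and $\psi_0 = 0$, the verification that $(d\xi, \cG_k(\omega))$ is the unique solution, and higher integrability of $d\xi$ combined with smoothness of $\gamma$. The only difference is that you spell out the verification of \eqref{eq:ISS_diffeq_d}--\eqref{eq:ISS_diffeq_conj} a bit more explicitly, which is fine.
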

\begin{proof}
	Let $\omega \in \pharm{g}{k}(M; \K)$. We take the $L^{n/k}$ Hodge decomposition $\omega = \alpha + \beta + \gamma$. Since $d\omega = 0$, we have $\beta = 0$ as discussed before.
	
	We now apply Theorem \ref{thm:ISS_higherint} for $\phi_0 = \gamma$ and $\psi_0 = 0$. Now, $(\phi, \psi) = (\alpha, \cG_k(\omega))$ is a valid solution, so by uniqueness it is the only one. Since $\gamma$ is harmonic and therefore smooth, it is in $L^{n/k, \sharp}(\wedge^k M; \K)$. Hence, we obtain that $\alpha = \phi \in L^{n/k, \sharp}(\wedge^k M; \K)$, and therefore $\omega = \alpha + \gamma \in L^{n/k, \sharp}(\wedge^k M; \K)$.
\end{proof}

Finally, using the properties shown so far, we give a version of Theorem \ref{thm:ISS_higherint} which in fact more closely resembles the $L^p$ Hodge decomposition.

\begin{prop}\label{prop:nonlinear_Hodge_decomposition}
	Let $M$ be a closed, connected, oriented Riemannian $n$-mani\-fold with Riemannian metric $g_0$, let $[g]$ be a bounded measurable conformal structure on $M$, and let $g \in [g]$. Let $k \in \{1, \dots, n-1\}$ and $\K \in \{\R, \C\}$. Then we may express every $k$-form $\omega \in L^{n/k}(\wedge^k M; \K)$ in the form
	\[
		\omega = d\alpha + \abs{d\beta}_g^{\frac{n}{n-k} - 2} \hodge_g d\beta + \gamma,
	\]
	where $d\alpha \in L^{n/k}(\wedge^k M; \K)$, $d\beta \in L^{n/(n-k)}(\wedge^{n-k} M; \K)$, and $\gamma \in \pharm{g}{k}(M; \K)$ are all unique. Moreover, if $\omega \in L^{n/k, \sharp}(\wedge^k M; \K)$, then we also have $d\alpha \in L^{n/k, \sharp}(\wedge^k M; \K)$ and $d\beta \in L^{n/(n-k), \sharp}(\wedge^{n-k} M; \K)$.
\end{prop}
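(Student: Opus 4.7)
The plan is to apply Theorem \ref{thm:ISS_higherint} at exponent $n/(n-k)$ to construct $d\beta$, and then Lemma \ref{lem:harmonic_existence} on the closed remainder to extract $\gamma$ and $d\alpha$. The observation tying these together is that, for any $(n-k)$-form $\xi$,
\[
    \hodge_{g_0} \cG_{n-k}(\xi) = (-1)^{k(n-k)} \abs{\xi}_g^{\frac{n}{n-k} - 2} \hodge_g \xi,
\]
so the advertised middle term is, up to a sign, $\hodge_{g_0} \cG_{n-k}(d\beta)$.

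Specifically, I would invoke Theorem \ref{thm:ISS_higherint} with $k$ replaced by $n-k$ and with initial data $\tilde{\phi}_0 = 0 \in L^{n/(n-k)}(\wedge^{n-k} M; \K)$ and $\tilde{\psi}_0 = \hodge_{g_0} \omega \in L^{n/k}(\wedge^{n-k} M; \K)$. This yields unique $\tilde\phi = d\beta \in L^{n/(n-k)}(\wedge^{n-k} M; \K)$ and $\tilde\psi \in L^{n/k}(\wedge^{n-k} M; \K)$ satisfying $d^* \tilde\psi = 0$ and $\tilde\psi + \hodge_{g_0}\omega = \cG_{n-k}(d\beta)$. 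Using $d^* = (-1)^{n(n-k)+1} \hodge_{g_0} d \hodge_{g_0}$ on $(n-k)$-forms together with $\hodge_{g_0}^2 = (-1)^{k(n-k)}\id$ on $k$-forms, the equation $d^* \tilde\psi = 0$ rewrites, after applying $\hodge_{g_0}$ and substituting, as
\[
    d \bigl( \omega - \abs{d\beta}_g^{\frac{n}{n-k} - 2} \hodge_g d\beta \bigr) = 0.
\]
The bracketed $k$-form lies in $L^{n/k}$ since $\abs{d\beta}_g^{n/(n-k) - 1} \in L^{n/k}$, so Lemma \ref{lem:harmonic_existence} produces a unique $\gamma \in \pharm{g}{k}(M; \K)$ and some $\alpha \in L^{n/k}(\wedge^{k-1}M; \K)$ with $\omega - \abs{d\beta}_g^{n/(n-k) - 2} \hodge_g d\beta - \gamma = d\alpha$, completing the existence half of the decomposition.

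Uniqueness reverses this construction: any decomposition $\omega = d\alpha + \abs{d\beta}_g^{n/(n-k) - 2}\hodge_g d\beta + \gamma$ gives rise to a valid pair $(d\beta, \cG_{n-k}(d\beta) - \hodge_{g_0}\omega)$ solving the Hodge system, because $d^*\tilde\psi = 0$ is equivalent, via the same sign computation, to $d(d\alpha + \gamma) = 0$; Theorem \ref{thm:ISS_higherint} then pins down $d\beta$, and Lemma \ref{lem:harmonic_existence} pins down $\gamma$ and hence $d\alpha$. The higher integrability clause follows from the $L^{p,\sharp}$-portion of Theorem \ref{thm:ISS_higherint} applied to $\hodge_{g_0}\omega \in L^{n/k,\sharp}$, the pointwise bound $\smallabs{\abs{d\beta}_g^{n/(n-k) - 2}\hodge_g d\beta}_{g_0} \leq C \abs{d\beta}_{g_0}^{n/(n-k) - 1}$ (which promotes the middle term to $L^{n/k,\sharp}$), and Lemma \ref{lem:p_harm_higher_int} (which gives $\gamma \in L^{n/k,\sharp}$ for free). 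The main obstacle is the sign bookkeeping: the sign of $\tilde\psi_0$ must be chosen so that the various $(-1)^{k(n-k)}$ factors arising from $\hodge_{g_0}^2$ and from the commutation of $\hodge_{g_0}$ with $\cG_{n-k}$ conspire to produce the displayed decomposition with a $+$ in front of every term. This is mechanical but is the precise point where the match between Theorem \ref{thm:ISS_higherint} and the advertised nonlinear Hodge decomposition is nailed down.
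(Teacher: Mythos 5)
Your argument is correct and coincides with the paper's own proof: both apply Theorem~\ref{thm:ISS_higherint} with the index $n-k$ and data $\phi_0 = 0$, $\psi_0 = \hodge_{g_0}\omega$ to produce $d\beta$, then use $d^*\psi = 0 \Leftrightarrow \hodge_{g_0}\psi$ closed to invoke Lemma~\ref{lem:harmonic_existence} on the remainder, with higher integrability coming from the $L^{p,\sharp}$ clause of Theorem~\ref{thm:ISS_higherint} together with Lemma~\ref{lem:p_harm_higher_int}. The only cosmetic difference is that you track the $(-1)^{k(n-k)}$ factors explicitly, whereas the paper writes the solution formula directly and absorbs the sign when passing $\hodge_{g_0}\psi$ to Lemma~\ref{lem:harmonic_existence}.
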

\begin{proof}
	We apply Theorem \ref{thm:ISS_higherint} with $\psi_0 = \hodge_{g_0} \omega \in L^{n/k}(\wedge^{n-k}M; \K)$ and $\phi_0 = 0$. We obtain
	\[
		\omega = \abs{\phi}_g^{\frac{n}{n-k} - 2} \hodge_g \phi - (-1)^{k(n-k)} \hodge_{g_0} \psi.
	\]
	Here $\phi = d\beta \in L^{n/(n-k)}(\wedge^{n-k} M; \K)$ and $\psi \in L^{n/k}(\wedge^{n-k} M; \K) \cap \ker d^*$ are unique. Moreover, if $\omega \in L^{n/k, \sharp}(\wedge^k M; \K)$, then $d\beta \in L^{n/(n-k), \sharp}(\wedge^{n-k} M; \K)$ and $\psi \in L^{n/k, \sharp}(\wedge^{n-k} M; \K)$.
	
	Finally, since $\psi \in \ker d^*$, we have $\hodge_{g_0} \psi \in \ker d$. Hence, we may use Lemma \ref{lem:harmonic_existence} on $\hodge_{g_0} \psi$, and obtain that $\hodge_{g_0} \psi = d\alpha + \gamma$ for some unique $\gamma \in \pharm{g}{k}(M; \K)$. Moreover, if $\omega \in L^{n/k, \sharp}(\wedge^k M; \K)$, then by Lemma \ref{lem:p_harm_higher_int} we have $d\alpha = \hodge_{g_0} \psi - \gamma \in L^{n/k, \sharp}(\wedge^k M; \K)$. The claim follows.
\end{proof}

For a $\omega \in L^{n/k}(\wedge^k M)$ with $k \in \{1, \dots, n-1\}$, and for a bounded measurable conformal structure $[g]$ on $M$, we call the unique triple $(d\alpha, d\beta, \gamma)$ provided by Proposition \ref{prop:nonlinear_Hodge_decomposition} the \emph{conformal Hodge decomposition of $\omega$ with respect to $[g]$}. We point out that the decomposition indeed depends only on the conformal structure.

\begin{lemma}\label{lem:conformal_decomposition_indep}
	Let $M$ be a closed, connected, oriented Riemannian $n$-mani\-fold, and let $[g]$ be a bounded measurable conformal structure on $M$. Let $\omega \in L^{n/k}(\wedge^k M; \K)$, where $k \in \{1, \dots, n-1\}$ and $\K \in \{\R, \C\}$. Then the conformal Hodge decomposition $(d\alpha, d\beta, \gamma)$ of $\omega$ provided by Proposition \ref{prop:nonlinear_Hodge_decomposition} does not depend on the choice of $g \in [g]$.
\end{lemma}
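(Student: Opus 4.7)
The plan is to reduce the claim directly to the uniqueness part of Proposition \ref{prop:nonlinear_Hodge_decomposition}, once two conformal invariance facts are in place. Given two metrics $g_1, g_2 \in [g]$, I would take the decompositions provided by Proposition \ref{prop:nonlinear_Hodge_decomposition} for each,
\[
\omega = d\alpha_i + \abs{d\beta_i}_{g_i}^{\frac{n}{n-k}-2} \hodge_{g_i} d\beta_i + \gamma_i, \qquad i = 1, 2,
\]
and aim to show that each triple $(d\alpha_i, d\beta_i, \gamma_i)$ is also a valid decomposition with respect to a common fixed metric, say $g_2$. Uniqueness for $g_2$ then forces $d\alpha_1 = d\alpha_2$, $d\beta_1 = d\beta_2$, and $\gamma_1 = \gamma_2$.

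The first invariance needed is that $\gamma_1 \in \pharm{g_1}{k}(M; \K) = \pharm{g_2}{k}(M; \K)$, which is precisely the content of Lemma \ref{lem:conf_exp_invariance}. The second invariance is that the middle term $\abs{d\beta}_g^{\frac{n}{n-k}-2} \hodge_g d\beta$ depends only on the conformal class of $g$, when applied to an $(n-k)$-form. This is a direct computation: if $g' = \rho^2 g$, then for an $(n-k)$-form $\eta$ one has $\abs{\eta}_{g'} = \rho^{-(n-k)} \abs{\eta}_g$ and $\hodge_{g'} \eta = \rho^{2k-n} \hodge_g \eta$, so that
\[
\abs{\eta}_{g'}^{\frac{n}{n-k}-2} \hodge_{g'} \eta
= \rho^{-(n-k)\left(\frac{n}{n-k}-2\right) + (2k-n)} \abs{\eta}_g^{\frac{n}{n-k}-2} \hodge_g \eta,
\]
and the exponent of $\rho$ collapses to $-n + 2(n-k) + 2k - n = 0$. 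This is the same cancellation that underlies the conformal invariance of the equation \eqref{eq:diffeq_dstar}, and it is essentially the same calculation as in the proof of Lemma \ref{lem:conf_exp_invariance}.

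With these two facts in hand, rewriting the $g_1$-decomposition of $\omega$ using $g_2$-data yields
\[
\omega = d\alpha_1 + \abs{d\beta_1}_{g_2}^{\frac{n}{n-k}-2} \hodge_{g_2} d\beta_1 + \gamma_1,
\]
with $d\alpha_1 \in L^{n/k}(\wedge^k M; \K)$, $d\beta_1 \in L^{n/(n-k)}(\wedge^{n-k} M; \K)$, and $\gamma_1 \in \pharm{g_2}{k}(M; \K)$. This is exactly a conformal Hodge decomposition of $\omega$ with respect to $g_2$. The $g_2$-decomposition has the same properties by construction, so the uniqueness clause of Proposition \ref{prop:nonlinear_Hodge_decomposition} applied to the metric $g_2$ forces equality of the two triples.

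There is no real obstacle here; the lemma is essentially bookkeeping once the two invariance statements are noted. The only minor care needed is checking the exponent cancellation for the middle term and making sure that the $L^p$-integrability classes of each component do not depend on the representative $g \in [g]$, which follows from \eqref{eq:norm_comparison} applied on the closed manifold $M$.
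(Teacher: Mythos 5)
Your proposal is correct and takes essentially the same approach as the paper: show that the $g_1$-decomposition is also a valid conformal Hodge decomposition for $g_2$ (using Lemma~\ref{lem:conf_exp_invariance} for the harmonic part and the exponent cancellation for the middle term), then invoke the uniqueness clause of Proposition~\ref{prop:nonlinear_Hodge_decomposition}. The exponent computation you give, $-(n-k)\bigl(\tfrac{n}{n-k}-2\bigr)+(2k-n)=0$, matches the one in the paper's proof.
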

\begin{proof}
	Let $\rho \colon M \to (0, \infty)$ be measurable, and let $(d\alpha, d\beta, \gamma)$ be the conformal Hodge decomposition of $\omega$ obtained using the metric $g \in [g]$. By uniqueness, it suffices to show that $(d\alpha, d\beta, \gamma)$ is also a conformal Hodge decomposition of $\omega$ for $\rho^2 g \in [g]$. However, this is true, since $\gamma \in \pharm{g}{k}(M; \K) = \pharm{\rho^2 g}{k}(M; \K)$ and since
	\begin{multline*}
		\abs{d\beta}_{\rho^2 g}^{\frac{n}{n-k} - 2} \hodge_{\rho^2 g} d\beta
		= \left(\rho^{-(n-k)} \abs{d\beta}_{g} \right)^{\frac{n}{n-k} - 2}
			\rho^{n - 2(n-k)} \hodge_{g} d\beta\\
		= \rho^{(2(n-k) - n) + (n - 2(n-k))} 
			\abs{d\beta}_{g}^{\frac{n}{n-k} - 2} \hodge_{g} d\beta
		= \abs{d\beta}_{g}^{\frac{n}{n-k} - 2} \hodge_{g} d\beta.
	\end{multline*}
\end{proof}

\subsection{Continuity}

The final regularity result we require is a continuity property associated with Lemma \ref{lem:harmonic_existence}. For $\K = \R$, it is a special case of \cite[Theorem 8.5]{Iwaniec-Scott-Stroffolini}, with the essential ideas of its proof explained in \cite[Proposition 7.1]{Iwaniec-Scott-Stroffolini}. However, for the convenience of the reader, we present a proof which also takes into account the case $\K = \C$.

\begin{thm}\label{thm:ISS_continuity_complex}
	Let $M$ be a closed, connected, oriented Riemannian $n$-mani\-fold with Riemannian metric $g_0$, and let $[g]$ be a bounded measurable conformal structure on $M$. Let $k \in \{1, \dots, n-1\}$, and let $\K \in \{\R, \C\}$. Then there exist constants $C \in (0, \infty)$ and $t \in (0, 1)$ with the following property: for all $k$-forms $\eta, \eta' \in L^{n/k}(\wedge^k M; \K)$ with $d\eta = d \eta' = 0$ weakly, and for the corresponding unique $\omega, \omega' \in \pharm{g}{k}(M; \K)$ provided by Lemma \ref{lem:harmonic_existence}, we have
	\[
		\norm{\omega - \omega'}_{n/k, g} \leq 
		C \left(\norm{\omega}_{n/k, g} + \norm{\omega'}_{n/k, g}\right)^{1-t}
		\norm{\eta - \eta'}_{n/k, g}^t.
	\]
\end{thm}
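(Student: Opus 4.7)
The plan is to follow the standard coercivity-plus-duality approach used for the monotone operators $\cG_k$ appearing in the Hodge system. Set $p = n/k$ and $q = n/(n-k) = p/(p-1)$. By Lemma \ref{lem:harmonic_existence} applied to $\eta - \eta' \in L^p(\wedge^k M; \K) \cap \ker d$, we can write
\[
    \omega - \omega' = (\eta - \eta') - d\sigma
\]
for some $\sigma \in L^p(\wedge^{k-1} M; \K)$. The second defining equation of $\pharm{g}{k}(M; \K)$ says exactly that $\cG_k(\omega)$ and $\cG_k(\omega')$ lie weakly in $\ker d^*$ (once one remembers that $\cG_k = \hodge_{g_0}(\abs{\,\cdot\,}_g^{p-2} \hodge_g \,\cdot\,)$ and that $\hodge_g$ and $\hodge_{g_0}$ differ only by a conformal factor). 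Hence the integration by parts
\[
    \int_M \Re \ip{\cG_k(\omega) - \cG_k(\omega')}{d\sigma}_{g_0} \vol_{g_0} = 0
\]
holds, first for smooth $\sigma$ and then for $\sigma \in L^p$ with $d\sigma \in L^p$ by density. Consequently
\[
    \int_M \Re \ip{\cG_k(\omega) - \cG_k(\omega')}{\omega - \omega'}_{g_0} \vol_{g_0}
    = \int_M \Re \ip{\cG_k(\omega) - \cG_k(\omega')}{\eta - \eta'}_{g_0} \vol_{g_0}.
\]

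I would then bound this identity from both sides. The lower bound comes from the strong monotonicity \eqref{eq:ISS_cond_2} of $\cG_k$, giving
\[
    \int_M \Re \ip{\cG_k(\omega) - \cG_k(\omega')}{\omega - \omega'}_{g_0} \vol_{g_0}
    \geq C^{-1} \int_M (\abs{\omega}_{g_0} + \abs{\omega'}_{g_0})^{p-2} \abs{\omega - \omega'}_{g_0}^2 \vol_{g_0},
\]
which after a standard Hölder argument controls $\norm{\omega - \omega'}_{p, g_0}$: when $p \geq 2$, pointwise $\abs{\omega - \omega'}^p \leq (\abs{\omega}+\abs{\omega'})^{p-2}\abs{\omega - \omega'}^2$; when $1 < p < 2$, Hölder with exponents $2/p$ and $2/(2-p)$ gives
\[
    \norm{\omega - \omega'}_{p,g_0}^2
    \leq C \bigl(\norm{\omega}_{p,g_0} + \norm{\omega'}_{p,g_0}\bigr)^{2-p}
      \int_M (\abs{\omega}+\abs{\omega'})^{p-2}\abs{\omega-\omega'}^2 \vol_{g_0}.
\]
The upper bound comes from Hölder together with the growth bound \eqref{eq:ISS_cond_1} on $\cG_k$,
\[
    \abs{\cG_k(\omega) - \cG_k(\omega')}_{g_0} \leq C (\abs{\omega}_{g_0} + \abs{\omega'}_{g_0})^{p-2} \abs{\omega-\omega'}_{g_0},
\]
which, again via Hölder with exponents $(p-1)/(p-2)$ and $p-1$ (modified appropriately for $1 < p < 2$), yields
\[
    \norm{\cG_k(\omega) - \cG_k(\omega')}_{q, g_0}
    \leq C \bigl(\norm{\omega}_{p, g_0} + \norm{\omega'}_{p, g_0}\bigr)^{p-2} \norm{\omega - \omega'}_{p, g_0},
\]
so the right-hand side of the identity is at most $C (\norm{\omega}_p + \norm{\omega'}_p)^{p-2} \norm{\omega-\omega'}_p \norm{\eta-\eta'}_p$.

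Combining the two bounds gives, for $p \geq 2$,
\[
    \norm{\omega-\omega'}_{p, g_0}^p
    \leq C \bigl(\norm{\omega}_{p, g_0} + \norm{\omega'}_{p, g_0}\bigr)^{p-2} \norm{\omega-\omega'}_{p, g_0} \norm{\eta-\eta'}_{p, g_0},
\]
which rearranges to the claim with $t = 1/(p-1)$; the $1 < p < 2$ case rearranges similarly with $t = p-1$. Since $g$ and $g_0$ are comparable on the compact $M$ by \eqref{eq:norm_comparison}, one may pass between $\norm{\cdot}_{p, g_0}$ and $\norm{\cdot}_{p, g}$ at the cost of absorbing the conformal distance into $C$. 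The extension to $\K = \C$ requires no new idea: everything above uses $\Re \ip{\cdot}{\cdot}_{g_0}$, which is a real inner product inducing the Hermitian norm, and the bilinear estimates \eqref{eq:ip_p_ineq_1}-\eqref{eq:ip_p_ineq_2} apply verbatim.

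The main technical obstacle is the Hölder bookkeeping: the two regimes $p \geq 2$ and $1 < p < 2$ require different choices of exponents both on the coercivity side (to pass from the $(\abs{\omega}+\abs{\omega'})^{p-2}\abs{\omega-\omega'}^2$ integral back to $\norm{\omega-\omega'}_p$) and on the growth side (to dualize $\norm{\cG_k(\omega) - \cG_k(\omega')}_q$), so the final exponent $t$ depends on $p$ and must be tracked carefully in each case. Beyond that, the only other delicate point is the density argument justifying the integration by parts against $d\sigma$, which is routine once one invokes the $L^p$ duality of $d$ and $d^*$ on a closed Riemannian manifold.
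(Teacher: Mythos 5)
Your overall strategy matches the paper's: both use the monotonicity identity obtained by integrating $\cG_k(\omega) - \cG_k(\omega')$ against $\omega-\omega'$ and $\eta-\eta'$, then bound below via the strong monotonicity \eqref{eq:ip_p_ineq_2} and above via H\"older. Two minor remarks on the setup. First, the identity $\omega - \omega' = (\eta - \eta') - d\sigma$ is obtained by applying Lemma \ref{lem:harmonic_existence} to $\eta$ and $\eta'$ \emph{separately} and subtracting, not by applying it to $\eta - \eta'$: the latter would produce the representative of the class $[\eta - \eta']$ in $\pharm{g}{k}(M;\K)$, which, absent linearity (what we do not yet know at this stage), need not equal $\omega - \omega'$. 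Second, the paper carries out the integration-by-parts identity at the level of wedge products with the conformal metric $g$ normalized so that $\vol_g = \vol_{g_0}$, rather than with $\Re\ip{\cdot}{\cdot}_{g_0}$ and $d^*$; both routes are fine.

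The substantive difference from the paper, and where there is a genuine gap, is the case split at $p = 2$ and the upper bound. On the upper bound you invoke the growth estimate \eqref{eq:ISS_cond_1} and a H\"older with exponents $(p-1)/(p-2)$ and $p-1$. For $1 < p < 2$ the first exponent is negative, so this H\"older step simply does not apply, and your stated remedy (``modified appropriately'') is not spelled out; moreover the claimed exponent $t = p - 1$ for the subquadratic range does not appear to follow from any of the natural modifications (e.g.\ the alternative growth bound $\lvert\,\lvert v\rvert^{p-2}v - \lvert w\rvert^{p-2}w\,\rvert \leq C\lvert v-w\rvert^{p-1}$ leads to $t = 1/(3-p)$ instead). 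The paper sidesteps all of this: for the upper bound it avoids \eqref{eq:ISS_cond_1} entirely and uses only the triangle inequality together with the exact relation $\smallnorm{\abs{\omega}_g^{p-2}\omega}_{q,g} = \smallnorm{\omega}_{p,g}^{p-1}$, and on the lower-bound side it uses the exponents $s = \max(k, n-k)$ and $u = 2s/n$, which unifies the two ranges without a case split. Your argument can be repaired by adopting the same triangle-inequality shortcut for the upper bound and then performing your $2/p$, $2/(2-p)$ H\"older on the lower-bound side when $p < 2$ (which correctly gives $t = 1/2$ in that regime), but as written the subquadratic case has a hole.
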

\begin{proof}
	We may assume that $g \in [g]$ satisfies $\vol_g = \vol_{g_0}$, and therefore also that \eqref{eq:norm_comparison} holds. We have $\eta - \omega = d\tau$ and $\eta' - \omega' = d\tau'$ for some $\tau, \tau' \in L^{n/k}(\wedge^k M; \K)$. We claim that
	\begin{multline}\label{eq:ISS_monstrosity_sameintegrals}
		\int_M \ip{\omega - \omega'}
		{\abs{\omega}_g^{\frac{n}{k} - 2} \omega -
			\abs{\omega'}_g^{\frac{n}{k} - 2} \omega'}_g\vol_g\\
		= \int_M \ip{\eta - \eta'}
		{\abs{\omega}_g^{\frac{n}{k} - 2} \omega -
			\abs{\omega'}_g^{\frac{n}{k} - 2} \omega'}_g\vol_g.
	\end{multline} 
	Indeed, this follows from the computation
	\begin{align*}
		&\int_M \ip{(\eta - \omega) - (\eta' - \omega')}
			{\abs{\omega}_g^{\frac{n}{k} - 2} \omega -
				\abs{\omega'}_g^{\frac{n}{k} - 2} \omega'}_g
			\vol_g\\
		&\quad= \int_M (d\tau - d\tau')\wedge 
			\bigl(\abs{\omega}_g^{\frac{n}{k} - 2} \hodge_g\omega -
				\abs{\omega'}_g^{\frac{n}{k} - 2} \hodge_g\omega'\bigr)\\
		&\quad= \int_M (\tau - \tau')\wedge 
			d\bigl(\abs{\omega}_g^{\frac{n}{k} - 2} \hodge_g\omega -
			\abs{\omega'}_g^{\frac{n}{k} - 2} \hodge_g\omega'\bigr)\\
		&\quad= \int_M (\tau - \tau') \wedge 0 = 0.
	\end{align*}

	The first part of the proof is to obtain an estimate for the integral of $(\abs{\omega}_{g} + \abs{\omega'}_{g})^{\frac{n}{k}-2} 
	\abs{\omega - \omega'}_{g}^2$. By using \eqref{eq:ip_p_ineq_2} and \eqref{eq:ISS_monstrosity_sameintegrals} we obtain
	\begin{multline*}
		\int_M (\abs{\omega}_{g} + \abs{\omega'}_{g})^{\frac{n}{k}-2} 
			\abs{\omega - \omega'}_{g}^2 \vol_g\\
		\leq C \int_M \Re \ip{\abs{\omega}_g^{\frac{n}{k} - 2} \omega -
			\abs{\omega'}_g^{\frac{n}{k} - 2}\omega'}{\omega - \omega'}_{g}
			\vol_g\\
		\leq C \abs{\int_M \ip{\abs{\omega}_g^{\frac{n}{k} - 2} \omega -
			\abs{\omega'}_g^{\frac{n}{k} - 2}\omega'}{\omega - \omega'}_{g}
			\vol_g}\\
		= C \abs{\int_M \ip{\abs{\omega}_g^{\frac{n}{k} - 2} \omega -
			\abs{\omega'}_g^{\frac{n}{k} - 2}\omega'}{\eta - \eta'}_{g}
			\vol_g}.
	\end{multline*}
	Hence, a use of H\"older's inequality yields
	\begin{align*}
		&\int_M (\abs{\omega}_{g} + \abs{\omega'}_{g})^{\frac{n}{k}-2} 
			\abs{\omega - \omega'}_{g}^2 \vol_g\\
		&\hspace{1.5cm}\leq C \norm{\eta - \eta'}_{\frac{n}{k}, g}
			\norm{\abs{\omega}_g^{\frac{n}{k} - 2} \omega -
			\abs{\omega'}_g^{\frac{n}{k} - 2}\omega'}_{\frac{n}{n-k}, g}\\
		&\hspace{1.5cm}\leq C \norm{\eta - \eta'}_{\frac{n}{k}, g}
			\left(
			\norm{\abs{\omega}_g^{\frac{n}{k} - 2} \omega}_{\frac{n}{n-k}, g}
			+ \norm{\abs{\omega'}_g^{\frac{n}{k} - 2}\omega'}_{\frac{n}{n-k}, g}
			\right)\\
		&\hspace{1.5cm}= C \norm{\eta - \eta'}_{\frac{n}{k}, g}
			\left(
			\norm{\omega}_{\frac{n}{k}, g}^\frac{n-k}{k}
			+ \norm{\omega'}_{\frac{n}{k}, g}^\frac{n-k}{k}
			\right)\\
		&\hspace{3cm}\leq 2C \norm{\eta - \eta'}_{\frac{n}{k}, g}
			\left(
			\norm{\omega}_{\frac{n}{k}, g}
			+ \norm{\omega'}_{\frac{n}{k}, g}
			\right)^\frac{n-k}{k}		
	\end{align*}
	
	We then let $s = \max(k, n-k)$, and $u = 2s/n$. It follows that $u > 1$, and 
	\[
		\frac{n}{k} - \frac{2}{u} = \frac{n}{k} - \frac{n}{s} \geq 0. 
	\]
	Hence, we may now estimate by the triangle inequality that
	\begin{multline*}
		\abs{\omega - \omega'}_g^\frac{n}{k}
		\leq \left(\abs{\omega}_g + \abs{\omega'}_g
			\right)^{\frac{n}{k} - \frac{2}{u}}
			\abs{\omega - \omega'}_g^\frac{2}{u}\\
		= \left(\abs{\omega}_g + \abs{\omega'}_g
				\right)^{\frac{1}{u}\left(\frac{n}{k} - 2\right)}
			\abs{\omega-\omega'}^\frac{2}{u} \cdot
			\left(\abs{\omega}_g + \abs{\omega'}_g
				\right)^{\frac{n}{k}\cdot \frac{u-1}{u}}.		
	\end{multline*}
	Another use of H\"older's inequality yields
	\begin{multline*}
		\norm{\omega-\omega'}_{\frac{n}{k}, g}
		\leq \left( \int_M (\abs{\omega}_{g} + \abs{\omega'}_{g})^{\frac{n}{k}-2} 
			\abs{\omega - \omega'}_{g}^2 \vol_g \right)^\frac{k}{nu}\\
		\left( \int_M \left(\abs{\omega}_g + \abs{\omega'}_g
			\right)^{\frac{n}{k}} \vol_g \right)^\frac{k(u-1)}{nu}
	\end{multline*}
	By chaining our estimates, and using the triangle inequality of the $L^p$ norm, we obtain
	\[
		\norm{\omega-\omega'}_{\frac{n}{k}, g}\\
		\leq 2C \norm{\eta - \eta'}_{\frac{n}{k}, g}^\frac{k}{nu}
		\left(\norm{\omega}_{\frac{n}{k}, g}
			+ \norm{\omega'}_{\frac{n}{k}, g}\right)^{\frac{n-k}{nu} + \frac{u-1}{u}}.
	\]
	The claim hence follows.
\end{proof}

\section{Conformal cohomology and the embedding theorem}\label{sect:conf_cohomology}

Our goal in this section is to prove Theorems \ref{thm:conf_formal_algebra_embedding} and \ref{thm:conf_formal_clifford_embedding}. For this, the key tool we use is conformal cohomology. We first discuss our choice of conformal cohomology theory, and recall the two key properties we require: isomorphism with singular cohomology, and unique representation by elements of $\pharm{g}{*}(M; \K)$. With these tools, we then complete the proof of the cohomological embedding theorem for conformally formal manifolds.

\subsection{Cohomology in the conformal exponent}

Let $M$ be a closed, connected, oriented Riemannian $n$-manifold, with Riemannian metric $g_0$. The idea of conformal cohomology is to consider a chain complex consising of measurable $k$-forms $\omega \in L^{n/k}_\loc(\wedge^k M; \K)$ which have a weak differential $d\omega \in L^{n/(k+1)}_\loc(\wedge^{k+1} M; \K)$. The cohomology of this complex is very close to the usual singular and de Rham cohomologies. However, a difference occurs at the endpoints in the complex, due to the failure of Poincar\'e inequalities for the extremal exponents 1 and $\infty$. 

Currently, several approaches exist in the literature to modifying the aforementioned chain complex so that this difference with usual de Rham cohomology is eliminated. We use the beginning of this section to briefly survey some such approaches. Note that we present the global versions of the chain complexes, since we've restricted our attention to closed manifolds $M$; for non-compact $M$, all integrability conditions would have to be replaced with local versions.

In \cite{Kangasniemi-Pankka_PLMS}, the author and Pankka present an approach which modifies only the ends of the complex. The corresponding chain complex, $\cesob(\wedge^* M; \K)$, is defined by
\begin{align*}
	\cesob(\wedge^0 M; \K) 
		&= \left\{ \omega \in L^{\infty, \flat}(\wedge^0 M; \K) :
			d\omega \in L^n(\wedge^1 M; \K) \right\},\\
	\cesob(\wedge^k M; \K) 
		&= \left\{ \omega \in L^{\frac{n}{k}}(\wedge^k M; \K) :
			d\omega \in L^{\frac{n}{k+1}}(\wedge^{k+1} M; \K) \right\} \\
		&\hspace{5.5cm}\text{ for } 1 \leq k \leq n-2,\\
	\cesob(\wedge^{n-1} M; \K) 
		&= \left\{ \omega \in L^{\frac{n}{n-1}}(\wedge^{n-1} M; \K) :
			d\omega \in L^{1, \sharp}(\wedge^n M; \K) \right\},\\
	\cesob(\wedge^{n} M; \K) &= L^{1, \sharp}(\wedge^n M; \K),
\end{align*}
where the differentials are defined weakly. The weak differential $d$ is a chain map for the complex $\cesob(\wedge^* M)$, and the $k$:th cohomology of this complex is denoted $\cehom{k}(M)$.

The cohomology spaces $\cehom{k}(M; \K)$ are naturally isomorphic with the singular cohomology spaces $H^k(M; \K)$; see \cite[Section 4]{Kangasniemi-Pankka_PLMS} and \cite[(3.1)]{Kangasniemi_CohomBound}. The main advantage of this complex $\cesob(\wedge^* M; \K)$ is that every cohomology class $[\omega] \in \cehom{k}(M; \K)$ for $0 < k < n$ is complete under the conformally invariant norm $\norm{\cdot}_{n/k, g_0}$; see \cite[Lemma 3.3]{Kangasniemi-Pankka_PLMS} and \cite[Lemma 3.2]{Kangasniemi_CohomBound}. A major disadvantage, however, is that $\cesob(\wedge^* M; \K)$ is not closed under the wedge product. Hence, we do not inherit a natural ring structure in $\cehom{*}(M)$ from the complex $\cesob(\wedge^* M; \K)$.

Two other approaches are given in \cite{Donaldson-Sullivan_Acta} by Donaldson and Sullivan. For the first one, the complex is given by
\begin{align*}
	\cesobs(\wedge^0 M; \K) 
		&= \left\{ \omega \in C(\wedge^0 M; \K) :
		d\omega \in L^{n, \sharp}(\wedge^1 M; \K) \right\},\\
	\cesobs(\wedge^k M; \K) 
		&= \left\{ \omega \in L^{\frac{n}{k}, \sharp}(\wedge^k M; \K) :
		d\omega \in L^{\frac{n}{k+1}, \sharp}(\wedge^{k+1} M; \K) \right\} \\
		&\hspace{5.5cm}\text{ for } 1 \leq k \leq n-1,\\
	\cesobs(\wedge^{n} M; \K) &= L^{1, \sharp}(\wedge^n M; \K),
\end{align*}
where $C(\wedge^0 M; \K)$ denotes the space of continuous 0-forms on $M$, and the differentials are again interpreted weakly. We denote the resulting cohomology by $\cehoms{*}(M; \K)$. The cohomology $\cehoms{*}(M; \K)$ is naturally isomorphic to $H^*(M; \K)$, and now a natural wedge product structure exists by $[\omega] \wedge [\tau] = [\omega \wedge \tau]$ for $\omega, \tau \in \cesobs(\wedge^* M; \K) \cap \ker d$. The main disadvantage of the complex is the lack of completeness properties with respect to any norm.

The other approach of Donaldson and Sullivan is by a more complicated norm on $k$-forms, where one fixes parameters $\rho, \eps \in (0,1)$ and defines a norm
\[
	\!\!\norm{\omega}_{p, \rho, \eps} 
	= \inf \left\{
		\sqrt{\sum_{i=1}^\infty \rho^{-i} \norm{f_i}_{p+\eps^i}^2} :
		f_i \in L^{p+\eps^i}(M), 
		\norm{ \abs{\omega}_{g_0} - \sum_{i=1}^j f_i}_p \xrightarrow[j \to \infty]{} 0
	\right\}
\]
for $p \in [1, \infty)$ and $\omega \in L^p(\wedge^k M; \K)$. This defines a subspace $L^p_{\rho, \eps}(\wedge^k M; \K) \subset L^p(\wedge^k M; \K)$ which is complete under $\norm{\omega}_{p, \rho, \eps}$. Then the complex defined by
\begin{align*}
	\cesobds(\wedge^0 M; \K) 
		&= \left\{ \omega \in C(\wedge^0 M; \K) :
		d\omega \in L^{n}_{\rho, \eps}(\wedge^1 M; \K) \right\},\\
	\cesobds(\wedge^k M; \K) 
		&= \left\{ \omega \in L^{\frac{n}{k}}_{\rho, \eps}(\wedge^k M; \K) :
		d\omega \in L^{\frac{n}{k+1}}_{\rho, \eps}(\wedge^{k+1} M; \K) \right\} \\
		&\hspace{5cm}\text{ for } 1 \leq k \leq n-1,\\
	\cesobds(\wedge^{n} M; \K) &= L^{1}_{\rho, \eps}(\wedge^n M; \K)
\end{align*}
yields a cohomology space naturally isomorphic to $H^*(M; \K)$. We denote this cohomology space by $\cehomds{*}(M; \K)$. The space $\cehomds{*}(M; \K)$ provides both a natural wedge product in cohomology and a Banach space structure, at the cost of a parameter-dependent norm which is only quasi-preserved under conformal maps.

\medskip

In this paper, we mainly use the space $\cehoms{*}(M; \K)$. It has two important properties for us: elements of $\cesobs(\wedge^0 M; \K)$ are continuous, and $\cehoms{*}(M; \K)$ inherits a natural wedge product structure from the chain complex. The lack of these properties for $\cehom{*}(M; \K)$ makes it worse suited for the results of this paper. Moreover, the extra Banach space properties of $\cehomds{*}(M; \K)$ are unnecessary for our results, so we choose $\cehoms{*}(M; \K)$ over it to avoid the  complications involved with using $L^p_{\rho, \eps}$-norms.

As discussed before, the cohomology space $\cehoms{*}(M; \K)$ is isomorphic to $H^*(M; \K)$. Indeed, this is stated in \cite[Proposition 4.2]{Donaldson-Sullivan_Acta} in the case $n = 4$ with real coefficients, referring to sheaf-theoretic arguments for the proof. A more detailed explanation of such sheaf-theoretic arguments can be found in \cite[Section 4]{Kangasniemi-Pankka_PLMS}, where they are used to show that $\cehom{*}(M; \R)$ is isomorphic to $H^*(M; \R)$ as a graded vector space.

However, for the convenience of the reader, we provide an alternative proof using the $L^p$ Hodge decomposition, which reduces the question to the similar isomorphism result for smooth de Rham cohomology $\derham^{*}(M; \K)$. Note that the proof can also be used for $\cehom{*}(M; \K)$ with minor changes, providing a possible alternative for the discussion in \cite[Section 4]{Kangasniemi-Pankka_PLMS} in the case of closed manifolds.

\begin{lemma}\label{lem:DS_algebra_isom}
	Let $M$ be a closed, connected, oriented Riemannian $n$-manifold, and let $\K \in \{\R, \C\}$. Then the map $\derham^{*}(M; \K) \to \cehoms{*}(M; \K)$ induced by the inclusion of chain complexes $C^\infty(\wedge^* M; \K) \hookrightarrow \cesobs(\wedge^* M; \K)$ is an isomorphism of algebras.
\end{lemma}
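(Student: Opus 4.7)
The inclusion of chain complexes $C^\infty(\wedge^* M; \K) \hookrightarrow \cesobs(\wedge^* M; \K)$ respects both the differential and the wedge product, so the induced map on cohomology is automatically a graded algebra homomorphism. My plan is therefore to show that the induced map is a linear isomorphism in each degree $k$, by realizing every cohomology class --- on either side --- by its classical harmonic representative via the $L^p$-Hodge decomposition.

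For surjectivity in degree $k \in \{1, \dots, n-1\}$, given $\omega \in \cesobs(\wedge^k M; \K)$ with $d\omega = 0$, I would apply the $L^p$-Hodge decomposition at an exponent $p = n/k + \eps$ small enough that $\omega \in L^p$, writing $\omega = d\alpha + d^* \beta + \gamma$ with $\gamma$ harmonic and hence smooth. Since $d\omega = 0$, the same uniqueness argument the paper uses in the discussion preceding Lemma \ref{lem:p_harm_higher_int} forces the co-exact part $d^* \beta$ to vanish, so $\omega - \gamma = d\alpha$. The remaining task is to verify $\alpha \in \cesobs(\wedge^{k-1} M; \K)$. Taking $\alpha$ to lie in $W^{1,p}$ and invoking the Sobolev embedding $W^{1,p} \hookrightarrow L^{np/(n-p)}$ for $p < n$ places $\alpha$ in $L^{n/(k-1), \sharp}$; the endpoint $k = 1$ instead uses the Morrey-type embedding $W^{1, n, \sharp} \hookrightarrow C$ to place $\alpha$ in $C(\wedge^0 M; \K)$, and the endpoint $k = n$ is handled by working at an exponent slightly above $1$ (where the constraint $d\omega = 0$ is automatic). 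In each case $[\omega] = [\gamma]$ in $\cehoms{k}(M; \K)$ with $\gamma$ smooth, so $[\gamma]$ is pulled back from $\derham^k(M; \K)$.

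For injectivity in degree $k$, suppose $\omega \in C^\infty(\wedge^k M; \K)$ is closed and $\omega = d\tau$ for some $\tau \in \cesobs(\wedge^{k-1} M; \K)$. The classical smooth Hodge decomposition yields $\omega = H\omega + dd^* G \omega + d^* d G \omega$, where $H$ and $G$ are the harmonic projection and Green operator on smooth forms; since $d\omega = 0$, this collapses to $\omega = H\omega + d(d^* G \omega)$, with the primitive $d^* G \omega$ already smooth. So it suffices to show that the harmonic part $H\omega$ vanishes. For any smooth harmonic $\eta \in C^\infty(\wedge^k M; \K)$, the condition $d^* \eta = 0$ translates to $d(\hodge_{g_0} \eta) = 0$, so applying the weak derivative identity to $\tau$ with the smooth test form $\hodge_{g_0} \eta$ (split into real and imaginary parts when $\K = \C$) gives
\[
	\int_M \ip{\omega}{\eta}_{g_0} \vol_{g_0}
	= \int_M d\tau \wedge \hodge_{g_0} \eta
	= (-1)^k \int_M \tau \wedge d(\hodge_{g_0} \eta)
	= 0.
\]
Hence $\omega$ is Hermitian $L^2$-orthogonal to every smooth harmonic $k$-form, forcing $H\omega = 0$ and showing that $\omega$ is exact as a smooth form.

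The step I expect to require the most care is tracking the integrability of the primitive $\alpha$ through the $L^p$-Hodge decomposition in the surjectivity argument, ensuring that the improvement from $L^{n/k}$ to $L^{n/k, \sharp}$ built into the definition of $\cesobs$ precisely compensates for the standard loss in Sobolev embedding when passing from $d\alpha$ back to $\alpha$. This is exactly the reason the sharp decoration is present in the definition of the Donaldson--Sullivan complex, so the bookkeeping should close up cleanly, but verifying the endpoints $k = 1$ and $k = n$ (where Sobolev embedding passes through Hölder continuity and through the borderline $L^1$ regime respectively) is the only nontrivial analytic content of the proof.
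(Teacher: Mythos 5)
Your surjectivity argument is essentially identical to the paper's: apply the $L^p$ Hodge decomposition at the higher exponent $p > n/k$ available from membership in $L^{n/k,\sharp}$, observe the co-exact part vanishes, and recover a primitive in the right regularity class via the Sobolev--Poincar\'e inequality for $k > 1$ (and Morrey's embedding for $k = 1$). You omit the (trivial) case $k = 0$, where both sides consist exactly of the constant functions, but this is only a presentational gap.

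Your injectivity argument is correct but takes a genuinely different route. The paper takes the smooth Hodge decomposition $\omega = \alpha + \beta + \gamma$, notes that $\omega = d\tau + 0 + 0$ is also an $L^p$ Hodge decomposition, and invokes uniqueness of the $L^p$ Hodge decomposition to conclude $\beta = \gamma = 0$. You instead bypass the uniqueness theorem by applying the definition of the weak differential directly to $\tau$ with test form $\hodge_{g_0}\eta$ (a valid test since $\tau \in L^1$, $d\tau = \omega$ is smooth, and $d(\hodge_{g_0}\eta) = 0$ for $\eta$ harmonic), concluding that $\omega$ is $L^2$-orthogonal to every smooth harmonic $k$-form and hence has vanishing harmonic projection. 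Both arguments rest on the same elliptic machinery, but your version is more self-contained: it requires only the classical smooth Hodge theorem and the definition of the weak differential, rather than quoting the uniqueness statement for the $L^p$ decomposition. The paper's version is a one-line reduction once uniqueness is in hand; yours is an integration-by-parts computation. Either is perfectly acceptable.
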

\begin{proof}
	Let $\iota$ denote the map $\derham^*(M; \K) \to \cehoms{*}(M; \K)$ induced by the inclusion maps $C^\infty(\wedge^* M; \K) \hookrightarrow \cesobs(\wedge^* M; \K)$. Since the inclusion maps are linear and commute with the boundary maps $d$, the map $\iota$ is well defined and linear. Moreover, since the inclusions respect wedge products, the map $\iota$ is a homomorphism of algebras. It remains to show that $\iota$ is bijective.
		
	For surjectivity, the argument is essentially as in e.g.\ \cite[Lemma 3.3]{Kangasniemi_CohomBound}. Suppose $[\omega] \in \cehoms{k}(M; \K)$, with the goal of finding a $\omega' \in C^\infty(\wedge^k M; \K)$ with $[\omega'] = [\omega]$. Consider first the case $k > 1$. Then $\omega \in L^p(\wedge^k M; \K)$ for some $p > n/k \geq 1$. Since $d\omega = 0$, we obtain by the $L^p$ Hodge decomposition that $\omega = d\tau + \gamma$, where $\tau \in L^p(\wedge^{k-1} M; \K)$ and $\gamma$ is harmonic. By the Sobolev--Poincar\'e inequality, we may assume $\tau \in \cesobs(\wedge^{k-1} M; \K)$; see e.g. \cite{Goldshtein-Troyanov_Sobolev} for the case $\K = \R$, which also easily implies the case $\K = \C$. Hence, $\gamma$ is a smooth form contained in $[\omega]$, concluding the case. 
	
	The case $k = 1$ is done as above, by decomposing $\omega = d\tau + \gamma$ and showing that $\tau \in \cesobs(\wedge^{0} M; \K)$. However, instead of using the aforementioned Sobolev--Poincar\'e inequality to show this, we note that the 0-form $\tau$ is a Sobolev function in a space $W^{1,p}(M; \K)$ with $p > n$. Therefore $\tau$ is continuous by the Sobolev embedding theorem, and hence $\tau \in \cesobs(\wedge^0 M; \K)$. Finally, for the remaining case $k = 0$, we note that Sobolev functions with zero weak gradient are locally a.e.\ constant, see e.g. \cite[Lemma 1.16]{Heinonen-Kilpelainen-Martio_book}. Therefore, $\cesobs(\wedge^0 M; \K) \cap \ker d = C^\infty(\wedge^0 M; \K) \cap \ker d$ as they both consist of constant functions, and the remaining case $k = 0$ of surjectivity is thus complete.
		
	It remains to show injectivity. Let $\omega \in C^\infty(\wedge^k M; \K) \cap \ker d$ be such that $\omega = d\tau$ for some $\tau \in \cesobs(\wedge^{k-1} M; \K)$. Then we may take the classical (smooth) Hodge decomposition of $\omega$: $\omega = \alpha + \beta + \gamma$ where $\alpha \in dC^\infty(\wedge^{k-1} M; \K)$, $\beta \in d^*C^\infty(\wedge^{k-1} M; \K)$, and $\gamma$ is harmonic. However, now $\omega = d\tau$ and $\omega = \alpha + \beta + \gamma$ are two $L^p$ Hodge decompositions of $\omega$. By uniqueness, we conclude that $\beta = \gamma = 0$, and therefore $\omega \in dC^\infty(\wedge^{k-1} M; \K)$. Injectivity of $\iota$ follows, concluding the proof.
\end{proof}

\subsection{The $p$-harmonic representation}
Suppose then that $[g]$ is a bounded conformal structure on $M$. We now recall how, for $k \in \{0, \dots, n-1\}$, $\cehoms{k}(M; \K)$ is uniquely represented by $\pharm{g}{k}(M; \K)$. The result follows rather quickly from the facts presented in Section \ref{sect:Hodge_theory}. 

\begin{lemma}\label{lem:cohom_repr}
	Let $M$ be a closed, connected, oriented, Riemannian $n$-mani\-fold, let $[g]$ be a bounded conformal structure on $M$, and let $\K \in \{\R, \C\}$. Then for every $k \in \{0, \dots, n-1\}$ we have $\pharm{g}{k}(M; \K) \subset \cesobs(\wedge^k M; \K)$. Moreover, the map $\pharm{g}{k}(M; \K) \to \cehoms{k}(M; \K)$ which maps $\omega \in \pharm{g}{k}(M; \K)$ to its cohomology class $[\omega]$ is bijective.
\end{lemma}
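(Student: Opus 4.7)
My proof would split into three parts: the inclusion $\pharm{g}{k}(M; \K) \subset \cesobs(\wedge^k M; \K)$, and then the surjectivity and injectivity of the induced map into $\cehoms{k}(M; \K)$. Throughout, the case $k = 0$ is essentially trivial: both $\pharm{g}{0}(M; \K)$ and $\cesobs(\wedge^0 M; \K) \cap \ker d$ coincide with the (locally, hence globally) constant functions on $M$ by connectedness, so the map is the identity. I would thus focus on $k \in \{1, \dots, n-1\}$.

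For the inclusion, Lemma \ref{lem:p_harm_higher_int} yields $\omega \in L^{n/k, \sharp}(\wedge^k M; \K)$ for every $\omega \in \pharm{g}{k}(M; \K)$, and the defining relation $d\omega = 0$ places $d\omega$ in $L^{n/(k+1), \sharp}(\wedge^{k+1} M; \K)$. These are exactly the membership conditions for $\cesobs(\wedge^k M; \K)$, so the cohomology class $[\omega] \in \cehoms{k}(M; \K)$ is well-defined.

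For surjectivity, I would take a representative $\eta \in \cesobs(\wedge^k M; \K) \cap \ker d$ of a class $[\eta] \in \cehoms{k}(M; \K)$. In particular $\eta \in L^{n/k}(\wedge^k M; \K)$, so Lemma \ref{lem:harmonic_existence} produces a unique $\omega \in \pharm{g}{k}(M; \K)$ with $\eta - \omega = d\tau$ for some $\tau \in L^{n/k}(\wedge^{k-1} M; \K)$. To conclude $[\omega] = [\eta]$ in $\cehoms{k}(M; \K)$, I would need to upgrade this potential into $\cesobs(\wedge^{k-1} M; \K)$. Since $\eta - \omega \in L^{n/k, \sharp}$, this reduces to the same Sobolev regularity argument used in the surjectivity half of Lemma \ref{lem:DS_algebra_isom}: choose $p > n/k$ with $\eta - \omega \in L^p$, apply the $L^p$ Hodge decomposition, and invoke the Sobolev--Poincar\'e inequality (for $k \geq 2$) or Morrey's embedding (for $k = 1$) to extract a potential of the required integrability or continuity class.

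For injectivity, suppose $\omega \in \pharm{g}{k}(M; \K)$ satisfies $\omega = d\tau$ for some $\tau \in \cesobs(\wedge^{k-1} M; \K)$. Since $M$ is closed, the inclusion $L^{n/(k-1), \sharp} \subset L^{n/k}$ (for $k \geq 2$) or $C \subset L^{n/k}$ (for $k = 1$) places $\tau$ in $L^{n/k}(\wedge^{k-1} M; \K)$, while $\omega \in L^{n/k, \sharp} \subset L^{n/k}$. Now $\omega$ admits two decompositions of the form appearing in Proposition \ref{prop:nonlinear_Hodge_decomposition}: the $p$-harmonic decomposition $\omega = 0 + 0 + \omega$, and the exact decomposition $\omega = d\tau + 0 + 0$. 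The uniqueness of $d\alpha$, $d\beta$, and $\gamma$ in Proposition \ref{prop:nonlinear_Hodge_decomposition} forces the $p$-harmonic part $\gamma$ to be simultaneously $\omega$ and $0$, yielding $\omega = 0$. The main obstacle throughout is the upgrade of the potential in the surjectivity step, but the relevant machinery is already deployed in the proof of Lemma \ref{lem:DS_algebra_isom}; the injectivity argument, by contrast, is a clean one-step consequence of the uniqueness in the conformal Hodge decomposition.
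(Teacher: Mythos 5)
Your proposal is correct and follows essentially the same route as the paper: Lemma~\ref{lem:p_harm_higher_int} for the inclusion, and Lemma~\ref{lem:harmonic_existence} together with the $L^p$ Hodge decomposition and Sobolev regularity (Sobolev--Poincar\'e for $k \geq 2$, Sobolev/Morrey embedding for $k = 1$) to upgrade the potential for bijectivity. The paper detours through a smooth representative of the class (via Lemma~\ref{lem:DS_algebra_isom}) and reads injectivity directly off the uniqueness clause of Lemma~\ref{lem:harmonic_existence}, whereas you work with an arbitrary $\cesobs$ representative and invoke the uniqueness in Proposition~\ref{prop:nonlinear_Hodge_decomposition}; these are cosmetic repackagings of the same underlying argument.
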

\begin{proof}
	The case $k = 0$ is clear since $\pharm{g}{0}(M; \K)$ consists of only the constant functions, and every cohomology class of $\cehoms{0}(M; \K)$ is a singleton of a constant function.
	
	Let now $k \in \{1, \dots, n-1\}$, and let $\omega \in \pharm{g}{k}(M; \K)$. By Lemma \ref{lem:p_harm_higher_int}, we have $\omega \in L^{n/k, \sharp}(\wedge^k M; \K)$. Moreover, we have $d\omega = 0 \in L^{n/(k+1), \sharp}(\wedge^k M; \K)$. Hence, $\omega \in \cesobs(\wedge^k M; \K)$, and it belongs to a cohomology class $[\omega] \in \cehoms{k}(M; \K)$.
	
	Suppose then that $c$ is a cohomology class in $\cehoms{k}(M; \K)$. By Lemma \ref{lem:DS_algebra_isom}, $c = [\omega_0]$ for some smooth element $\omega_0 \in C^\infty(\wedge^k M; \K)$. By Lemma \ref{lem:harmonic_existence}, there exists a unique $\omega \in \pharm{g}{k}(M; \K)$ such that $\omega - \omega_0 = d\tau$. Since $\omega, \omega_0 \in L^{n/k, \sharp}(\wedge^k M; \K)$, we obtain similarly as in the proof of Lemma \ref{lem:DS_algebra_isom} that $d\tau \in d\cesobs(\wedge^{k-1} M; \K)$, by using either the Sobolev-Poincar\'e inequality if $k > 1$ or the Sobolev embedding theorem is $k = 1$. Hence, there exists a unique $\omega \in c \cap \pharm{g}{k}(M; \K)$, which shows the bijectivity part of the claim.
\end{proof}

We then separately consider the special case $\pharm{g}{n}(M; \K)$, which we have only defined for conformally formal $[g]$.

\begin{lemma}\label{lem:cohom_repr_n}
	Let $M$ be a closed, connected, oriented, Riemannian $n$-mani\-fold, and let $[g]$ be a conformally $\K$-formal bounded conformal structure on $M$, where $\K \in \{\R, \C\}$. Suppose that $\pharm{g}{k}(M; \K) \neq \{0\}$ for some $k \in \{1, \dots, n-1\}$. Then $\pharm{g}{n}(M; \K) \subset \cesobs(\wedge^k M)$, and the map $\pharm{g}{n}(M; \K) \to \cehoms{n}(M; \K)$ which maps $\omega \in \pharm{g}{n}(M; \K)$ to its cohomology class $[\omega]$ is bijective.
\end{lemma}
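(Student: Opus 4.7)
The plan is to reduce the claim to the structural description of $\pharm{g}{n}(M; \K)$ given by Lemma \ref{lem:Hn_representation}, then combine it with higher integrability of $p$-harmonic forms and a standard integration-pairing argument for top cohomology.

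First, fix any $\omega \in \pharm{g}{k}(M; \K) \setminus \{0\}$ provided by the hypothesis, and apply Lemma \ref{lem:Hn_representation} to conclude that every element of $\pharm{g}{n}(M; \K)$ has the form $C \abs{\omega}_g^{n/k} \vol_g$ with $C \in \K$. In particular $\pharm{g}{n}(M; \K)$ is a one-dimensional $\K$-vector space, with the nonzero generator $\eta_0 = \abs{\omega}_g^{n/k} \vol_g$.

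To see the inclusion $\pharm{g}{n}(M;\K) \subset \cesobs(\wedge^n M; \K) = L^{1, \sharp}(\wedge^n M; \K)$, I would invoke Lemma \ref{lem:p_harm_higher_int} to get $\omega \in L^{n/k, \sharp}(\wedge^k M; \K)$, so that $\abs{\omega}_g \in L^{q}(M)$ for some $q > n/k$. Using comparability \eqref{eq:norm_comparison} of the point-wise norms with $g_0$ (after replacing $g$ with the unique element of $[g]$ satisfying $\vol_g = \vol_{g_0}$), it follows that $\smallabs{\eta_0}_{g_0} \lesssim \abs{\omega}_g^{n/k} \in L^{qk/n}(M)$ with $qk/n > 1$. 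Hence $\eta_0 \in L^{1, \sharp}(\wedge^n M; \K)$, and by scalar-multiplication invariance the whole space $\pharm{g}{n}(M; \K)$ lies in $\cesobs(\wedge^n M; \K)$.

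For the cohomological assignment $\omega \mapsto [\omega]$, linearity is automatic. Since $M$ is closed, connected, and oriented, Lemma \ref{lem:DS_algebra_isom} and the standard isomorphism $\derham^n(M; \K) \cong \K$ give $\cehoms{n}(M; \K) \cong \K$, i.e.\ both source and target are one-dimensional over $\K$, so it suffices to show that the map is nonzero on the generator $\eta_0$. For this, I would argue that $[\eta_0] \neq 0$ by testing against the fundamental class: if $\eta_0 = d\tau$ for some $\tau \in \cesobs(\wedge^{n-1} M; \K)$, then the weak Stokes-type identity defining $d$ forces $\int_M \eta_0 = 0$. But $\omega \not\equiv 0$ implies $\abs{\omega}_g > 0$ on a set of positive measure, whence
\[
\int_M \eta_0 = \int_M \abs{\omega}_g^{n/k} \vol_g > 0,
\]
a contradiction. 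Hence the induced map is injective, and by equality of dimensions it is bijective.

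The only mildly delicate step I anticipate is the Stokes-type testing in the last paragraph: elements of $\cesobs(\wedge^{n-1} M; \K)$ are merely in $L^{n/(n-1), \sharp}$, so pairing $d\tau$ against the constant function $1$ requires a short approximation or duality argument (or invoking the isomorphism with $\derham^n$ one more time via a smooth representative of $[\eta_0]$). Everything else is a direct application of the results already established in Sections \ref{sect:conf_formality}, \ref{sect:Hodge_theory}, and the earlier part of this section.
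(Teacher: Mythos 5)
Your proposal is correct and follows essentially the same route as the paper's proof: Lemma~\ref{lem:Hn_representation} gives the explicit one-dimensional description of $\pharm{g}{n}(M;\K)$, Lemma~\ref{lem:p_harm_higher_int} together with \eqref{eq:norm_comparison} yields the $L^{1,\sharp}$ inclusion, and a Stokes argument shows the induced map to $\cehoms{n}(M;\K)\cong\K$ is nonzero (the paper phrases injectivity via the sign condition defining $\pharmgeq{g}{n}$, whereas you evaluate the integral of the explicit generator $\eta_0$, but these are the same idea). The cautionary note at the end is unnecessary: in the paper's definition of the weak differential, the test forms for an $(n-1)$-form $\tau$ are smooth $0$-forms, so taking $\eta\equiv 1$ gives $0=\int_M\tau\wedge d\eta=(-1)^{n}\int_M d\tau$ directly, with the required integrability already guaranteed since $d\tau\in L^{1,\sharp}\subset L^1$.
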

\begin{proof}
	Let $g_0$ be a smooth Riemannian metric on $M$, in which case we have $d([g], [g_0]) < \infty$. Let $g \in [g]$ be such that $\vol_g = \vol_{g_0}$. By our assumptions, there exists an $\omega' \in \pharm{g}{k}(M; \K) \setminus \{0\}$ for some $k \in \{1, \dots, n-1\}$. By Lemma \ref{lem:Hn_representation}, elements of $\pharm{g}{n}(M; \K)$ are of the form $\omega = C \abs{\omega'}_g^{n/k} \vol_g$ for $C \in \R$. Since $\omega' \in L^{n/k, \sharp}(\wedge^k M; \K)$ by Lemma \ref{lem:p_harm_higher_int}, we obtain by \eqref{eq:norm_comparison} that $\omega \in L^{1, \sharp}(\wedge^n M; \K) = \cesobs(\wedge^n M; \K)$.
	
	Hence, we have a well-defined map $\pharm{g}{n}(M; \K) \to \cehoms{n}(M; \K)$ by $\omega \mapsto [\omega]$. Since both $\pharm{g}{n}(M; \K)$ and $\cehoms{n}(M; \K) \cong H^n(M; \K)$ are 1-dimensional $\K$-vector spaces, it suffices to show that this map is injective. Suppose then that $\omega \in \pharm{g}{n}(M; \K)$ and $[\omega] = [0]$. Then $\omega = d\tau$ for some $\tau \in \cesobs(\wedge^{n-1} M; \K)$. By a measurable Stokes' theorem, it follows that $\int_M \omega = 0$. However, since $\omega \in \pharmgeq{g}{n}(M; \K)$, we have that $\hodge_g \omega$ remains on a fixed half-line starting from 0. Hence, $\int_M \omega = 0$ is only possible if $\omega = 0$, and the proof is concluded.
\end{proof}

\subsection{The embedding theorem}

We are now ready to prove Theorems \ref{thm:conf_formal_algebra_embedding} and \ref{thm:conf_formal_clifford_embedding}, providing our main topological obstruction to the conformal formality of a manifold. We first recall the statements of the theorems by giving a version which includes the case $\K = \C$, as well as both the wedge product and Clifford product cases.

\begin{thm}\label{thm:conf_formal_algebra_embedding_C}
	Let $M$ be a closed, connected, oriented, smooth $n$-manifold. Suppose that $M$ is conformally $\K$-formal, where $\K \in \{\R, \C\}$. Then there exists an embedding of graded algebras $\Phi \colon H^*(M; \K) \to \wedge^* \K^n$ which maps the cup product to the exterior product. The map $\Phi$ can be selected as such that its image is closed under the Hodge star. Moreover, if $M$ is conformally $\K$-formal in the Clifford sense, then $\Phi$ can also be selected as such that its image is closed under the Euclidean Clifford product of $\wedge^* \K^n$.
\end{thm}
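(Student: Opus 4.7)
The plan is to realize the embedding as pointwise evaluation at a carefully chosen base point. By combining Lemmas \ref{lem:cohom_repr}, \ref{lem:cohom_repr_n} and \ref{lem:DS_algebra_isom}, the cohomology $H^*(M; \K)$ under cup product is isomorphic as a graded algebra to $\pharm{g}{*}(M; \K)$ under wedge product, so it suffices to construct a graded algebra embedding $\Psi \colon \pharm{g}{*}(M; \K) \hookrightarrow \wedge^* \K^n$ with the required closure properties, and then precompose. We may assume $\pharm{g}{n}(M; \K) \neq \{0\}$: either some intermediate space $\pharm{g}{k}(M; \K)$ with $1 \leq k \leq n-1$ is nontrivial, in which case Lemma \ref{lem:Hn_representation} already guarantees this, or all $H^k(M; \K)$ with $0 < k < n$ vanish and $\Psi$ can be built by hand in degrees $0$ and $n$ by sending a generator of $H^n(M; \K)$ to $e_1 \wedge \cdots \wedge e_n$.

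Fix $g \in [g]$. Since $\pharm{g}{*}(M; \K)$ is finite-dimensional by the above correspondence with $H^*(M; \K)$, we may pick a graded basis $\{\omega_i\}_{i \in I}$, producing finitely many a.e.\ structure identities $\omega_i \wedge \omega_j = \sum_l c^{ij}_l \omega_l$, and let $\rho_g$ be the function provided by Lemma \ref{lem:abs_value_rigidity}. Choose a point $x_0 \in \spt \rho_g$ which is simultaneously a Lebesgue point of $g$, of each $\omega_i$, and of $\rho_g$, and at which all the product identities hold; such an $x_0$ exists since the exceptional set is a finite union of null sets, and $\spt \rho_g$ has positive measure by Corollary \ref{cor:same_support} together with our standing assumption. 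An oriented $g_{x_0}$-orthonormal basis of $T^*_{x_0} M$ then furnishes an isometric isomorphism $\wedge^* T^*_{x_0} M \otimes \K \cong \wedge^* \K^n$ that identifies $\hodge_{g_{x_0}}$ with the Euclidean Hodge star, and likewise identifies $\cdot_{g_{x_0}}$ with the Euclidean Clifford product.

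Define $\Psi(\omega) = \omega(x_0)$. Linearity, preservation of grading, and preservation of the wedge product are immediate, as these are pointwise operations. For injectivity, if $\omega \in \pharm{g}{k}(M; \K)$ with $k \geq 1$ satisfies $\Psi(\omega) = 0$, then by Lemma \ref{lem:abs_value_rigidity} we have $B \rho_g(x_0)^k = \abs{\omega(x_0)}_{g_{x_0}} = 0$ with $\rho_g(x_0) > 0$, forcing $B = 0$ and hence $\omega = 0$. Closure of $\im \Psi$ under the Hodge star follows from Lemma \ref{lem:conformal_hodge}(ii): the element $\abs{\omega}_g^{n/k - 2} \hodge_g \omega$ lies in $\pharm{g}{n-k}(M; \K)$, and evaluating at $x_0$ yields a scalar multiple of $\hodge \Psi(\omega)$ lying in $\im \Psi$, so $\hodge \Psi(\omega) \in \im \Psi$ by linearity; the remaining degrees $0$ and $n$ are covered by the correspondence $1 \leftrightarrow \vol_g$. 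In the Clifford setting, the scaling in \eqref{eq:scaled_clifford_def} together with Lemma \ref{lem:abs_value_rigidity} shows that $\dfpart{\omega_1 \cdot_g \omega_2}_k$ is a scalar multiple of $\dfpart{\omega_1 \odot_g \omega_2}_k$, and closure of $\pharm{g}{*}(M; \K)$ under $\odot_g$ therefore transfers, after evaluation at $x_0$, to closure of $\im \Psi$ under the Euclidean Clifford product. The principal delicacy is that the forms in $\pharm{g}{k}(M; \K)$ are only measurable, so pointwise evaluation must be understood via Lebesgue points; the fact that all necessary identities hold simultaneously at a single point $x_0$ relies on the finite-dimensionality of $\pharm{g}{*}(M; \K)$.
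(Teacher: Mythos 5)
Your approach is essentially the paper's: reduce to $\pharm{g}{*}(M;\K)$ via Lemmas \ref{lem:DS_algebra_isom}, \ref{lem:cohom_repr}, and \ref{lem:cohom_repr_n}, find a single point $x_0$ in the common support where finitely many a.e.\ structure identities hold simultaneously, and take the embedding to be evaluation at $x_0$ followed by the isometric identification $\wedge^* T^*_{x_0} M \otimes \K \cong \wedge^* \K^n$. The overall architecture and the list of key lemmas you invoke match the paper's proof.

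There is one genuine gap worth flagging: your injectivity argument applies Lemma \ref{lem:abs_value_rigidity} to an \emph{arbitrary} $\omega \in \pharm{g}{k}(M;\K)$ and reads off $\abs{\omega(x_0)}_{g_{x_0}} = B\rho_g(x_0)^k$ at the specific point $x_0$. But the a.e.\ equality $\abs{\omega}_g = B\rho_g^k$ has an exceptional null set that depends on $\omega$, and since there are uncountably many $\omega$ you cannot ensure all of these identities survive at the single $x_0$ you picked; the conditions you explicitly impose on $x_0$ (Lebesgue points of $g$, of the $\omega_i$, and of $\rho_g$, and the structure constants $c^{ij}_l$) do not by themselves guarantee that the norm identity for a general linear combination $\sum a_i \omega_i$ holds at $x_0$. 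The paper avoids this by first choosing the graded basis to be orthonormal with respect to the pointwise-constant inner product $\ip{\cdot}{\cdot}_{\cH}$ supplied by Lemma \ref{lem:abs_value_rigidity}, and then including among the finitely many a.e.\ conditions that the basis images $(\omega_{k,i})_{x_0}$ are pairwise $\ip{\cdot}{\cdot}_{g_{x_0}}$-orthogonal and nonzero (the sets $S_1$ and $S_2$); injectivity of the resulting linear map is then immediate. You can repair your argument along these lines by adding the finitely many inner-product identities $\ip{\omega_i}{\omega_j}_g = A_{ij}\rho_g^{2k}$ to your $x_0$-conditions and expanding $\abs{\sum a_i\omega_i}_g^2$ bilinearly, but as written the claim that the rigidity identity holds at $x_0$ for arbitrary $\omega$ is not justified. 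A similar remark applies to your Hodge-star and Clifford-closure steps: these should be verified on the chosen basis elements and then extended by linearity of $\hodge_{g_{x_0}}$ and bilinearity of $\cdot_{g_{x_0}}$, rather than asserted directly for general $\omega$; this is precisely what the sets $S_3$, $S_4$, $S_5$ in the paper's proof accomplish.
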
 

The argument is essentially the same as in \cite{Kangasniemi_CohomBound}, but additional attention has been given to the extra product structure provided by our assumption. Lemma \ref{lem:abs_value_rigidity} gives us the necessary information to perform the argument.

\begin{proof}
	We may assume $H^k(M; \K) \neq \{0\}$ for some $k \in \{1, \dots, n-1\}$. Indeed, otherwise $H^*(M; \K) \cong H^*(\S^n; \K)$ which embeds into $\wedge^* \K^n$ in the desired way. Let $[g]$ be a conformally $\K$-formal bounded conformal structure on $M$.
	
	By classical de Rham theory, we have $\derham^*(M; \K) \cong H^*(M; \K)$ as algebras, where in this isomorphism, a smooth de Rham cohomology class $[\omega]$ is mapped to integration of $\omega$ over simplicial chains. By Lemma \ref{lem:DS_algebra_isom}, we have $\cehoms{*}(M; \K) \cong \derham^*(M; \K)$. Moreover, by Lemmas \ref{lem:cohom_repr} and \ref{lem:cohom_repr_n}, along with the conformal $\K$-formality of $[g]$, we have $\pharm{g}{*}(M; \K) \cong \cehoms{*}(M)$ by the map $\omega \mapsto [\omega]$. 
	
	It therefore remains to find a suitable embedding $\pharm{g}{*}(M; \K)  \to \wedge^* \K^n$. The idea is to map $\omega \mapsto \omega_x \in \wedge^* T_x^* M \otimes \K \cong \wedge^* \K^n$ for some $x$ satisfying $\rho_g(x) > 0$, and then to use Lemma \ref{lem:abs_value_rigidity} in order to see that this map is injective. The entire remainder of the proof is then merely working around the technicality that our objects are measurable, and hence equalities such as the one given by Lemma \ref{lem:abs_value_rigidity} hold only almost everywhere.
	
	By Corollary \ref{cor:same_support}, all elements $\omega \in \pharm{g}{*}(M; \K)$ share the same support, which we denote $S \subset M$ By Lemma \ref{lem:cohom_repr} and the assumption $H^k(M; \K) \neq \{0\}$, there exists a non-zero element $\omega \in \pharm{g}{*}(M; \K)$ for some $k > 0$. Hence, the set $S$ must necessarily have positive measure.
	
	Note that $\pharm{g}{*}(M; \K)$ is finite dimensional due to being isomorphic to $H^*(M; \R)$, which for closed $M$ is finite-dimensional. We define an inner product $\ip{\cdot}{\cdot}_{\cH}$ on $\pharm{g}{k}(M; \K)$ for every $k \in \{1, \dots, n\}$: for any $\omega_1, \omega_2 \in \pharm{g}{k}(M; \K)$, we let $\ip{\omega_1}{\omega_2}_{\cH}$ be the almost everywhere constant value which the function $\rho^{-2k} \ip{\omega_1}{\omega_2}_g$ assumes on $S$. 
	
	By finite-dimensionality of $\pharm{g}{*}(M; \K)$, we may then select for every $k \in \{1, \dots, n\}$ an orthonormal basis $\{\omega_{k,1}, \dots, \omega_{k,m_k}\}$ of $\pharm{g}{k}(M; \K)$. For $k = 0$, we simply select as the single basis element $\omega_{0,1}$ the constant function $x \mapsto 1$.
	
	We now wish to select a point in $S$ which satisfies specific conditions. We do this by defining sets $S_i \subset S$ corresponding to each desired condition. First, we let $S_1$ be the set of $x \in S$ for which $(\omega_{k,j})_x \neq 0$ for all $k \in \{1, \dots, n\}$ and $j \in \{1, \dots, m_k\}$. For a given pair of indices $(j, k)$, this holds for a.e.\ $x \in S$ since $\spt \omega_{k,j} = S$. Since there are only finitely many such pairs of indices, we have that $S \setminus S_1$ has measure zero.
	
	We then define $S_2$ as the set of points $x \in S$ for which  $\ip{(\omega_{k,i})_x}{(\omega_{k,j})_x}_g = 0$ for all $k \in \{1, \dots, n\}$ and $i,j \in \{1, \dots, m_k\}$. By Lemma \ref{lem:abs_value_rigidity} and the orthonormality of the basis elements under $\ip{\omega_1}{\omega_2}_{\cH}$, this condition holds for almost every $x \in S$ for every such set of indices $(i, j, k)$. Since there are again only finitely many such sets of indices, we have that $S \setminus S_2$ has measure zero.
	
	Suppose then that $k, l \in \{0, \dots, n\}$, $i \in \{1, \dots, m_k\}$ and $j \in \{1, \dots, m_l\}$. Then, since $\pharm{g}{*}(M; \K)$ is closed under $\wedge$, we have
	\begin{equation}\label{eq:S3_eq}
		(\omega_{k,i}) \wedge (\omega_{l,j})
		= a_{i,j,k,l,1} (\omega_{k+l, 1}) + \dots + a_{i,j,k,l,m_{k+l}} (\omega_{k+l, m_{k+l}})
	\end{equation}
	a.e.\ on $M$, for some scalar coefficients $a_{i,j,k,l,1}, \dots, a_{i,j,k,l,m_{k+l}} \in \K$. We let $S_3$ be the set of $x \in S$ at which \eqref{eq:S3_eq} holds for every set of indices $(i, j, k, l)$ as above. Since there are again only finitely many such sets of indices, we have again that $S \setminus S_3$ has zero measure.
	
	Similarly, suppose that $k \in \{1, \dots, n\}$ and $i \in \{1, \dots, m_k\}$. Then we have by Lemma \ref{lem:conformal_hodge} that
	\begin{equation}\label{eq:S4_eq}
		\abs{(\omega_{k,i})}_g^{\frac{n}{k} - 2} \hodge_g \omega_{k,i}
		= b_{i,k,1} \omega_{n-k, 1} + \dots + b_{i,k,m_{n-k}} \omega_{n-k, {m_{n-k}}}
	\end{equation}
	a.e.\ on $M$, for some scalar coefficients $b_{i,k,1}, \dots, b_{i,k,m_{n-k}} \in \K$. We again select $S_4$ to be the set of $x \in S$ at which \eqref{eq:S4_eq} holds for every set of indices $(i, k)$ as above. As previously, we have that $S \setminus S_4$ has zero measure.
	
	Finally, if $[g]$ is not conformally $\K$-formal in the Clifford sense, we select $S_5 = S$. If on the other hand $[g]$ is conformally $\K$-formal in the Clifford sense, then for all $j, k, l \in \{0, \dots, n\}$, $i \in \{1, \dots, m_k\}$ and $i' \in \{1, \dots, m_k\}$, we have
	\begin{equation}\label{eq:S5_eq}
		\dfpart{\omega_{k,i} \odot_g \omega_{l,i'}}_j
		= c_{i, i', j, k, l, 1} \omega_{j, 1} + \dots + c_{i, i', j, k, l, m_j} \omega_{j, m}
	\end{equation}
	a.e.\ on $M$, for some coefficients $c_{i, i', j, k, l, 1}, \dots, c_{i, i', j, k, l, m_j} \in \K$. In this case, we again select $S_5$ to be the set of points of $S$ at which \eqref{eq:S5_eq} holds for every set of indices $(i, i', j, k, l)$ as above, and again obtain that $S \setminus S_5$ is of measure zero.
	
	Now, since $S$ is of positive measure, and since $S \setminus S_i$ is of measure zero for every $i \in \{1, \dots, 5\}$, the set $S' = S_1 \cap S_2 \cap \dots \cap S_5$ is nonempty. Hence, we may select a $x \in S'$.
	
	We then define a function $\kappa \colon \pharm{g}{*}(M; \K) \to (\wedge^* T_x^* M)\otimes \K$ by setting $\kappa(\omega_{k,j}) = (\omega_{k,j})_x$ for all basis elements $\omega_{k,j}$, and then by extending linearly. Since $x \in S_1 \cap S_2$, $\kappa$ maps the basis of every $\pharm{g}{k}(M; \K)$ to a set of nonzero elements in $(\wedge^k T_x^* M) \otimes \K$ which are pairwise orthogonal in $\ip{\cdot}{\cdot}_g$. Therefore, $\kappa$ is injective. Since $x \in S_3$, the map $\kappa$ preserves the wedge product. Hence, $\kappa$ is an injective homomorphism of algebras.
	
	Since $x \in S_4$, the image set $\im \kappa$ has a basis which is mapped inside $\im \kappa$ under the map $\alpha \mapsto \abs{\alpha}_g^{n/k - 2} \hodge_g \alpha$. Since $\im \kappa$ is closed under scalar multiplication, this same basis is also mapped inside $\im \kappa$ by the Hodge star $\hodge_g$. By the linearity of $\hodge_g$, it follows that $\im \kappa$ is closed under $\hodge_g$.
	
	Similarly, suppose that $[g]$ is conformally $\K$-formal in the Clifford sense. Then, since $x \in S_5$, the set $\im \kappa$ has a basis which is mapped inside $\im \kappa$ by the scaled Clifford product $\odot_g$. Thus, since $\im \kappa$ is closed under scalar multiplication, the basis is also mapped inside $\im \kappa$ by the Clifford product $\cdot_g$. Hence, the bilinearity of $\cdot_g$ then implies that $\im \kappa$ is closed under $\cdot_g$.
	
	Finally, we define $\Phi = \Theta \circ \kappa$, where $\Theta \colon (\wedge^* T_x^* M) \otimes \K \to \wedge^* \K^n$ is an orientation-preserving isometric isomorphism of algebras which maps the real part $(\wedge^* T_x^* M) \otimes \R \subset (\wedge^* T_x^* M) \otimes \K$ into $\R^n \subset \K^n$. Since $\Theta$ preserves all relevant properties, the claim follows.
\end{proof}

\section{Consequences of the embedding theorem}\label{sect:consequences}

In this section, we discuss some of the cohomological obstructions for conformal formality implied by Theorems \ref{thm:conf_formal_algebra_embedding} and \ref{thm:conf_formal_clifford_embedding}. In particular, we give a full counterpart to Kotschick's \cite[Theorem 6]{Kotschick_duke} from the geometrically formal theory, as well as explain how the obstruction discussed in Theorem \ref{thm:4_manifold_uqr_obstruction} is obtained from Theorem \ref{thm:conf_formal_clifford_embedding}.

We restrict our discussion in this section to the case $\K = \R$. However, since the complex versions of conformal formality imply the corresponding real ones, every result therefore also immediately applies to the case $\K = \C$. Throughout this section, we denote the standard basis of $\R^n$ by $e_1, \dots, e_n$, and use the shorthand $e_{i_1 i_2 \ldots i_l} = e_{i_1} \wedge e_{i_2} \wedge \dots \wedge e_{i_l}$ for $i_1, \dots, i_l \in \{1, \dots, n\}$.

\subsection{Obstructions for conformal formality}

Let $M$ be a closed, connected, and oriented smooth $n$-manifold. Recall that the \emph{Betti numbers} $b_i$ of $M$ are defined by $b_i = \dim H^i(M; \R)$ for $i \in \{0, \dots, n\}$. An immediate corollary of Theorem \ref{thm:conf_formal_algebra_embedding} is a version of the cohomological dimension bound discussed in the main theorems of \cite{Prywes_Annals} and \cite{Kangasniemi_CohomBound}, as well as in \cite[Theorem 6]{Kotschick_duke}.

\begin{cor}
	Let $M$ be a closed, connected, and oriented smooth $n$-mani\-fold. Suppose that $M$ is conformally formal. Then for every $k \in \{0, \dots, n\}$, the $k$:th Betti number $b_k$ of $M$ satisfies 
	\[
		b_k \leq \binom{n}{k}.
	\]
\end{cor}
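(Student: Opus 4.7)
The plan is to simply apply Theorem \ref{thm:conf_formal_algebra_embedding} and read off the statement from the dimensions of the graded pieces. Since $M$ is conformally formal, Theorem \ref{thm:conf_formal_algebra_embedding} supplies an embedding of graded algebras
\[
	\Phi \colon H^*(M; \R) \hookrightarrow \wedge^* \R^n.
\]
Because $\Phi$ is a morphism of graded algebras, it restricts, for each $k \in \{0, \dots, n\}$, to an injective $\R$-linear map
\[
	\Phi_k \colon H^k(M; \R) \hookrightarrow \wedge^k \R^n.
\]

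Taking $\R$-dimensions on both sides and using $\dim_\R \wedge^k \R^n = \binom{n}{k}$, we obtain
\[
	b_k = \dim_\R H^k(M; \R) \leq \dim_\R \wedge^k \R^n = \binom{n}{k},
\]
which is the desired inequality.

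There is essentially no obstacle here: all the analytic and algebraic work has been done in establishing Theorem \ref{thm:conf_formal_algebra_embedding}, and the corollary is a formal consequence of the grading. The only minor point worth noting in the write-up is that the inequality holds trivially at the extremal indices $k = 0$ and $k = n$, where $b_k \in \{0, 1\}$ and $\binom{n}{k} = 1$, so no case distinction is actually required.
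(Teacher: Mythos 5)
Your proof is correct and follows exactly the same approach as the paper's: apply Theorem \ref{thm:conf_formal_algebra_embedding} and compare dimensions of the graded pieces. The closing remark about the extremal indices $k=0$ and $k=n$ is harmless but unnecessary, since the injectivity-of-$\Phi_k$ argument already covers all $k$ uniformly.
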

\begin{proof}
	Using the graded embedding $\Phi \colon H^*(M; \R) \to \wedge^* \R^n$ of Theorem \ref{thm:conf_formal_algebra_embedding}, we see that $b_k = \dim \Phi(H^k(M; \R)) \leq \dim \wedge^k \R^n = \binom{n}{k}$.
\end{proof}

Similarly, following the argument of another part of \cite[Theorem 6]{Kotschick_duke}, we obtain the following restriction on the first Betti number.

\begin{cor}
	Let $M$ be a closed, connected, and oriented smooth $n$-mani\-fold. Suppose that $M$ is conformally formal. Then the first Betti number $b_1$ of $M$ satisfies 
	\[
		b_1 \neq n-1.
	\]
\end{cor}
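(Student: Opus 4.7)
The plan is to proceed by contradiction, assuming $b_1 = n - 1$ and deriving an inconsistency from Theorem \ref{thm:conf_formal_algebra_embedding}. For $n \leq 2$ the statement is vacuous, as $\S^1$ has $b_1 = 1 \neq 0$ and closed oriented surfaces have even first Betti number, so I would assume $n \geq 3$. By Theorem \ref{thm:conf_formal_algebra_embedding} there is a graded algebra embedding $\Phi \colon H^*(M; \R) \hookrightarrow \wedge^* \R^n$ whose image is closed under the Hodge star.

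My first step is to normalize the embedding. Since $\Phi(H^1(M; \R))$ is an $(n-1)$-dimensional subspace of $\R^n$, I pick an orthonormal basis for it and complete to a positively oriented orthonormal basis of $\R^n$. The resulting element of $\ort(n)$ lies in $SO(n)$ after possibly flipping the last vector, and it induces an automorphism of $\wedge^* \R^n$ that preserves both the exterior product and the Hodge star; post-composing $\Phi$ with this automorphism reduces the problem to the case $\Phi(H^1(M; \R)) = \Span(e_1, \ldots, e_{n-1})$.

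The core of the argument is then to extract two incompatible descriptions of $\Phi(H^{n-1}(M; \R))$. Since $\Phi$ is a ring homomorphism, the image contains $e_1 \wedge \cdots \wedge e_{n-1}$, which therefore lies in $\Phi(H^{n-1}(M; \R))$. On the other hand, because the image of $\Phi$ is graded and Hodge-closed, and $\hodge$ is a bijection $\wedge^1 \R^n \to \wedge^{n-1} \R^n$, one obtains $\Phi(H^{n-1}(M; \R)) = \hodge \Phi(H^1(M; \R)) = \Span(\hodge e_1, \ldots, \hodge e_{n-1})$. But for each $i \leq n-1$ the form $\hodge e_i$ is, up to sign, the wedge of the basis covectors $e_1, \ldots, e_n$ omitting $e_i$, and therefore has $e_n$ as a factor; hence every element of the span $\Span(\hodge e_1, \ldots, \hodge e_{n-1})$ is divisible by $e_n$, while $e_1 \wedge \cdots \wedge e_{n-1}$ is not, giving the contradiction.

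I expect no real obstacle beyond a small verification that the normalization legitimately preserves the structure of Theorem \ref{thm:conf_formal_algebra_embedding}: namely, that an element of $SO(n)$ induces an automorphism of $\wedge^* \R^n$ commuting with $\hodge$, which is standard since the Hodge star is determined by the inner product together with the volume form, and both are preserved under $SO(n)$.
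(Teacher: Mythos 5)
Your proof is correct and rests on the same core observation as the paper's: closedness of $\im\Phi$ under $\hodge$ forces $\hodge(v_1\wedge\cdots\wedge v_{n-1})$ into $\Phi(H^1(M;\R))$, where it is simultaneously orthogonal to everything, a contradiction. The paper argues directly with an arbitrary basis $v_1,\dots,v_{n-1}$ via inner products and thus skips your $SO(n)$-normalization; your ``divisible by $e_n$'' reformulation is the Hodge-dual phrasing of the same obstruction.
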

\begin{proof}
	Let $\Phi \colon H^*(M; \R) \to \wedge^* \R^n$ be the graded embedding of Theorem \ref{thm:conf_formal_algebra_embedding}. Suppose towards contradiction that $\dim \Phi(H^1(M; \R)) = n-1$. Let $v_1, \dots, v_{n-1}$ be a basis of $\Phi(H^1(M; \R))$. Since $\Phi(H^*(M; \R))$ is closed under $\hodge$ and $\wedge$, we obtain that $v_n = \hodge(v_1 \wedge \dots \wedge v_{n-1}) \in \Phi(H^1(M; \R))$.
	
	However, we have $\ip{v_n}{v_i}e_{12\ldots n} = v_i \wedge \hodge{v_n} = (-1)^{n-1} v_i \wedge (v_1 \wedge \dots \wedge v_{n-1}) = 0$ for every $i \in \{1, \dots, n-1\}$. Since $v_n \in \Span(v_1, \dots, v_{n-1})$ this is only possible if $v_n = 0$. But this is a contradiction, since $v_1 \wedge \dots \wedge v_{n-1} \neq 0$.
\end{proof}

The final part of \cite[Theorem 6]{Kotschick_duke} concerns the splitting of the middle Betti number $b_{2m}$ of a $4m$-manifold. In order to prove our counterpart to it, we briefly recall the definition of this split. Suppose that $\sigma \colon V \times V \to \R$ is a symmetric bilinear form on a finite-dimensional vector space. Then there exists a \emph{$\sigma$-orthogonal} basis $\{v_1, \dots, v_l\}$ of $V$, that is, a basis such that $\sigma(v_i, v_j) = 0$ whenever $i \neq j$. The \emph{signature} of $\sigma$ is the triple $(s_0, s_+, s_-)$, consisting of the numbers of basis elements $v_i$ for which $\sigma(v_i, v_i) = 0$, $\sigma(v_i, v_i) > 0$, and $\sigma(v_i, v_i) < 0$, respectively. Moreover, the signature is independent on the choice of such basis $\{v_1, \dots, v_l\}$.

Suppose then that $M$ is a closed, connected, oriented $4m$-manifold. The \emph{intersection form} of $M$ is a bilinear form $I \colon H^{2m}(M; \R) \times H^{2m}(M; \R) \to \R$ defined by $I(u, u')V = u \cup u'$, where $V$ is the orientation class of $M$. Since $2m$ is even, $I$ is symmetric, and moreover, the first component of the signature of $I$ vanishes due to Poincar\'e duality. Hence, we may write the signature of $I$ as $(0, b_{2m}^+, b_{2m}^-)$, and obtain a split $b_{2m} = b_{2m}^+ + b_{2m}^-$.

\begin{cor}\label{cor:conf_formal_4m_obstruction}
	Let $M$ be a closed, connected, and oriented smooth $4m$-mani\-fold. Suppose that $M$ is conformally formal. Then the two parts $b_{2m}^+, b_{2m}^-$ of the middle Betti number of $M$ satisfy
	\[
		b_{2m}^\pm \leq \frac{1}{2}\binom{4m}{2m}.
	\]
\end{cor}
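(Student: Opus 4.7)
The plan is to invoke Theorem \ref{thm:conf_formal_algebra_embedding} to transfer the intersection form of $M$ into a subspace of $\wedge^{2m} \R^{4m}$ that is closed under the Euclidean Hodge star, and then to exploit the fact that $\hodge^2 = (-1)^{2m \cdot 2m} = 1$ on $\wedge^{2m} \R^{4m}$, so that $\hodge$ restricts to a genuine involution in this middle degree. First I would fix the graded algebra embedding $\Phi \colon H^*(M;\R) \hookrightarrow \wedge^* \R^{4m}$ provided by Theorem \ref{thm:conf_formal_algebra_embedding}, whose image is closed under $\hodge$, and set $V = \Phi(H^{2m}(M;\R))$. Since $\Phi$ sends cup products to wedge products, and since $\Phi$ maps the one-dimensional space $H^{4m}(M;\R)$ isomorphically onto $\R \cdot \lambda e_{12\ldots 4m}$ for some nonzero scalar $\lambda$, the intersection form $I$ corresponds under $\Phi$, up to multiplication by $\lambda$, to the symmetric bilinear form $\tilde\sigma \colon V \times V \to \R$ determined by $\tilde\sigma(v,w)\, e_{12\ldots 4m} = v \wedge w$. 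Applying the identity $v \wedge w = \ip{v}{\hodge w}\, e_{12\ldots 4m}$ on $\wedge^{2m}\R^{4m}$ (which follows from the definition of $\hodge$ together with $\hodge^2 = 1$ in this degree), I obtain $\tilde\sigma(v,w) = \ip{v}{\hodge w}$.

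Next I would decompose $\wedge^{2m} \R^{4m} = \wedge^+ \oplus \wedge^-$ into the $(\pm 1)$-eigenspaces of $\hodge$, which have equal dimension $\tfrac{1}{2}\binom{4m}{2m}$. Since $V$ is closed under $\hodge$, it inherits the splitting $V = V^+ \oplus V^-$ with $V^\pm = V \cap \wedge^\pm$. For $v \in V^+ \setminus \{0\}$ I have $\tilde\sigma(v,v) = \ip{v}{v} > 0$, for $v \in V^- \setminus \{0\}$ I have $\tilde\sigma(v,v) = -\ip{v}{v} < 0$, and $V^+$ and $V^-$ are $\tilde\sigma$-orthogonal because they are $\ip{\cdot}{\cdot}$-orthogonal as distinct eigenspaces of the self-adjoint operator $\hodge$. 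Thus $\tilde\sigma$ has signature $(\dim V^+, \dim V^-)$ on $V$.

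Finally, I would transport this back through $\Phi$, accounting for the scalar $\lambda$: the signature of $I$ on $H^{2m}(M;\R)$ equals $(\dim V^+, \dim V^-)$ if $\lambda > 0$ and $(\dim V^-, \dim V^+)$ if $\lambda < 0$. In either case $\{b_{2m}^+, b_{2m}^-\} = \{\dim V^+, \dim V^-\}$, and since $V^\pm \subset \wedge^\pm$, both $\dim V^+$ and $\dim V^-$ are bounded by $\tfrac{1}{2}\binom{4m}{2m}$. I do not anticipate a serious obstacle here: the self-dual/anti-self-dual decomposition in middle degree is classical, and the only bookkeeping subtlety is tracking the sign of $\lambda$ when identifying $I$ with $\tilde\sigma$, which, as just noted, does not affect the final inequality.
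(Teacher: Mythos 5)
Your proposal is correct and takes essentially the same approach as the paper: both define the bilinear form $\sigma(v,w)\,e_{12\ldots n} = v \wedge w$ on $\Phi(H^{2m}(M;\R))$, use closure of the image under $\hodge$ to split it into self-dual and anti-self-dual parts, identify the signature of $\sigma$ with $(\dim V^+, \dim V^-)$, and compare with the eigenspace dimensions $\tfrac{1}{2}\binom{4m}{2m}$. The only cosmetic difference is that you invoke the identity $v \wedge w = \ip{v}{\hodge w}\,e_{12\ldots n}$ directly, while the paper computes the same facts via a chosen orthonormal basis.
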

\begin{proof}
	Let $\Phi$ again be the graded embedding of Theorem \ref{thm:conf_formal_algebra_embedding}. We denote $\Phi(H^{2m}(M; \R)) = \Lambda_{2m}$. We have a symmetric bilinear form $\sigma$ on $\Lambda_{2m}$ defined by $v \wedge v' = \sigma(v, v') e_{12\ldots n}$. Since $\Phi \colon H^{2m}(M; \R) \to \Lambda_{2m}$ is a bijective isomorphism of algebras, it maps $I$-orthogonal bases to $\sigma$-orthogonal bases. It follows that the signature of $\sigma$ is $(0, b_{2m}^{+}, b_{2m}^{-})$ or $(0, b_{2m}^{-}, b_{2m}^{+})$, where the order of the signs is determined by whether $\Phi$ maps the orientation class $V$ of $M$ into a positive or negative multiple of $e_{12\dots n}$.
	
	We then note that, since $\Lambda_{2m}$ is closed under the Hodge star $\hodge$, it has a decomposition $\Lambda_{2m} = \Lambda_{2m}^+ \oplus \Lambda_{2m}^-$ into positive and negative eigenspaces of $\hodge$. Moreover, $\Lambda_{2m}^+$ and $\Lambda_{2m}^-$ are orthogonal to each other, due to the eigenspaces of $\hodge$ in $\wedge^{2m} \R^{4m}$ being orthogonal to each other. Hence, we may select an orthonormal basis $\{v_1, \dots, v_l\}$ of $\Lambda_{2m}$ consisting of elements of $\Lambda_{2m}^+$ and $\Lambda_{2m}^-$. If $i, j \in \{1, \dots, l\}$ and $i \neq j$, then $v_i \wedge v_j = v_i \wedge (\pm \hodge v_j) = \pm \ip{v_i}{ v_j} e_{12\ldots n} = 0$, and therefore $\sigma(v_i, v_j) = 0$. Moreover, $\sigma(v_i, v_i) = 1$ if $v_i \in \Lambda_{2m}^+$, and $\sigma(v_i, v_i) = -1$ if $v_i \in \Lambda_{2m}^-$. Consequently, the signature of $\sigma$ is $(0, \dim \Lambda_{2m}^+, \dim \Lambda_{2m}^-)$.
	
	We conclude that $b_{2m}^{+}, b_{2m}^{-} \in \{\dim \Lambda_{2m}^+, \dim \Lambda_{2m}^-\}$. Since the positive and negative eigenspaces of $\hodge$ in $\wedge^{2m} \R^{4m}$ both are of dimension $\binom{4m}{2m}/2$, we obtain that $\dim \Lambda_{2m}^\pm \leq \binom{4m}{2m}/2$. The claim follows.
\end{proof}

We note that the implications of Theorem \ref{thm:conf_formal_algebra_embedding} are not limited to the above corollaries. This is demonstrated by e.g.\ the resulting obstruction to the conformal formality of $\connsum^{15} (\S^2 \times \S^4)$ discussed in the introduction.

\subsection{The Clifford case}

We then discuss the case of manifolds which are conformally formal in the Clifford sense, in which case Theorem \ref{thm:conf_formal_clifford_embedding} yields extra rigidity for the embedding $\Phi$. We limit the scope of our discussion to explaining the obstruction for 4-manifolds discussed in Theorem \ref{thm:4_manifold_uqr_obstruction}.

\begin{lemma}\label{lem:clifford_4_manifold_lemma}
	Let $M$ be a closed, connected, and oriented smooth $4$-mani\-fold. Suppose that $M$ is conformally formal in the Clifford sense. Then the two parts $b_{2}^+, b_{2}^-$ of the middle Betti number of $M$ satisfy
	\[
		b_{2}^\pm \in \{0, 1, 3\}.
	\]
\end{lemma}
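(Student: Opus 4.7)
The plan is to apply Theorem \ref{thm:conf_formal_clifford_embedding} to produce a graded algebra embedding $\Phi \colon H^*(M; \R) \to \wedge^* \R^4$ whose image $\Lambda$ is simultaneously closed under the wedge product, the Hodge star, and the Euclidean Clifford product. Writing $\Lambda_2 = \Phi(H^2(M; \R))$ and decomposing it as $\Lambda_2 = \Lambda_2^+ \oplus \Lambda_2^-$ into the $\pm 1$-eigenspaces of $\hodge$, the proof of Corollary \ref{cor:conf_formal_4m_obstruction} already yields $\{b_2^+, b_2^-\} = \{\dim \Lambda_2^+, \dim \Lambda_2^-\}$ together with the bound $\dim \Lambda_2^\pm \leq 3$. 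It therefore suffices to rule out the value $\dim \Lambda_2^\pm = 2$.

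The substance of the argument is a direct computation in $\wedge^* \R^4$ with the Clifford product. Using the orthonormal basis
\[
	\omega_1 = \tfrac{1}{\sqrt{2}}(e_{12} + e_{34}),\quad
	\omega_2 = \tfrac{1}{\sqrt{2}}(e_{13} - e_{24}),\quad
	\omega_3 = \tfrac{1}{\sqrt{2}}(e_{14} + e_{23})
\]
of the self-dual subspace of $\wedge^2 \R^4$, the Clifford identities $e_i \cdot e_i = 1$ and $e_i \cdot e_j = -e_j \cdot e_i$ give by direct expansion
\[
	\omega_i \cdot \omega_i = -1 + e_{1234}
	\quad\text{and}\quad
	\omega_i \cdot \omega_j = -\sqrt{2}\,\omega_k
\]
whenever $(i, j, k)$ is a cyclic permutation of $(1,2,3)$. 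Since the volume element $e_{1234}$ commutes with all $2$-forms under the Clifford product and acts on $\wedge^2 \R^4$ as $-\hodge$, a brief eigenspace argument then shows that for arbitrary self-dual $v, w$ one has $v \cdot w = -\langle v, w\rangle (1 - e_{1234}) + \eta_{v, w}$ with $\eta_{v, w} \in \wedge^2 \R^4$ self-dual; for orthonormal self-dual $v, w$ this reduces to $v \cdot w = \eta_{v, w}$, which by bilinearity and the basis identities is a nonzero self-dual $2$-form orthogonal to both $v$ and $w$.

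Now suppose towards contradiction that $\dim \Lambda_2^+ = 2$, and pick an orthonormal basis $v, w$ of $\Lambda_2^+$. Because $\Lambda$ is $\Z$-graded as a subspace of $\wedge^* \R^4$ and closed under $\cdot$, the pure $2$-form $v \cdot w = \eta_{v, w}$ lies in $\Lambda_2$; being self-dual, it lies in $\Lambda_2^+$; yet the preceding paragraph shows it is orthogonal to both $v$ and $w$, hence to all of $\Lambda_2^+$, forcing $\eta_{v, w} = 0$ and contradicting the computation. Hence $\dim \Lambda_2^+ \in \{0, 1, 3\}$. The identical argument applied to the anti-self-dual orthonormal basis $\tfrac{1}{\sqrt{2}}(e_{12} - e_{34})$, $\tfrac{1}{\sqrt{2}}(e_{13} + e_{24})$, $\tfrac{1}{\sqrt{2}}(e_{14} - e_{23})$ gives $\dim \Lambda_2^- \in \{0, 1, 3\}$, and the claim follows. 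The main technical step, and the principal obstacle to a clean write-up, is verifying the Clifford product identities $\omega_i \cdot \omega_j = -\sqrt{2}\,\omega_k$ and the self-duality of the $2$-form part of an arbitrary Clifford product of two self-dual forms; once these are in place, the dimension contradiction is immediate.
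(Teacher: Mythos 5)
Your proof is correct and takes essentially the same route as the paper. Both proofs reduce to showing $\dim\Lambda_2^\pm \neq 2$ by observing that the degree-$2$ part of the Euclidean Clifford product, restricted to the self-dual (resp.\ anti-self-dual) subspace of $\wedge^2\R^4$, is a cross-product-like operation. The paper packages this via a map $P(v,v')=\dfpart{v\cdot v'}_{2m}$ in Lemma~\ref{lem:betti_result_product} (valid in all dimensions $4m$, showing $P$ preserves the $\pm$-eigenspaces of $\hodge$), then exhibits the explicit cross-product multiplication table on the self-dual basis $f_1,f_2,f_3$ and quotes that no $2$-dimensional subspace of $\R^3$ is closed under the cross product. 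You use the orthonormal normalization $1/\sqrt{2}$ rather than the paper's $1/2$ (accounting for the factor $-\sqrt{2}$ in place of the paper's $\pm 1$ in the multiplication table), and you spell out the ``no $2$-dimensional subspace is closed'' step by directly producing an element of $\Lambda_2^+$ orthogonal to all of $\Lambda_2^+$. Your identities $\omega_i\cdot\omega_i=-1+e_{1234}$ and $\omega_i\cdot\omega_j=-\sqrt{2}\,\omega_k$ (cyclic) check out, and the gradedness of $\im\Phi$ correctly lets you drop to the degree-$2$ component; the argument is sound.
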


The result follows essentially from the structure of the Clifford product on the positive and negative eigenspaces of $\hodge$. We split the proof in two parts, stating first the part which is not specific to dimension 4.

\begin{lemma}\label{lem:betti_result_product}
	Let $M$ be a closed, connected, and oriented smooth $4m$-mani\-fold, and let $\Phi \colon H^*(M; \R) \to \wedge^* \R^{4m}$ be a graded embedding of algebras such that $\Phi(H^*(M; \R))$ is closed under the Euclidean Clifford product $\cdot$ of $\wedge^* \R^{4m}$. Let $\Lambda_{2m} = \Phi(H^{2m}(M; \R))$, and let $\Lambda_{2m} = \Lambda_{2m}^+ \oplus \Lambda_{2m}^-$ be the decomposition of $\Lambda_{2m}$ into positive and negative eigenspaces of $\hodge$. Moreover, let $P \colon (\wedge^{2m} \R^{4m}) \times (\wedge^{2m} \R^{4m}) \to \wedge^{2m} \R^{4m}$ be the map defined by
	\[
		P(v, v') = \dfpart{v \cdot v'}_{2m}.
	\]
	Then $P(\Lambda_{2m}^+ \times \Lambda_{2m}^+) \subset \Lambda_{2m}^+$ and $P(\Lambda_{2m}^- \times \Lambda_{2m}^-) \subset \Lambda_{2m}^-$.
\end{lemma}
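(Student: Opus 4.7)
The plan is to exploit the relationship between the Hodge star on $\wedge^{2m}\R^{4m}$ and right Clifford multiplication by the volume element $\omega_0 = e_1 \cdot e_2 \cdots e_{4m}$. The key structural observation is that, in the middle degree of a $4m$-manifold, the Hodge star is realized (up to a universal sign) by a Clifford operation, which then interacts cleanly with associativity of the Clifford product.

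First I would establish two purely algebraic identities in the Euclidean Clifford algebra of $\R^{4m}$. The first is a \emph{Hodge--Clifford identity in the middle degree}: there exists a sign $\eta_m = (-1)^{m(2m-1)} \in \{+1, -1\}$, independent of $v$, such that
\[
    v \cdot \omega_0 = \eta_m \hodge v
\]
for every $v \in \wedge^{2m}\R^{4m}$. I would verify this on a basis element $e_I$ with $I = \{i_1 < \dots < i_{2m}\}$ by first writing $\omega_0 = \sigma_I \, e_I \cdot e_{I^c}$, where $\sigma_I \in \{\pm 1\}$ is the sign of the associated shuffle permutation; this same sign appears in $\hodge e_I = \sigma_I e_{I^c}$. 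The Clifford identity $(e_I)^2 = (-1)^{2m(2m-1)/2} = \eta_m$ then yields $e_I \cdot \omega_0 = \sigma_I (e_I)^2 e_{I^c} = \eta_m \sigma_I e_{I^c} = \eta_m \hodge e_I$, and by linearity the identity extends to all of $\wedge^{2m}\R^{4m}$. The second identity I need is that right Clifford multiplication by $\omega_0$ carries $\wedge^k\R^{4m}$ into $\wedge^{4m-k}\R^{4m}$, which is immediate on basis elements.

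Together, the first identity says that $v \in \Lambda_{2m}^\pm$ if and only if $v \cdot \omega_0 = \pm \eta_m v$. I then fix a sign $\delta \in \{+, -\}$ and elements $v, v' \in \Lambda_{2m}^\delta$, and apply associativity:
\[
    (v \cdot v') \cdot \omega_0 = v \cdot (v' \cdot \omega_0) = \delta \eta_m \, v \cdot v'.
\]
Extracting the $\wedge^{2m}$-component on each side gives the key equality. The right-hand side contributes $\delta \eta_m P(v, v')$. On the left, the second identity implies that only the $\wedge^{2m}$-piece of $v \cdot v'$ can contribute to the middle-degree component of $(v \cdot v') \cdot \omega_0$, and that contribution equals $P(v, v') \cdot \omega_0 = \eta_m \hodge P(v, v')$ by the first identity. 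Equating produces $\hodge P(v, v') = \delta P(v, v')$, placing $P(v, v')$ in the $\delta$-eigenspace of $\hodge$ on $\wedge^{2m}\R^{4m}$.

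The remaining step is to show $P(v, v') \in \Lambda_{2m}$ itself, for which both hypotheses on $\Phi$ are used. Closure of $\Phi(H^*(M; \R))$ under the Clifford product places $v \cdot v'$ in the image of $\Phi$; since $\Phi$ is a \emph{graded} embedding, its image decomposes as $\bigoplus_k \Phi(H^k(M; \R))$, so each homogeneous piece of $v \cdot v'$ lies in the matching graded component. In particular, $P(v, v') \in \Phi(H^{2m}(M; \R)) = \Lambda_{2m}$, and combined with the eigenvalue equation this gives $P(v, v') \in \Lambda_{2m}^\delta$, as desired. The main obstacle I anticipate is the sign bookkeeping in the first identity: one must verify carefully that the same sign $\eta_m$ appears for every basis element, so that $v \cdot \omega_0 = \eta_m \hodge v$ is a linear identity rather than a basis-dependent one. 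Once that identity is secured, the remainder is a short associativity-and-grading argument.
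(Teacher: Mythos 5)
Your proof is correct and follows essentially the same approach as the paper's: realize the middle-degree Hodge star as Clifford multiplication by the volume element, then use associativity of the Clifford product and the degree-flipping property of multiplication by $\omega_0$ to transfer the eigenvalue to $P(v,v')$, with the grading of $\Phi$'s image supplying $P(v,v') \in \Lambda_{2m}$. The only cosmetic difference is that you multiply by $\omega_0$ on the right (so the eigenvalue passes through $v'$), while the paper multiplies by $(-1)^m e_{12\ldots n}$ on the left (passing through $v$); the sign $\eta_m = (-1)^{m(2m-1)} = (-1)^m$ agrees.
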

\begin{proof}
	Let $n = 4m$. We first point out that $\Lambda_{2m}$ is indeed closed under $\hodge$, since the Hodge star on $\wedge^{2m} \R^{4m}$ is given by left Clifford multiplication with $(-1)^m e_{12\ldots n}$, and since $e_{12\ldots n} \in \Phi(H^*(M; \R))$. Hence, the decomposition $\Lambda_{2m} = \Lambda_{2m}^+ \oplus \Lambda_{2m}^-$ is indeed valid.
	
	Let $v, v' \in \wedge^{2m} \R^{4m}$. We then have $\hodge \dfpart{v \cdot v'}_{2m} = \dfpart{\hodge (v \cdot v')}_{2m}$. Hence, we obtain by the associativity of the Clifford product that
	\begin{align*}
	\hodge P(v, v') &= \dfpart{(-1)^m e_{12\ldots n} \cdot (v \cdot v')}_{2m}\\
	&\qquad= \dfpart{((-1)^m e_{12\ldots n} \cdot v) \cdot v'}_{2m}
	= P(\hodge v, v').
	\end{align*} 
	By the bilinearity of $P$, it follows that $P$ maps the positive and negative eigenspaces of $\hodge$ to itself. Moreover, since $\Phi(H^*(M; \R))$ is closed under the Clifford product, we have $P(\Lambda_{2m} \times \Lambda_{2m}) \subset \Lambda_{2m}$. The claim follows.
\end{proof}

Lemma \ref{lem:clifford_4_manifold_lemma} now follows from an analysis of the map $P$ in dimension 4.

\begin{proof}[Proof of Lemma \ref{lem:clifford_4_manifold_lemma}]
	By Corollary \ref{cor:conf_formal_4m_obstruction}, we already have $b_2^\pm \leq 3$. It therefore remains to show that $b_2^\pm \neq 2$. Let $\Phi$ be the embedding provided by Theorem \ref{thm:conf_formal_clifford_embedding}, let $\Lambda_2 = \Phi(H^2(M; \R))$, and let $\Lambda_{2} = \Lambda_{2}^+ \oplus \Lambda_{2}^-$ be the split of $\Lambda_{2}$ into positive and negative eigenspaces under $\hodge$.
	
	Consider the map $P \colon (\wedge^2 \R^4) \times (\wedge^2 \R^4) \to \wedge^2 \R^4$ defined in Lemma \ref{lem:betti_result_product}. The positive eigenspace of $\hodge$ in $\wedge^2 \R^4$ has the basis
	\begin{align*}
		f_1 &= \frac{e_{12} + e_{34}}{2},&
		f_2 &= \frac{e_{14} + e_{23}}{2},&
		f_3 &= \frac{e_{13} + e_{42}}{2}.
	\end{align*}
	Using \eqref{eq:clifford_basis_power} and \eqref{eq:clifford_basis_anticommute}, we obtain the following multiplication table for this basis under $P$:
	\begin{align*}
		P(f_f, f_1) &= 0,&
		P(f_1, f_2) &= f_3,&
		P(f_1, f_3) &= -f_2,\\
		P(f_2, f_1) &= -f_3,&
		P(f_2, f_2) &= 0,&
		P(f_2, f_3) &= f_1,\\
		P(f_3, f_1) &= f_2,&
		P(f_3, f_2) &= -f_1,&
		P(f_3, f_3) &= 0.
	\end{align*}
	Hence, $P$ acts on the positive eigenspace of $\hodge$ isomorphically to the cross product in $\R^3$. Since $\Lambda_{2}^+$ is closed under $P$ by Lemma \ref{lem:betti_result_product}, and since no two-dimensional vector subspace of $\R^3$ is closed under the cross product, we therefore conclude that $\dim \Lambda_{2}^+ \neq 2$.
	
	The same proof holds for $\Lambda_{2}^-$. Indeed, the negative eigenspace of $\hodge$ has the basis
	\begin{align*}
		f_1' &= \frac{e_{12} - e_{34}}{2},&
		f_2' &= \frac{e_{13} - e_{42}}{2},&
		f_3' &= \frac{e_{14} - e_{23}}{2},
	\end{align*}
	which has the same multiplication table under $P$. Hence, we conclude that $\dim \Lambda_{2}^\pm \neq 2$. Since $b_2^\pm \in \{\dim \Lambda_{2}^+, \dim \Lambda_{2}^-\}$ as discussed in the proof of Corollary \ref{cor:conf_formal_4m_obstruction}, the claim follows.
\end{proof}

\section{Quasiregular maps}\label{sect:qr_and_structs}

In this section, we briefly go over the essentials of quasiregular maps in preparation for the proofs of Theorems \ref{thm:uqr_elliptic_is_formal} and \ref{thm:uqr_elliptic_is_clifford} in the next section. We focus on how quasiregular maps interact with conformal structures and conformal cohomology theories. For further details on quasiregular maps, we refer the interested reader to e.g.\ the books of Rickman \cite{Rickman_book} and Iwaniec and Martin \cite{Iwaniec-Martin_book}.

Suppose that $M$ and $N$ are closed, connected, oriented Riemannian $n$-manifolds. As stated in the introduction, quasiregular maps are continuous maps in the Sobolev space $W^{1,n}_\loc(M, N)$ which satisfy for some $K \geq 1$ the condition
\begin{equation}\label{eq:qr_condition}
	\abs{Df(x)}^n \leq K J_f(x)
\end{equation}
for almost every $x \in M$. There are several equivalent ways of formally defining the space $W^{1,n}_\loc(M, N)$. Since $f$ is continuous by assumption, a relatively simple approach is to not define the full space $W^{1,n}_\loc(M, N)$, but instead the subspace of continuous functions of $W^{1,n}_\loc(M, N)$, which can be reduced to the Euclidean counterpart using smooth bilipschitz charts.

\subsection{Quasiregular maps and conformal structures} If $f \colon M \to N$ is a non-constant quasiregular map, and  $[g]$ is a bounded conformal structure on $N$, then there exists a \emph{pullback structure} $f^*[g]$, which is a bounded conformal structure on $M$. In particular, $f^*[g] = [f^* g]$, where $f^* g$ is the pullback Riemannian metric defined by
\[
	\ip{v}{w}_{f^* g} = \ip{Df(x) v}{Df(x) w}_{g}
\]
for $v, w \in T_x M$ and a.e.\ $x \in M$. Note that in obtaining a well defined measurable pullback metric $f^* g$, we use two classical properties of quasiregular maps. First, we use the fact that $Df(x)$ is invertible for a.e.\ $x \in M$ in order to see that $f^*g$ gives a positive norm to every nonzero tangent vector at almost every point of $M$. Second, the measurability of the metric $f^* g$ is based on the fact that $f$ satisfies the Lusin $(N^{-1})$-condition, and therefore preserves measurable sets under preimages.

Since we have $f^* (\rho^2 g) = (\rho^2 \circ f) f^* g$ for measurable functions $\rho \colon M \to (0, \infty)$, we obtain a well-defined map of conformal structures. The boundedness of $f^* [g]$ moreover follows from the boundedness of $[g]$ and the quasiregularity of $f$. Indeed, condition \eqref{eq:qr_condition} implies that, for all $v, w \in T_x M \setminus \{0\}$ and a.e.\ $x \in M$, we have $\abs{Df(x)v}_{g_N}/\abs{Df(x) w}_{g_N} \leq K^n \abs{v}_{g_M}/\abs{w}_{g_M}$ in the chosen smooth Riemannian metrics $g_M$ and $g_N$ of $M$ and $N$.

For a.e. $x \in M$, the above formula for the pullback metric takes the form
\[
	\smallip{\alpha \circ (\wedge^k Df(x))}{\beta \circ (\wedge^k Df(x))}_{f^* g} = \ip{\alpha}{\beta}_g
\]
for $k$-covectors $\alpha, \beta \in T_{f(x)}^* N$. In particular, we obtain for measurable differential $k$-forms $\alpha, \beta$ on $N$ the formula
\[
	\ip{f^* \alpha}{f^* \beta}_{f^* g} = \ip{\alpha}{\beta}_g \circ f,
\]
which holds a.e. on $M$. Volume forms follow the formula
\[
	f^* \vol_g = \vol_{f^* g}.
\]
Similarly, the Hodge star of the pullback metric satisfies
\[
	f^* \circ \hodge_g = \hodge_{f^* g} \circ f^*
\]
a.e.\ on $M$.

We then consider the interaction of quasiregular maps with the Clifford product.

\begin{lemma}\label{lem:QR_clifford}
	Let $f \colon M \to N$ be a non-constant non-injective quasiregular map between closed, connected, oriented Riemannian $n$-manifolds, let $[g]$ be a bounded conformal structure on $N$, and let $\K \in \{\R, \C\}$. Suppose that $\alpha \in \Gamma(\wedge^l N; \K)$ and $\beta \in \Gamma(\wedge^m N; \K)$, where $l, m \in \{0, \dots, n\}$. Then
	\[
		(f^* \alpha) \cdot_{f^* g} (f^* \beta) = f^*(\alpha \cdot_g \beta)
	\]
	almost everywhere on $M$.
\end{lemma}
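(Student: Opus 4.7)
The plan is to reduce the identity to a pointwise statement at almost every point of $M$, and then recognize it as a general linear-algebraic fact about Clifford algebras and isometries.

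First, I would note that since $f$ is a non-constant quasiregular map, $Df(x)$ is invertible for almost every $x \in M$. Fix such a point $x$. The pullback on forms is by definition the exterior power $f^*_x = \wedge^*(Df(x)^*)$, applied componentwise after tensoring with $\K$. By the defining formula of the pullback metric, the linear map $Df(x)^* \colon T^*_{f(x)} N \to T^*_x M$ is an isometry between the inner product spaces $(T^*_{f(x)} N, \ip{\cdot}{\cdot}_g)$ and $(T^*_x M, \ip{\cdot}{\cdot}_{f^*g})$. Hence the identity to be proven is the statement that a certain linear isometry induces a Clifford algebra homomorphism, applied to the covectors $\alpha_{f(x)}$ and $\beta_{f(x)}$.

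Next, I would appeal to the universal property of Clifford algebras. A linear isometry $T \colon (V, b_V) \to (W, b_W)$ between inner product spaces extends uniquely to a unital algebra homomorphism $\Cl(V, b_V) \to \Cl(W, b_W)$, because $(Tv) \cdot (Tv) = b_W(Tv, Tv) = b_V(v,v) = v \cdot v$ respects the defining relation. When $T$ is a bijection, the extension is an algebra isomorphism, and under the canonical identification $\Cl(V, b_V) \cong \wedge^* V$, this extension coincides with the exterior power $\wedge^* T$. Applying this to $T = Df(x)^*$ yields
\[
(f^*_x \alpha_{f(x)}) \cdot_{f^*g} (f^*_x \beta_{f(x)}) = f^*_x(\alpha_{f(x)} \cdot_g \beta_{f(x)})
\]
at a.e.\ $x \in M$, which is the desired identity almost everywhere.

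For the complex coefficient case $\K = \C$, I would additionally check that the conjugation map is preserved by $f^*_x$. This is immediate, since the conjugation on $T^*_{f(x)} N \otimes \C$ is determined by the real subspace $T^*_{f(x)} N$, the real-linear map $Df(x)^*$ sends $T^*_{f(x)} N$ into $T^*_x M$, and $f^*_x$ is defined as the $\C$-linear extension of $\wedge^*(Df(x)^*)$. Therefore $f^*_x$ intertwines the conjugation maps on the source and target, and so the Clifford algebra isomorphism argument of the previous paragraph applies verbatim in the complex case via the bilinear form $(v,w) \mapsto \ip{v}{\overline{w}}$.

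I do not expect a real obstacle here; the proof is essentially a consequence of the universal property of Clifford algebras combined with the isometric nature of $Df(x)^*$ in the pullback metric. The only point requiring care is remaining conscious of the distinction between the pointwise pullback $f^*_x$ and the global operation $f^*$, and ensuring that the identity holds on the full-measure subset of $M$ where $Df(x)$ is invertible and where $\alpha$, $\beta$ are defined and satisfy the usual pullback identities.
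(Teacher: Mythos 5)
Your proof is correct, and it reaches the same conclusion via a slightly more abstract route than the paper. Both arguments pivot on the same key observation: for a.e.\ $x \in M$, the pullback $Df(x)^*$ (equivalently $f^*$ on covectors) is an isometry from $(T^*_{f(x)}N \otimes \K, \ip{\cdot}{\cdot}_g)$ onto $(T^*_x M \otimes \K, \ip{\cdot}{\cdot}_{f^*g})$ which also respects the real/self-conjugate subspaces, and hence conjugation, in the case $\K = \C$. From there the paper proceeds computationally: it pulls back an $\ip{\cdot}{\cdot}_g$-orthonormal self-conjugate basis $\{\eps_i\}$ to an $\ip{\cdot}{\cdot}_{f^*g}$-orthonormal self-conjugate basis $\{f^*\eps_i\}$, and then verifies the Clifford product identity explicitly on induced basis elements using the anticommutation and squaring rules before invoking bilinearity. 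You instead invoke the universal property: an isometry extends to a unital algebra homomorphism of Clifford algebras, and under the canonical identifications $\Cl(V, b) \cong \wedge^* V$ this homomorphism is the exterior power $\wedge^* T$. That last identification is exactly the content of the paper's basis computation, so the two routes are essentially the same fact packaged differently; your version is cleaner to state but quietly relies on the reader knowing (or re-deriving via a basis) that the Clifford functor applied to an isometry agrees with the exterior functor under the canonical vector space isomorphism. Your handling of the conjugation map in the complex case is correct and matches the paper's convention of using self-conjugate orthonormal bases.
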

\begin{proof}
	At almost every point $x \in M$, if $\{\eps_1, \dots, \eps_n\}$ is an $\ip{\cdot}{\cdot}_g$-orthonormal basis of self-conjugate elements of $T^*_{f(x)}N \otimes \K$, then $\{f^*\eps_1, \dots, f^*\eps_n\}$ is an $\ip{\cdot}{\cdot}_{f^* g}$-orthonormal basis of self-conjugate elements of $T^*_{x}M \otimes \K$. Suppose that $B = \{\eps_1, \dots, \eps_n\}$ is such a basis at $f(x) \in N$. 
	
	Consider two induced basis elements $\eps_I = \eps_{i_1} \cdot_g \dots \cdot_g \eps_{i_l} = \eps_{i_1} \wedge \dots \wedge \eps_{i_l}$ and $\eps_{I'} = \eps_{i_1'} \wedge \dots \wedge \eps_{i_m'}$. By \eqref{eq:clifford_basis_power} and \eqref{eq:clifford_basis_anticommute}, their product is of the form $\eps_I \cdot_g \eps_{I'} = (-1)^{c(I, I')} \eps_{j_1} \wedge \dots \wedge \eps_{j_k}$. Now we may compute, using again \eqref{eq:clifford_basis_power} and \eqref{eq:clifford_basis_anticommute}, that
	\begin{align*}
		(f^*\eps_I) \cdot_{f^*g} (f^*\eps_{I'})
		&= (f^*(\eps_{i_1} \wedge \dots \wedge \eps_{i_l})) \cdot_{f^* g}
			(f^*(\eps_{i_1'} \wedge \dots \wedge \eps_{i_m'}))\\
		&= ((f^*\eps_{i_1}) \wedge \dots \wedge (f^*\eps_{i_l})) \cdot_{f^* g}
			((f^*\eps_{i_1'}) \wedge \dots \wedge (f^*\eps_{i_m'}))\\
		&= (-1)^{c(I, I')} (f^*\eps_{j_1}) \wedge \dots \wedge (f^*\eps_{j_k})\\
		&= f^*(\eps_I \cdot_g \eps_J).
	\end{align*}
	Hence, by writing $\alpha_{f(x)}$ and $\beta_{f(x)}$ with the basis induced by $B$, the claim follows from the bilinearity of the Clifford product, linearity of $f^*$, and the interaction of $f^*$ with the scalar multiplication.
\end{proof}

Combining Lemma \ref{lem:QR_clifford} with the formula for $\ip{f^* \alpha}{f^* \beta}_{f^* g}$ immediately yields the following corollary.

\begin{cor}\label{cor:QR_clifford_scaled}
	Let $f \colon M \to N$ be a non-constant non-injective quasiregular map between closed, connected, oriented Riemannian $n$-manifolds, let $[g]$ be a bounded conformal structure on $N$, and let $\K \in \{\R, \C\}$. Suppose that $\alpha \in \Gamma(\wedge^l N; \K)$ and $\beta \in \Gamma(\wedge^m N; \K)$, where $l, m \in \{0, \dots, n\}$. Then
	\[
	(f^* \alpha) \odot_{f^* g} (f^* \beta) = f^*(\alpha \odot_g \beta)
	\]
	almost everywhere on $M$.
\end{cor}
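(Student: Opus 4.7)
The plan is to verify the identity componentwise in the bidegree $(l,m)$, reducing everything to Lemma \ref{lem:QR_clifford} plus the pullback formula $\abs{f^*\omega}_{f^*g} = \abs{\omega}_g \circ f$ for measurable forms.

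First I would dispose of the trivial case $l = m = 0$. Here $\odot_g$ is defined as the ordinary product of $\K$-valued functions, and pullback by $f$ is composition, so $(f^*\alpha)(f^*\beta) = (\alpha \circ f)(\beta \circ f) = (\alpha\beta)\circ f = f^*(\alpha\beta)$ almost everywhere. Next, since $\odot_g$ extends by summing over bihomogeneous components and both sides of the claim are linear in the bidegree decomposition of $\alpha$ and $\beta$, it suffices to treat the case where $\alpha$ is an $l$-form and $\beta$ an $m$-form with $(l,m) \neq (0,0)$.

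In this case I would start from the definition
\[
	\alpha \odot_g \beta = 1 + \sum_{k=1}^n \frac{\dfpart{\alpha \cdot_g \beta}_k}{\abs{\dfpart{\alpha \cdot_g \beta}_k}_g^{\frac{l+m-k}{l+m}}},
\]
and pull back termwise under $f^*$. For each $k$, Lemma \ref{lem:QR_clifford} yields $(f^*\alpha)\cdot_{f^*g}(f^*\beta) = f^*(\alpha \cdot_g \beta)$ almost everywhere; since $f^*$ preserves the grading on differential forms, this gives $\dfpart{(f^*\alpha)\cdot_{f^*g}(f^*\beta)}_k = f^*\dfpart{\alpha \cdot_g \beta}_k$ almost everywhere. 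From the inner product identity $\ip{f^*\omega_1}{f^*\omega_2}_{f^*g} = \ip{\omega_1}{\omega_2}_g \circ f$, one obtains
\[
	\abs{\dfpart{(f^*\alpha)\cdot_{f^*g}(f^*\beta)}_k}_{f^*g} = \abs{\dfpart{\alpha \cdot_g \beta}_k}_g \circ f
\]
almost everywhere. Raising this to the power $(l+m-k)/(l+m)$ and combining with the pullback of the numerator, the $k$-th summand on the left becomes
\[
	\frac{f^*\dfpart{\alpha\cdot_g\beta}_k}{\bigl(\abs{\dfpart{\alpha\cdot_g\beta}_k}_g\circ f\bigr)^{\frac{l+m-k}{l+m}}} = f^*\!\left(\frac{\dfpart{\alpha\cdot_g\beta}_k}{\abs{\dfpart{\alpha\cdot_g\beta}_k}_g^{\frac{l+m-k}{l+m}}}\right),
\]
using that pullback commutes with multiplication by scalar functions via $f^*(h\omega) = (h\circ f)\, f^*\omega$. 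Summing over $k$ and noting that $f^*$ fixes the constant $1$ gives the desired identity.

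The only subtlety I anticipate is the $0/0$ convention at points where $\dfpart{\alpha\cdot_g\beta}_k$ vanishes. The definition stipulates the $k$-th summand to be $0$ at such points. On $M$, by the identity $\dfpart{(f^*\alpha)\cdot_{f^*g}(f^*\beta)}_k = f^*\dfpart{\alpha\cdot_g\beta}_k$ (valid a.e.), the zero locus of the left numerator coincides a.e.\ with the preimage under $f$ of the zero locus of the right numerator, so both sides are assigned the value $0$ on the same measurable set up to null sets. Thus the convention is respected after pullback, and the equality holds almost everywhere on $M$.
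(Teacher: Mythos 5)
Your proof is correct and follows essentially the same route as the paper: the paper deduces the corollary as an immediate consequence of Lemma \ref{lem:QR_clifford} together with the pullback identity $\ip{f^*\alpha}{f^*\beta}_{f^*g} = \ip{\alpha}{\beta}_g \circ f$, and you simply spell out this combination term by term in the definition of $\odot_g$, including the correct handling of the $0/0$ convention.
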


\subsection{Quasiregular maps and conformal cohomology}

A major motivation for the conformal cohomology theories of Section \ref{sect:conf_cohomology} is that the pullback map $f^*$ of differential forms for a quasiregular $f \colon M \to N$ naturally induces a pullback map in cohomology. In particular, we will use the following result.

\begin{lemma}\label{lem:qr_pullback_in_cohom}
	Let $M, N$ be closed, connected, oriented Riemannian $n$-mani\-folds, let $f^* \colon M \to N$ be a non-constant quasiregular map, and let $\K \in \{\R, \C\}$. Then we obtain a chain map
	\[
		f^* \colon \cesobs(\wedge^* N; \K) \to \cesobs(\wedge^* M; \K).
	\]
	In particular, $f$ naturally induces a map
	\[
		f^* \colon \cehoms{*}(N; \K) \to \cehoms{*}(M; \K).
	\]
\end{lemma}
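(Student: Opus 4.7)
The plan is to verify two things: first, that $f^*$ maps the chain complex $\cesobs(\wedge^* N; \K)$ into $\cesobs(\wedge^* M; \K)$; second, that $f^*$ commutes with the weak differential. Once both are in place, the induced map on cohomology is immediate from standard homological algebra.

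For the first step, the key pointwise estimate is $\abs{f^*\omega}_{g_M} \leq C \abs{Df}^k (\abs{\omega}_{g_N} \circ f)$ for a measurable $k$-form $\omega$ on $N$, together with the identity $\abs{f^*\omega}_{g_M} = J_f \cdot (\abs{\omega}_{g_N} \circ f)$ in the top-degree case. Combined with the quasiregularity bound $\abs{Df}^n \leq K J_f$, a change of variables relying on the bounded multiplicity of $f$ (which holds since $M$ and $N$ are closed) shows that $f^*$ maps $L^{n/k}(\wedge^k N; \K)$ boundedly into $L^{n/k}(\wedge^k M; \K)$. To upgrade this to the sharp spaces, I would invoke Gehring's higher integrability, which provides $\abs{Df} \in L^{n+\delta}$ for some $\delta = \delta(n, K) > 0$. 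Starting from $\omega \in L^{n/k+\eps_0}$, I would split $\abs{Df}^{pk} = \abs{Df}^n \cdot \abs{Df}^{pk-n}$ for $p = n/k + \eps_1$ slightly larger than $n/k$, dominate the first factor by $KJ_f$, and apply H\"older's inequality with exponents $r, s$ calibrated so that the first factor absorbs into a change-of-variables integral of $\abs{\omega}^{pr}$ with $pr \leq n/k + \eps_0$, while the second factor remains controlled by $\abs{Df} \in L^{n+\delta}$. A direct computation shows that any sufficiently small $\eps_1$ works (essentially $\eps_1 \lesssim \delta\eps_0/n$). The cases $k = 0$ and $k = n$ proceed by analogous estimates, additionally using that $\omega \circ f$ is continuous when $\omega$ is.

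For the second step, on smooth $\omega$ the identity $d(f^*\omega) = f^*(d\omega)$ in the weak sense follows from the classical chain rule, valid for pullbacks under Sobolev maps in $W^{1,n}_\loc$. For a general $\omega \in \cesobs(\wedge^k N; \K)$, I would mollify $\omega$ to obtain smooth $\omega_j$ converging to $\omega$ in $L^{n/k}(\wedge^k N; \K)$ with $d\omega_j \to d\omega$ in $L^{n/(k+1)}(\wedge^{k+1} N; \K)$. The boundedness from the first step yields $f^*\omega_j \to f^*\omega$ in $L^{n/k}$ and $f^*(d\omega_j) \to f^*(d\omega)$ in $L^{n/(k+1)}$. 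Testing the identity $d(f^*\omega_j) = f^*(d\omega_j)$ against an arbitrary smooth $(n-k-1)$-form and passing to the limit identifies $f^*(d\omega)$ as the weak differential of $f^*\omega$, making $f^*$ a chain map and inducing the desired map on $\cehoms{*}$.

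The main technical obstacle is the sharp integrability requirement: the $\sharp$-spaces demand that $f^*$ gain a small amount of integrability, not merely preserve the $L^p$-class. This forces the use of Gehring's higher integrability for $Df$, and the delicate point is balancing the H\"older exponents so that the slack from the higher integrability of $\omega$ and the higher integrability of $Df$ combine to produce an actual gain on the output.
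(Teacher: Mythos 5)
Your proposal is correct and the core argument is the same as the paper's: the pointwise pullback estimate, higher integrability of the Jacobian (you invoke Gehring's result for $\smallabs{Df}$, the paper cites $J_f \in L^r$ from Elcrat--Meyers/Martio, which are equivalent via quasiregularity), and a H\"older split so that a change of variables closes the loop, yielding $f^*L^{n/k,\sharp}(\wedge^k N;\K) \subset L^{n/k,\sharp}(\wedge^k M;\K)$; your exponent calibration is only sketched, while the paper exhibits the concrete choice $q = \bigl(r/(r+\tfrac{k}{n}p-1)\bigr)p$ and checks $n/k < q < p$. The one structural difference is that you re-derive the chain-map identity $d \circ f^* = f^* \circ d$ from scratch by mollifying $\omega$ and passing to the limit, whereas the paper simply observes that this identity is already established for the larger complex $\cesob(\wedge^* N;\K) \to \cesob(\wedge^* M;\K)$ in Kangasniemi--Pankka (Lemma 3.4) and thereby reduces the entire lemma to the inclusion $f^*\cesobs \subset \cesobs$. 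Your mollification step does work --- one mollifies locally via a partition of unity and uses that $f^*$ is linear and $L^p$-bounded to pass to the limit against smooth test forms --- so no gap results; it just makes the proof more self-contained at the cost of re-proving something the paper is entitled to cite.
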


While we are not aware of a full proof of the above in an existing reference, the essential ideas already exist in the known literature. Indeed, Donaldson and Sullivan \cite{Donaldson-Sullivan_Acta} show this for $n = 4$ and $f$ bijective, and the above case is not essentially different. A version for $\cehom{*}(M; \R)$ is also shown by the author and Pankka in \cite[Lemma 3.4]{Kangasniemi-Pankka_PLMS}. See also the related discussion by Gol'dshtein and Troyanov in \cite[Theorem 6.6]{Goldshtein-Troyanov_DeRham}.

However, for the convenience of the reader, we collect a detailed proof here.

\begin{proof}[Proof of Lemma \ref{lem:qr_pullback_in_cohom}]
	The map
	\[
		f^* \colon \cesob(\wedge^* N; \K) \to \cesob(\wedge^* M; \K).
	\]
	is a chain map; see \cite[Lemma 3.4]{Kangasniemi-Pankka_PLMS} for the case $\K = \R$, where the case $\K = \C$ is an immediate consequence. Since $\cesobs(\wedge^* N; \K) \subset \cesob(\wedge^* N; \K)$, the only remaining part is then to show that $f^*\cesobs(\wedge^* N; \K) \subset \cesobs(\wedge^* M; \K)$. Since $f$ is continuous, clearly $f^* C(\wedge^0 N; \K) \subset C(\wedge^0 M; \K)$. The remaining goal is therefore to show that
	\begin{equation}
		f^* L^{\frac{n}{k}, \sharp}(\wedge^k N; \K) \subset L^{\frac{n}{k}, \sharp}(\wedge^k M; \K)
	\end{equation}
	for $k \in \{1, \dots, n\}$.
	
	The method of the proof is as in \cite[Lemma 3.1 and Corollary 3.2]{Kangasniemi-Pankka_PLMS}, where the case $k = n$ is shown. Let $g_M$ and $g_N$ be the Riemannian metrics of $M$ and $N$, respectively. Recall that there exists $r > 1$ for which $J_f \in L^r(M; \R)$; see Elcrat and Meyers \cite{Meyers-Elcrat_HigherInt} or Martio \cite{Martio_HigherInt}. Suppose that $\omega \in L^p(\wedge^k M; \K)$, where $p > n/k$. 
	
	We let
	\[
		q = \left(\frac{r}{r + \frac{k}{n}p - 1}\right) p.
	\]
	Since $p > n/k$, we get that $q < p$, and therefore $p/q > 1$. Moreover, we have
	\[
		\frac{kq}{n} = \frac{krp}{n(r-1) + kp} = \frac{r}{1 + \frac{n}{kp}(r-1)}
		> \frac{r}{1 + (r - 1)} = 1.
	\]
	Hence, $q > n/k$.
	
	By \cite[(2.4)]{Kangasniemi-Pankka_PLMS}, we have
	\[
		\abs{f^* \omega}_{g_M} \leq C (\abs{\omega}_{g_N} \circ f) J_f^\frac{k}{n}
	\]
	a.e.\ on $M$. Hence, we estimate
	\begin{align*}
		&\int_M \abs{f^*\omega}_{g_M}^q \vol_{g_M}
		\leq C^q \int_M (\abs{\omega}_{g_N}^q \circ f) J_f^\frac{kq}{n} \vol_{g_M}\\
		&\hspace{2cm}\leq C^q \int_M (\abs{\omega}_{g_N}^q \circ f) J_f^\frac{q}{p}
			J_f^{q(\frac{k}{n} - \frac{1}{p})} \vol_{g_M}\\
		&\hspace{2cm}\leq C^q \left( \int_M (\abs{\omega}_{g_N}^p \circ f) 
			J_f \vol_{g_M}\right)^\frac{q}{p} \left(
			\int_M J_f^{\frac{pq}{p-q}(\frac{k}{n} - \frac{1}{p})}
			\vol_{g_M}\right)^\frac{p-q}{p}\\
		&\hspace{4cm}= C^q (\deg f)^\frac{q}{p} \norm{\omega}_{p, g_N}^q \left(
			\int_M J_f^{\frac{pq}{p-q}(\frac{k}{n} - \frac{1}{p})}
			\vol_{g_M}\right)^\frac{p-q}{p}.
	\end{align*}
	The claim then follows, since a simple calculation using the definition of $q$ yields
	\[
		\frac{pq}{p-q}\left( \frac{k}{n} - \frac{1}{p} \right)
		= \frac{q}{p-q}\left( \frac{k}{n}p - 1 \right)
		= \frac{q}{p-q}\left( \frac{rp}{q} - r \right)
		= r.
	\]
\end{proof}

We then note that, if $[g]$ is a bounded conformal structure on $N$ and $f \colon M \to N$ is non-constant quasiregular, then the cohomology representation of $\cehoms{*}(N; \K)$ by $\pharm{g}{*}(N; \K)$ is in fact mapped by $f^*$ into the cohomology representation of $\cehoms{*}(M; \K)$ by $\pharm{f^*g}{*}(M; \K)$.

\begin{lemma}\label{lem:p_harmonic_pullback}
	Let $M, N$ be closed, connected, oriented Riemannian $n$-mani\-folds, let $f \colon M \to N$ be a non-constant quasiregular map, let $[g]$ be a bounded conformal structure on $N$, let $k \in \{0, \dots, n-1\}$, and let $\K \in \{\R, \C\}$. Then 
	\[
		f^* \pharm{g}{k}(N; \K) \subset \pharm{f^*g}{k}(M; \K).
	\]
\end{lemma}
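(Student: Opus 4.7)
The plan is to verify the three defining properties of $\pharm{f^*g}{k}(M; \K)$ for $f^*\omega$ where $\omega \in \pharm{g}{k}(N; \K)$: namely, the weak equations \eqref{eq:diffeq_d} and \eqref{eq:diffeq_dstar} with respect to $f^*g$, as well as integrability $f^*\omega \in L^{n/k}(\wedge^k M; \K)$. The case $k=0$ is immediate since $\pharm{g}{0}(N; \K)$ consists of constants and pullbacks of constants are constants. For $k \in \{1, \dots, n-1\}$ the argument is essentially a bookkeeping exercise combining the pullback formulas for $\abs{\cdot}_g$ and $\hodge_g$ recalled earlier in this section with the chain-map property of $f^*$ from Lemma \ref{lem:qr_pullback_in_cohom}.

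First I would record the integrability: by Lemma \ref{lem:p_harm_higher_int} we have $\omega \in L^{n/k, \sharp}(\wedge^k N; \K) \subset \cesobs(\wedge^k N; \K)$, and by Lemma \ref{lem:qr_pullback_in_cohom} it follows that $f^*\omega \in L^{n/k, \sharp}(\wedge^k M; \K)$, which is more than enough. The equation $d(f^*\omega) = 0$ is then precisely the statement that $f^*$ is a chain map applied to the closed form $\omega$.

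The main step is to verify \eqref{eq:diffeq_dstar} for $f^*\omega$. Using the a.e.\ identities $\abs{f^*\omega}_{f^*g} = \abs{\omega}_g \circ f$ and $\hodge_{f^*g} \circ f^* = f^* \circ \hodge_g$, together with the fact that $f^*$ commutes with multiplication by pulled-back scalar functions, I get
\[
    \abs{f^*\omega}_{f^*g}^{\frac{n}{k}-2}\hodge_{f^*g} f^*\omega
    = f^*\!\left(\abs{\omega}_g^{\frac{n}{k}-2}\hodge_g \omega\right).
\]
By Lemma \ref{lem:conformal_hodge}\,\eqref{enum:pharm_prop_confhodge}, the form $\eta := \abs{\omega}_g^{(n/k)-2} \hodge_g \omega$ lies in $\pharm{g}{n-k}(N; \K)$, and in particular satisfies $d\eta = 0$ and (via Lemma \ref{lem:p_harm_higher_int}) belongs to $\cesobs(\wedge^{n-k} N; \K)$. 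The chain-map property of Lemma \ref{lem:qr_pullback_in_cohom} then gives $d(f^*\eta) = f^*(d\eta) = 0$ weakly, which is exactly \eqref{eq:diffeq_dstar} for $f^*\omega$ with respect to $f^*g$.

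The only mild subtlety I anticipate is making sure that $\eta$ is regular enough to apply the chain-map statement of Lemma \ref{lem:qr_pullback_in_cohom}, since \eqref{eq:diffeq_dstar} for $\omega$ is a priori only a weak equation; this is precisely why invoking $\pharm{g}{n-k}(N; \K) \subset \cesobs(\wedge^{n-k} N; \K)$ via Lemmas \ref{lem:conformal_hodge} and \ref{lem:p_harm_higher_int} is the essential reduction. Everything else reduces to the standard transformation rules for the pullback of the inner product and Hodge star under quasiregular maps, all of which are stated a.e.\ on $M$ and require nothing beyond the Lusin $(N^{-1})$-condition and the almost-everywhere invertibility of $Df$.
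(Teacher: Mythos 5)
Your proposal is correct and takes essentially the same approach as the paper: handle $k=0$ trivially, use the chain-map property of $f^*$ for integrability and for $d(f^*\omega)=0$, and for the nonlinear equation pull out the scalar weight to write $\abs{f^*\omega}_{f^*g}^{n/k-2}\hodge_{f^*g}f^*\omega = f^*(\abs{\omega}_g^{n/k-2}\hodge_g\omega)$ and apply the chain-map identity once more. Your explicit invocation of Lemma \ref{lem:conformal_hodge}\eqref{enum:pharm_prop_confhodge} and Lemma \ref{lem:p_harm_higher_int} to justify that the conformal conjugate lies in $\cesobs(\wedge^{n-k} N; \K)$ is a sensible extra care the paper leaves implicit.
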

\begin{proof}
	The case $k = 0$ is trivial, as $\pharm{g}{k}(N; \K)$ and $\pharm{f^*g}{k}(M; \K)$ consist of constant functions. Let then $\omega \in \pharm{g}{k}(N; \K)$ for $k \in \{1, \dots, n-1\}$. By Lemmas \ref{lem:cohom_repr} and \ref{lem:qr_pullback_in_cohom}, we obtain that $f^* \omega \in L^{n/k}(\wedge^k M; \K)$ and $d f^*\omega = f^* d\omega = 0$. For the remaining condition, we compute that
	\begin{multline*}
		d \left(\abs{f^* \omega}_{f^* g}^{\frac{n}{k} - 2} \hodge_{f^* g} f^*\omega \right)
		=d \left( \left(\abs{\omega}_{g}^{\frac{n}{k} - 2} \circ f \right)
			f^* \hodge_{g} \omega \right)\\
		= d f^* \left( \abs{\omega}_{g}^{\frac{n}{k} - 2} 
			\hodge_{g} \omega\right)
		= f^* d \left( \abs{\omega}_{g}^{\frac{n}{k} - 2} 
			\hodge_{g} \omega\right)
		= 0.
	\end{multline*}
\end{proof}

Finally, we end this section with the observation that the conformal Hodge decomposition defined in Section \ref{sect:Hodge_theory} is preserved under quasiregular maps. More precisely, we have the following.

\begin{lemma}\label{lem:conf_hodge_decomposition_qr}
	Let $M, N$ be closed, connected, oriented Riemannian $n$-mani\-folds, let $f \colon M \to N$ be a non-constant quasiregular map, and let $[g]$ be a bounded conformal structure on $N$. Let $\omega \in L^{n/k}(\wedge^k M; \K)$, where $k \in \{1, \dots, n-1\}$ and $\K \in \{\R, \C\}$. Let $(d\alpha, d\beta, \gamma)$ be the conformal Hodge decomposition of $\omega$ with respect to $[g]$. Then $(f^* d\alpha, f^* d\beta, f^* \gamma)$ is the conformal Hodge decomposition of $f^*\omega$ with respect to $[f^* g]$.
\end{lemma}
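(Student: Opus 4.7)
The plan is to apply $f^*$ termwise to the given decomposition of $\omega$, verify that the resulting triple satisfies the structural conditions of Proposition \ref{prop:nonlinear_Hodge_decomposition} for $f^*\omega$ with respect to $[f^*g]$, and then invoke the uniqueness clause of that proposition.

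First I would collect the integrability facts. Conformal invariance of the $L^{n/k}$-norm in the conformal exponent, together with the degree identity $\int_M (\abs{\eta}_g^{n/k} \circ f)\,\vol_{f^*g} = (\deg f) \int_N \abs{\eta}_g^{n/k}\,\vol_g$, yields $\norm{f^*\eta}_{n/k, f^*g} = (\deg f)^{k/n} \norm{\eta}_{n/k, g}$ for any $k$-form $\eta$ on $N$, and analogously at exponent $n/(n-k)$. Combined with the boundedness of $[f^*g]$, which through \eqref{eq:norm_comparison} makes these norms comparable with ones taken against a smooth metric on $M$, this places $f^*d\alpha \in L^{n/k}(\wedge^k M; \K)$ and $f^*d\beta \in L^{n/(n-k)}(\wedge^{n-k} M; \K)$. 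Lemma \ref{lem:p_harmonic_pullback} moreover gives $f^*\gamma \in \pharm{f^*g}{k}(M; \K)$.

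Next I would check the algebraic identities. Pullback commutes with the weak differential, so $f^*d\alpha = d(f^*\alpha)$ and $f^*d\beta = d(f^*\beta)$; in particular the first term is exact and the middle term has the required shape once we show that the Hodge-star piece pulls back correctly. For this, the identities $\abs{f^*\eta}_{f^*g} = \abs{\eta}_g \circ f$ (which follows from $\ip{f^*\eta}{f^*\eta}_{f^*g} = \ip{\eta}{\eta}_g \circ f$) and $\hodge_{f^*g} \circ f^* = f^* \circ \hodge_g$, both recorded in Section \ref{sect:qr_and_structs}, yield
\[
\abs{f^*d\beta}_{f^*g}^{\frac{n}{n-k}-2} \hodge_{f^*g}(f^*d\beta) = \bigl(\abs{d\beta}_g^{\frac{n}{n-k}-2} \circ f\bigr)\,f^*(\hodge_g d\beta) = f^*\!\left(\abs{d\beta}_g^{\frac{n}{n-k}-2} \hodge_g d\beta\right).
\]

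Applying $f^*$ termwise to the identity $\omega = d\alpha + \abs{d\beta}_g^{\frac{n}{n-k}-2} \hodge_g d\beta + \gamma$ then produces precisely the decomposition of $f^*\omega$ claimed. Since all three pieces satisfy the conditions of Proposition \ref{prop:nonlinear_Hodge_decomposition} on $M$ with respect to $[f^*g]$, the uniqueness part of that proposition identifies $(f^*d\alpha, f^*d\beta, f^*\gamma)$ as the conformal Hodge decomposition of $f^*\omega$. I do not anticipate a substantive obstacle: the argument is essentially a naturality verification for the operations $d$, $\hodge$, and $\abs{\cdot}$ under quasiregular pullback. The only technical nuance is that the pointwise identities hold merely almost everywhere, but this is ensured by the Lusin $(N^{-1})$-property of quasiregular maps, which is built into the definition of $f^*g$.
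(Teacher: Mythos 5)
Your overall strategy --- apply $f^*$ termwise to the decomposition and invoke the uniqueness clause of Proposition \ref{prop:nonlinear_Hodge_decomposition} --- is exactly the paper's, and your pointwise Hodge-star computation and the use of Lemma \ref{lem:p_harmonic_pullback} for the harmonic piece both match. Where the argument is thin is the assertion that $f^*d\alpha = d(f^*\alpha)$ (and likewise for $\beta$). Since $f$ is only a Sobolev map, commutation of $f^*$ with the weak differential is a genuine theorem, not a formal naturality statement, and it requires $\alpha$ and $\beta$ to lie in a space where that theorem applies. The paper first upgrades $\alpha,\beta$ to $\cesob(\wedge^* N; \K)$ via the Sobolev--Poincar\'e inequality (Sobolev embedding when the degree drops to $0$), and only then invokes \cite[Lemma 3.4]{Kangasniemi-Pankka_PLMS} for the chain-map property of $f^*$ on $\cesob$. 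Your degree identity gives the $L^p$-bound on $f^*d\alpha$ and $f^*d\beta$ more directly --- a perfectly good shortcut --- but by itself it does not show that $f^*d\alpha$ is a weak differential of a form in $L^{n/k}(\wedge^{k-1}M;\K)$, which is what Theorem \ref{thm:ISS_higherint} demands for the uniqueness argument to identify the triple as the conformal Hodge decomposition. Also, your closing sentence mis-attributes the technical subtlety to the Lusin $(N^{-1})$-condition; that property ensures measurability of the pullback, not that $f^*$ is a chain map.
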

\begin{proof}
	By the uniqueness of conformal Hodge decompositions provided by Proposition \ref{prop:nonlinear_Hodge_decomposition}, it suffices to show that $(f^* d\alpha, f^* d\beta, f^* \gamma)$ satisfies the conditions of a conformal Hodge decomposition of $f^* \omega$ with respect to $[f^*g]$.
	
	By using the Sobolev-Poincar\'e inequality, we may assume that $\alpha, \beta \in \cesob(\wedge^* M)$. In particular, we therefore have by \cite[Lemma 3.4]{Kangasniemi-Pankka_PLMS} that $df^*\alpha = f^*d\alpha \in L^{n/k}(\wedge^k M; \K)$ and $df^*\beta = f^*d\beta \in L^{n/(n-k)}(\wedge^{n-k} M; \K)$. Moreover, by Lemma \ref{lem:p_harmonic_pullback}, we have $f^* \gamma \in \pharm{f^* g}{k}(M; \K)$.
	
	Hence, the forms $f^* d\alpha$, $f^* d\beta$ and $f^* \gamma$ are in the correct spaces, and it remains to check that
	\[
		f^* \omega = f^* d\alpha + \abs{f^* d\beta}_{f^* g}^{\frac{n}{n-k} - 2} \hodge_{f^*g} f^* d\beta + f^* \gamma.
	\]
	This follows immediately from the computation
	\begin{multline*}
		\abs{f^*d\beta}_{f^*g}^{\frac{n}{n-k} - 2} 
			\hodge_{f^* g} f^*d\beta
		= \left( \abs{d\beta}_{g}^{\frac{n}{n-k} - 2} \circ f \right)
			f^*(\hodge_g d\beta)\\
		= f^* \left( \abs{d\beta}_{g}^{\frac{n}{n-k} - 2} 
			\hodge_g d\beta \right).
	\end{multline*}
\end{proof}

\section{Proof of Theorems \ref{thm:uqr_elliptic_is_formal} and \ref{thm:uqr_elliptic_is_clifford}}\label{sect:formality_of_uqr_ell}

Let now $M$ be a closed, connected, oriented Riemannian $n$-manifold, and let $f \colon M \to M$ be a non-constant, non-injective uniformly quasiregular self-map on $M$. We denote by $g_0$ the smooth Riemannian metric of $M$.

In this case, the map $f$ has an invariant conformal structure $[g_f]$. In particular, $[g_f]$ is a bounded conformal structure on $M$ for which $f^*[g_f] = [g_f]$. This was shown by Iwaniec and Martin in \cite{Iwaniec-Martin_AASF}, generalizing a previous proof by Tukia \cite{Tukia_QCGroups} for quasiconformal groups. The proof was given for $M = \S^n$, but the method works for all closed, connected and oriented $M$. We also assume that $g_f \in [g_f]$ is the element of the structure for which $\vol_{g_f} = \vol_{g_0}$. In this case, we have $f^* g_f = J_f^{2/n} g_f$.

In this section, we show that the structure $[g_f]$ is conformally formal in the Clifford sense. Hence, uniformly quasiregularly elliptic manifolds are conformally formal in the Clifford sense.

\subsection{Eigenvector-based spaces.}

Let $k \in \{1, \dots, n-1\}$. Since $f^*[g_f] = [g_f]$, we therefore obtain by Lemma \ref{lem:p_harmonic_pullback} and the conformal invariance of the space $\pharm{g_f}{k}(M; \K)$ a self-map $f^* \colon \pharm{g_f}{k}(M; \K) \to \pharm{g_f}{k}(M; \K)$. We use the abbreviation $\pharm{f}{k}(M; \K) = \pharm{g_f}{k}(M; \K)$.

For every $c \in \cehoms{k}(M; \K)$, we use $\omega_c$ to denote the unique element of $\pharm{f}{k}(M; \K) \cap c$ provided by Lemma \ref{lem:cohom_repr}. As discussed in \cite[Section 4.2]{Kangasniemi_CohomBound}, we have $f^*\omega_c \in \cehoms{k}(M; \K) \cap f^*c$, and therefore by the uniqueness of the representatives we have $f^* \omega_c = \omega_{f^* c}$. Moreover, since $\pharm{f}{k}(M; \K)$ is preserved under scalar multiplication, we also have $\omega_{\lambda c} = \lambda \omega_c$ for every $\lambda \in \K$. In particular, if $c \in \cehoms{k}(M; \K)$ is an eigenvector of $f^*$ satisfying $f^* c = \lambda c$, then also $f^* \omega_c = \lambda \omega_c$.

Our strategy in proving that $\pharm{f}{k}(M; \K)$ is linear is to consider a different space $\cE^k_f(M; \C)$, which is constructed from linear combinations of eigenvectors $\omega_c$ of $f^*$. Here, we require complex coefficients, as the definition of $\cE^k_f(M; \C)$ requires a basis of eigenvectors of $f^*$. The space $\cE^k_f(M; \C)$ shares properties with $\pharm{f}{k}(M; \C)$ such as unique representation of cohomology and being mapped to itself by $f^*$. However, the space $\cE^k_f(M; \C)$ is linear by definition. The crux of the proof is then that we show that $\pharm{f}{k}(M; \C) = \cE^k_f(M; \C)$ by a suitable weak convergence argument.

We now give a detailed definition of the space $\cE^k_f(M; \C)$.

\begin{defn}
	Let $f \colon M \to M$ be a non-constant non-injective uniformly quasiregular map on a closed, connected, oriented Riemannian $n$-manifold $M$, where $n \geq 2$. Moreover, let $k \in \{1, \dots, n-1\}$. 
		
	Fix a basis $\cB = \{c_1, \dots, c_l\}$ of $\cehoms{k}(M; \C)$ consisting of eigenvector classes of $f^* \colon \cehoms{k}(M; \C) \to \cehoms{k}(M; \C)$, with corresponding eigenvalues $\lambda_1, \dots, \lambda_l$. We then denote $\omega_j = \omega_{c_j}$ for every $j \in \{1, \dots, l\}$. Now, the space $\cE^k_f(M; \C)$ of differential $k$-forms is defined by
	\[
		\cE^k_f(M; \C) = \Span (\omega_{1}, \dots, \omega_{l}).
	\]
\end{defn}

Note that the spaces $\cE^k_f(M; \C)$ are a-priori dependent on the selection of basis $\cB$, although later on in the proof it turns out that this is not the case. We now record the basic properties of the spaces $\cE^k_f(M; \C)$.

\begin{lemma}\label{lem:eigenvector_space_properties}
	Let $f \colon M \to M$ be a non-constant non-injective uniformly quasiregular map on a closed, connected, oriented Riemannian $n$-manifold $M$, where $n \geq 2$. Let $k \in \{1, \dots, n-1\}$. Then for every $c \in \cehoms{k}(M; \C)$, there exists a unique $\eta_c \in c \cap \cE^*_f(M; \C)$. The elements $\eta_c$ satisfy
	\begin{gather}
		\label{eq:prop_mult} z\eta_c = \eta_{zc},\\
		\label{eq:prop_add} \eta_c + \eta_{c'} = \eta_{c + c'}, \text{ and}\\
		\label{eq:prop_pull} f^* \eta_c = \eta_{f^* c}
	\end{gather}
	for all $c, c' \in \cehoms{k}(M; \C)$, $k \in \{0, \dots, n\}$ and $z \in \C$.
\end{lemma}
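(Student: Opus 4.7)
The plan is to unwind the definitions and observe that everything reduces to linear algebra in the basis $\cB = \{c_1, \dots, c_l\}$ together with the naturality properties of $c \mapsto \omega_c$ recalled just before the definition of $\cE^k_f(M; \C)$. The starting point is the fact that $\omega_j = \omega_{c_j}$ is the unique representative in $\pharm{f}{k}(M; \C) \cap c_j$ provided by Lemma \ref{lem:cohom_repr}; in particular $[\omega_j] = c_j$ in $\cehoms{k}(M; \C)$.

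For existence of $\eta_c$, given $c \in \cehoms{k}(M; \C)$, I would write $c = \sum_{j=1}^l a_j c_j$ with $a_j \in \C$ uniquely determined by the basis property of $\cB$, and then set $\eta_c = \sum_{j=1}^l a_j \omega_j \in \cE^k_f(M; \C)$. Since the cohomology class is linear in the form, $[\eta_c] = \sum_j a_j [\omega_j] = \sum_j a_j c_j = c$, so $\eta_c \in c \cap \cE^k_f(M; \C)$. For uniqueness, suppose $\eta, \eta' \in c \cap \cE^k_f(M; \C)$; writing the difference $\eta - \eta' = \sum_j b_j \omega_j$, the assumption $[\eta - \eta'] = 0$ yields $\sum_j b_j c_j = 0$ in $\cehoms{k}(M; \C)$, and the linear independence of $\cB$ forces $b_j = 0$ for all $j$, hence $\eta = \eta'$.

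The three identities \eqref{eq:prop_mult}, \eqref{eq:prop_add}, \eqref{eq:prop_pull} then follow by plugging the definition of $\eta_c$ into each side. For \eqref{eq:prop_mult} and \eqref{eq:prop_add}, if $c = \sum_j a_j c_j$ and $c' = \sum_j a'_j c_j$, then $zc + c' = \sum_j (z a_j + a'_j) c_j$, so the unique representative in $\cE^k_f(M; \C)$ is $\sum_j (z a_j + a'_j) \omega_j = z \eta_c + \eta_{c'}$. For \eqref{eq:prop_pull} I would use the key observation highlighted in the text preceding the definition: since $c_j$ is an $f^*$-eigenvector with eigenvalue $\lambda_j$, we have $f^* \omega_j = \omega_{f^* c_j} = \omega_{\lambda_j c_j} = \lambda_j \omega_j$. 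Therefore
\[
f^* \eta_c = \sum_j a_j f^* \omega_j = \sum_j a_j \lambda_j \omega_j,
\]
which matches $\eta_{f^* c}$ since $f^* c = \sum_j a_j \lambda_j c_j$.

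There is essentially no serious obstacle here; the only thing to be careful about is invoking the correct uniqueness statement in the right space. Note in particular that I am not claiming uniqueness of $\eta_c$ inside $\pharm{f}{k}(M; \C)$ (which is what Lemma \ref{lem:cohom_repr} would provide); I only need uniqueness inside $\cE^k_f(M; \C)$, which is immediate from the basis property of $\cB$. The distinction between $\cE^k_f(M; \C)$ and $\pharm{f}{k}(M; \C)$ is exactly the point of introducing these spaces separately, and their eventual coincidence is the work of later lemmas rather than of this one.
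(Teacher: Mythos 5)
Your proposal is correct and follows essentially the same route as the paper: expand $c$ in the eigenvector basis $\cB$, define $\eta_c$ by the corresponding linear combination of the $\omega_j$, deduce uniqueness in $\cE^k_f(M;\C)$ from linear independence of $\cB$, and verify the three identities by direct computation using $f^*\omega_j = \lambda_j \omega_j$. Your remark about needing only uniqueness inside $\cE^k_f(M;\C)$ rather than inside $\pharm{f}{k}(M;\C)$ is exactly the point the paper implicitly relies on when it says existence and uniqueness of $\eta_c$ "follows immediately" from diagonalizability and the definition of $\cE^k_f(M;\C)$.
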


\begin{proof}
	Since $f^* \colon \cehoms{k}(M; \C) \to \cehoms{k}(M; \C)$ is diagonalizable by the main result of \cite{Kangasniemi-Pankka_PLMS}, it follows that its eigenspaces form a direct sum decomposition of $\cehoms{k}(M; \C)$. The existence of a unique $\eta_c \in c \cap \cE^k_f(M; \C)$ for every $c \in \cehoms{k}(M; \C)$ follows immediately from this and the definition of $\cE^k_f(M; \C)$.
	
	We then verify the properties \eqref{eq:prop_mult}, \eqref{eq:prop_add}, and \eqref{eq:prop_pull}. Let $c, c' \in \cehoms{k}(M; \C)$ for some $k \in \{0, \dots, n\}$, and write $c = a_1 c_1 + \dots + a_l c_l$ and $c' = a_1' c_1 + \dots a_l' c_l$, where $\{c_i\}$ is the eigenvector basis of $\cehoms{k}(M; \C)$ used in the definition of $\cE_f^k(M; \C)$. Then $zc + c' = (za_1 + a_1') c_1 + \dots + (za_l + a_l') c_l$, and it follows that
	\[
		\eta_{zc + c'} = (za_1 + a_1') \omega_{c_1} + \dots + (za_l + a_l') \omega_{c_l} = z \eta_{c} + \eta_{c'}.
	\]
	Hence, \eqref{eq:prop_mult} and \eqref{eq:prop_add} follow. Similarly, we have $f^*c = f^*(a_1 c_1 + \dots + a_l c_l) = a_1 \lambda_1 c_1 + \dots + a_l \lambda_l c_l$, and therefore
	\[
		\eta_{f^* c} = a_1 \lambda_1 \omega_{c_1} + \dots + a_l \lambda_l \omega_{c_l}
		= a_1 f^*(\omega_{c_1}) + \dots + a_l f^*(\omega_{c_l}) = f^* \eta_c.
	\]
	Hence, \eqref{eq:prop_pull} also follows.
\end{proof}

\subsection{Proof of $\cE^k_f(M; \C) = \pharm{f}{k}(M; \C)$}

We begin by recording a lemma which is used multiple times in the proof of the main results. It is essentially a version of \cite[Corollary 5.2]{Kangasniemi_CohomBound} for more general differential forms, which in turn is based on an argument used by Okuyama and Pankka in \cite[Theorem 5.2]{Okuyama-Pankka_measure}. The proof is practically the same as in \cite{Kangasniemi_CohomBound}, but we provide it for completeness.

\begin{lemma}\label{lem:weak_convergence_to_zero}
	Let $f \colon M \to M$ be a non-constant non-injective uniformly quasiregular map on a closed, connected, oriented Riemannian $n$-manifold $M$, where $n \geq 2$. Let $k \in \{1, \ldots, n\}$, let $\K \in \{\R, \C\}$, let $\lambda \in \K$ be such that $\abs{\lambda} = (\deg f)^{k/n}$, and let $\omega \in \cesobs(\wedge^k M; \K) \cap \ker(d)$ be such that $[\omega] = [0]$ in $\cehoms{k}(M; \K)$. Then
	\[
		\frac{(f^j)^* \omega}{\lambda^j} \to 0,
	\]
	where the convergence is in the weak sense.
\end{lemma}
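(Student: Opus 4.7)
The plan is to exploit that $\omega$ is a weak exact form and shift derivatives onto a smooth test form via Stokes, then use the sharp behaviour of conformal norms under pullback by the invariant metric $g_f$ to compare growth rates.

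Since $[\omega] = [0]$ in $\cehoms{k}(M;\K)$, by definition of the chain complex $\cesobs(\wedge^* M;\K)$ there exists $\tau \in \cesobs(\wedge^{k-1}M;\K)$ with $\omega = d\tau$. By Lemma \ref{lem:qr_pullback_in_cohom}, $f^*$ is a chain map, so $(f^j)^*\omega = d(f^j)^*\tau$. The key norm identity to establish (or recall) is that, for $g_f$ the invariant metric with $\vol_{g_f}=\vol_{g_0}$ and $f^*g_f = J_f^{2/n} g_f$, a direct calculation using the change of variables formula gives
\[
	\norm{(f^j)^*\eta}_{n/l,\, g_f} = (\deg f^j)^{l/n} \norm{\eta}_{n/l,\, g_f}
\]
for every $l \in \{1,\dots,n\}$ and every $\eta \in L^{n/l}(\wedge^l M;\K)$. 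Applied to $\eta = \omega$, this shows that $(f^j)^*\omega/\lambda^j$ is uniformly bounded in $L^{n/k}$, and hence it suffices to verify that the pairing with smooth test forms tends to zero.

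Take $\alpha \in C^\infty(\wedge^{n-k}M;\K)$. Since $M$ is closed, Stokes' theorem yields
\[
	\int_M (f^j)^*\omega \wedge \alpha
	= \int_M d(f^j)^*\tau \wedge \alpha
	= (-1)^k \int_M (f^j)^*\tau \wedge d\alpha.
\]
For $k \geq 2$, an application of Hölder's inequality in the conformal exponents together with the pullback norm identity bounds the right-hand side by a constant multiple of
\[
	(\deg f^j)^{(k-1)/n} \norm{\tau}_{n/(k-1),\, g_f} \norm{d\alpha}_{n/(n-k+1),\, g_f}.
\]
Dividing by $\abs{\lambda^j} = (\deg f^j)^{k/n}$ gives a bound of order $(\deg f^j)^{-1/n}$, which tends to zero since $f$ is non-injective and hence $\deg f \geq 2$. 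For $k=1$, $\tau$ is continuous on the compact $M$, so $\norm{(f^j)^*\tau}_{\infty} = \norm{\tau}_\infty$ is constant and we divide by $\abs{\lambda^j} \to \infty$. For $k=n$, the test object $\alpha$ is a smooth function $h$ and the analogous Stokes identity gives $\int_M h \cdot (f^j)^*\omega = -\int_M dh \wedge (f^j)^*\tau$, which again falls under the same Hölder estimate with $\tau \in L^{n/(n-1)}$.

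The main subtlety I expect is bookkeeping, rather than substance: one needs the pullback norm identity in $g_f$, a careful treatment of the boundary cases $k=1$ and $k=n$ where the exponent $n/(k-1)$ degenerates, and the verification that $\tau \in \cesobs$ provides the integrability needed to apply the weak Stokes identity (for which the ingredients of Lemma \ref{lem:qr_pullback_in_cohom} already suffice). Once these are in place, the argument is a direct translation of \cite[Corollary 5.2]{Kangasniemi_CohomBound} and \cite[Theorem 5.2]{Okuyama-Pankka_measure}, with no new ideas required.
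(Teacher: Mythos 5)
Your proposal is correct and follows essentially the same route as the paper: write $\omega = d\tau$ with $\tau \in \cesobs(\wedge^{k-1}M;\K)$, shift the differential onto a smooth test form, apply H\"older in the conformal exponents, and use the pullback norm growth of order $(\deg f^j)^{(k-1)/n}$ (via the invariant metric $g_f$ in your version, versus the citation of \cite[Lemma 2.4]{Kangasniemi-Pankka_PLMS} for a general smooth background metric in the paper's), with the $k=1$ case handled through the supremum norm. Your extra remark establishing uniform $L^{n/k}$-boundedness of the sequence so that testing against smooth forms suffices for genuine weak convergence is a clean point the paper leaves implicit; otherwise the two proofs coincide.
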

\begin{proof}
	Since $[\omega] = [0]$, there exists $\tau \in \cesobs(\wedge^{k-1} M; \K)$ for which $\omega = d\tau$. Since $\cesobs(\wedge^{k-1} M; \K) \subset L^{n/(k-1)}(\wedge^{k-1} M; \K)$, we therefore have $\tau \in L^{n/(k-1)}(\wedge^{k-1} M; \K)$. Note that for $k = 1$, we interpret $n/(k-1) = \infty$; the above still holds since $\cesobs(\wedge^0 M; \K) \subset C(\wedge^0 M; \K) \subset L^\infty(\wedge^0 M; \K)$.
	
	Now let $\varphi \in C^\infty(\wedge^{n-k} M; \K)$ be a smooth test form. We obtain
	\begin{align*}
		\int_M \varphi \wedge \lambda^{-j} (f^j)^* \omega 
		&= \lambda^{-j}  \int_M \varphi \wedge (f^j)^* d\tau\\
		&= \lambda^{-j}  \int_M d\varphi \wedge (f^j)^* \tau.
	\end{align*}
	By the H\"older-type inequality for wedge products, we have
	\begin{align*}
		\abs{\int_M \varphi \wedge \lambda^{-j} (f^j)^* \omega} 
		&\leq C(n) (\deg f)^{-\frac{jk}{n}} \norm{d\varphi}_{\frac{n}{n-k+1}} 
		\norm{(f^j)^* \tau}_{\frac{n}{k-1}}.
	\end{align*}
	Finally, if $k > 1$, we have by the estimate from \cite[Lemma 2.4]{Kangasniemi-Pankka_PLMS}, that $\smallnorm{(f^j)^* \tau}_{n/(k-1)} \leq C(n, K) (\deg f)^{(k-1)/n} \norm{\tau}_{n/(k-1)}$. If $k = 1$, then this also holds, since $\tau$ is an $L^\infty$-function, and therefore $\smallnorm{(f^j)^*\tau}_\infty = \smallnorm{\tau \circ f^j}_\infty = \norm{\tau}_\infty$. Hence,
	\begin{align*}
		\abs{\int_M \varphi \wedge \lambda^{-j} (f^j)^* \omega} 
		&\leq C(n, K) (\deg f)^{\frac{j(k - 1)}{n}-\frac{jk}{n}} 
		\norm{d\varphi}_{\frac{n}{n-k+1}} \norm{\tau}_{\frac{n}{k-1}}\\
		&=  C(n, K) (\deg f)^{-\frac{j}{n}} 
		\norm{d\varphi}_{\frac{n}{n-k+1}} \norm{\tau}_{\frac{n}{k-1}}.
	\end{align*}
	Since the right hand side converges to zero, the claim follows.
\end{proof}

Now, we are ready to show the linearity of $\pharm{f}{k}(M; \K)$.

\begin{lemma}\label{lem:closed_under_addition}
	Let $f \colon M \to M$ be a non-constant non-injective uniformly quasiregular map on a closed, connected, oriented Riemannian $n$-manifold $M$, where $n \geq 2$, and let $k \in \{1, \dots, n-1\}$. Then 
	\[
		\pharm{f}{k}(M; \C) = \cE^k_f(M; \C).
	\]
	In particular, $\pharm{f}{k}(M; \C)$ is closed under addition, as is consequently also $\pharm{f}{k}(M; \R)$.
\end{lemma}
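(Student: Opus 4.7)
The plan is to establish $\omega_c = \eta_c$ for every $c \in \cehoms{k}(M; \C)$, where $\omega_c$ is the unique $p$-harmonic representative of $c$ from Lemma \ref{lem:cohom_repr} and $\eta_c$ is the unique representative of $c$ in $\cE^k_f(M; \C)$ from Lemma \ref{lem:eigenvector_space_properties}. Since $\pharm{f}{k}(M; \C) = \{\omega_c : c \in \cehoms{k}(M; \C)\}$ and $\cE^k_f(M; \C) = \{\eta_c : c \in \cehoms{k}(M; \C)\}$, this gives the claimed equality; the closure of $\pharm{f}{k}(M; \R)$ under addition then follows since it is the real subspace of $\pharm{f}{k}(M; \C)$.

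Two dynamical facts drive the argument. First, the identity $f^* g_f = J_f^{2/n} g_f$ combined with the conformal invariance of the $L^{n/k}$-norm and the degree formula for $f^*$ on top forms yields $\norm{(f^j)^* \alpha}_{n/k, g_f} = (\deg f)^{jk/n} \norm{\alpha}_{n/k, g_f}$ for every $\alpha \in L^{n/k}(\wedge^k M; \C)$. Second, by the diagonalizability theorem of \cite{Kangasniemi-Pankka_PLMS}, the eigenvalues $\lambda_1, \dots, \lambda_l$ of $f^*$ on $\cehoms{k}(M; \C)$ all satisfy $\smallabs{\lambda_i} = (\deg f)^{k/n}$. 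The sequence $j \mapsto ((\lambda_1 / (\deg f)^{k/n})^j, \dots, (\lambda_l / (\deg f)^{k/n})^j)$ thus lies in the compact torus $(S^1)^l$, and a pigeonhole argument gives a subsequence $j_m \to \infty$ along which $(\lambda_i / (\deg f)^{k/n})^{j_m} \to 1$ for every $i$.

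Fixing $c = \sum_i a_i c_i$, I would then compare three limits along $j_m$, each normalized by $(\deg f)^{j_m k/n}$. By Lemma \ref{lem:p_harmonic_pullback} and the uniqueness clause of Lemma \ref{lem:cohom_repr} we have $(f^{j_m})^* \omega_c = \omega_{(f^*)^{j_m} c}$, while Lemma \ref{lem:eigenvector_space_properties} gives $(f^{j_m})^* \eta_c = \eta_{(f^*)^{j_m} c} = \sum_i a_i \lambda_i^{j_m} \omega_{c_i}$. Dividing the latter by $(\deg f)^{j_m k/n}$ gives $\sum_i a_i (\lambda_i / (\deg f)^{k/n})^{j_m} \omega_{c_i}$, which converges in $L^{n/k}$ to $\eta_c$ by the choice of subsequence. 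On the other hand, by picking smooth closed representatives of cohomology classes that vary linearly in the coefficients, the convergence $(f^*)^{j_m} c / (\deg f)^{j_m k/n} \to c$ at the level of representatives, combined with Theorem \ref{thm:ISS_continuity_complex} and the uniform bound $\smallnorm{(f^{j_m})^* \omega_c / (\deg f)^{j_m k/n}}_{n/k} = \norm{\omega_c}_{n/k}$, yields strong $L^{n/k}$-convergence $\omega_{(f^*)^{j_m} c} / (\deg f)^{j_m k/n} \to \omega_c$. Finally, the exact closed form $\xi = \omega_c - \eta_c$ lies in $\cesobs(\wedge^k M; \C)$ by Lemma \ref{lem:p_harm_higher_int} and represents the trivial cohomology class, so Lemma \ref{lem:weak_convergence_to_zero} with $\lambda = (\deg f)^{k/n}$ forces $(f^{j_m})^* \xi / (\deg f)^{j_m k/n} \to 0$ weakly. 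Matching this weak limit against the difference of the two preceding strong limits yields $\omega_c - \eta_c = 0$.

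The main obstacle is coordinating these three convergences, and in particular verifying the hypotheses of Theorem \ref{thm:ISS_continuity_complex} along the subsequence: this rests on the uniform $L^{n/k}$-bound supplied by the isometric scaling $\norm{(f^j)^* \alpha}_{n/k} = (\deg f)^{jk/n} \norm{\alpha}_{n/k}$, and on the torus recurrence that guarantees the iterated normalized cohomology classes return to $c$ rather than drifting within its orbit.
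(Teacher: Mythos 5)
Your proof is correct, but it takes a genuinely different route from the paper's at the key "extraction of a convergent subsequence" step. You and the paper both aim to show $\omega_c = \eta_c$, both use the isometric scaling $\norm{F^j\alpha}_{n/k,g_f} = \norm{\alpha}_{n/k,g_f}$ where $F = (\deg f)^{-k/n}f^*$, both invoke the continuity estimate of Theorem~\ref{thm:ISS_continuity_complex}, and both close with the weak convergence Lemma~\ref{lem:weak_convergence_to_zero}. The divergence is in how the subsequence is chosen and what is known about the limit. The paper's proof simply uses the finite dimensionality of $\cE^k_f(M;\C)$: $(F^j\eta_c)_j$ is bounded in a finite-dimensional normed space, so a subsequence $F^{j_i}\eta_c \to \eta$ converges for some a priori unknown $\eta$; Theorem~\ref{thm:ISS_continuity_complex} applied to the pairs $(F^{j_{i_1}}\eta_c, F^{j_{i_2}}\eta_c)$ shows $(F^{j_i}\omega_c)_i$ is Cauchy with some limit $\omega$; and then the weak limit $0$ forces $\eta = \omega$, after which the isometry of $F$ gives $\eta_c = \omega_c$. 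You instead identify the limits in advance by a torus-recurrence argument: since $\lambda_i/(\deg f)^{k/n} \in S^1$, pigeonhole on $(S^1)^l$ yields $j_m\to\infty$ with $(\lambda_i/(\deg f)^{k/n})^{j_m}\to 1$ for all $i$, which makes $F^{j_m}\eta_c \to \eta_c$ directly; you then get $F^{j_m}\omega_c \to \omega_c$ by applying Theorem~\ref{thm:ISS_continuity_complex} to a fixed linear family of smooth representatives $\phi(\cdot)$ (together with the uniform bound from the isometry), and conclude by matching against the weak limit $0$. Both approaches are sound. Yours is conceptually cleaner in that the limits are explicitly the objects one wants to compare, at the cost of the extra recurrence argument and the bookkeeping with the auxiliary smooth representatives $\phi$; the paper's is a bit shorter because it only needs precompactness and does not require knowing which cohomology classes the subsequence returns to.
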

\begin{proof}
	We wish to prove that in fact $\eta_c = \omega_c$ for every $c \in \cehoms{k}(M; \C)$. Hence, let $c \in \cehoms{k}(M; \C)$ for $k \in \{1, \dots, n-1\}$. We define a map $F$ of complex measurable differential forms by
	\[
		F \omega = (\deg f)^{-\frac{k}{n}} f^* \omega.
	\]
	Since $f^*[g_f] = [g_f]$, we have 
	\begin{equation}\label{eq:F_preserves_norm}
		\norm{F \omega}_{n/k, g_f} = \norm{\omega}_{n/k, g_f}
	\end{equation}
	for all $\omega \in L^{n/k}(\wedge^k M; \C)$; see \cite[Section 4.2]{Kangasniemi_CohomBound}. By Lemma \ref{lem:p_harmonic_pullback}, $F$ maps $\pharm{f}{k}(M; \C)$ into itself. Moreover, by Lemma \ref{lem:eigenvector_space_properties}, $F$ maps $\cE^k_f(M; \C)$ into itself.
	
	We now consider the sequence
	\begin{equation}\label{eq:pullback_sequence}
		\left( F^j (\eta_c - \omega_c) \right)_{j=1}^\infty.
	\end{equation}
	Since $[\eta_c] = [\omega_c]$, and therefore $[\eta_c - \omega_c] = [0]$ in $\cehoms{k}(M; \C)$, we obtain by Lemma \ref{lem:weak_convergence_to_zero} that \eqref{eq:pullback_sequence} converges to 0 weakly.
	
	The space $\cE^k_f(M; \C)$ equipped with $\norm{\cdot}_{n/k, g_f}$ is a finite dimensional normed space. Hence, it is bilipischitz equivalent to a Euclidean space. By \eqref{eq:F_preserves_norm}, $(F^j \eta_c)_{j=1}^\infty$ is a bounded sequence in $\cE^k_f(M; \C)$. Since bounded subsets of Euclidean spaces are precompact, and since $\norm{\cdot}_{n/k, g_f}$ is equivalent with the standard $L^{n/k}$-norm by \eqref{eq:norm_comparison}, there exists a subsequence $(F^{j_i} \eta_c)_{i = 1}^\infty$  which converges to some measurable form $\eta$ in the space $L^{n/k}(\wedge^k M; \C)$.
	
	We then apply Theorem \ref{thm:ISS_continuity_complex} and \eqref{eq:F_preserves_norm}, and obtain that
	\begin{multline*}
		\norm{F^{j_{i_1}} \omega_c - F^{j_{i_2}} \omega_c}_{\frac{n}{k}, g_f}\\
		\leq C \left( \norm{F^{j_{i_1}} \omega_c}_{\frac{n}{k}, g_f} + 
			\norm{F^{j_{i_2}} \omega_c}_{\frac{n}{k}, g_f} \right)^{1-t}
			\norm{F^{j_{i_1}} \eta_c - F^{j_{i_2}} \eta_c}_{\frac{n}{k}, g_f}^t\\
		= C (2 \norm{\omega_c}_{\frac{n}{k}, g_f})^{1-t} 
			\norm{F^{j_{i_1}} \eta_c - F^{j_{i_2}} \eta_c}_{\frac{n}{k}, g_f}^t.
	\end{multline*}
	Thus, we conclude that the sequence $(F^{j_i} \omega_c)_{i=1}^\infty$ is also Cauchy, and hence also converges to some $\omega$ in the space $L^{n/k}(\wedge^k M; \C)$.
	
	We therefore have $F^j (\eta_c - \omega_c) \to 0$ weakly, and also $F^{j_i} (\eta_c - \omega_c) \to \eta - \omega$ in $L^{n/k}(\wedge^k M; \C)$. Hence, by uniqueness of the limit, we conclude that $F^{j_i} \left(\eta_c - \omega_c\right) \to 0$ in $L^{n/k}(\wedge^k M; \C)$. It follows that $\norm{F^{j_i} (\eta_c - \omega_c)}_{n/k, g_f} \to 0$. But by \eqref{eq:F_preserves_norm}, $\norm{F^{j} (\eta_c - \omega_c)}_{n/k, g_f} = \norm{\eta_c - \omega_c}_{n/k, g_f}$ for every $j$. Hence, we conclude that $\norm{\eta_c - \omega_c}_{n/k, g_f} = 0$, and therefore $\omega_c = \eta_c$. The claim follows.
\end{proof}

\subsection{Algebra structure}

So far we have achieved that $\pharm{f}{k}(M; \K)$ is a vector space for every $k \in \{0, \dots, n-1\}$. It remains therefore to select a suitable $\pharm{f}{n}(M; \K)$, and to show that the resulting $\pharm{f}{*}(M; \K)$ is closed under $\odot_{g_f}$.

Our choice of $\pharm{f}{n}(M; \K)$ is as follows. Let $\mu_f$ denote the $f$-invariant measure of Okuyama and Pankka; see \cite{Okuyama-Pankka_measure}. We select the space $\pharm{f}{n}(M; \K)$ to consist of all $\K$-multiples of $\mu_f$ which are represented by an integrable $n$-form. Since $\mu_f$ is a measure, we have $\pharm{f}{n}(M; \K) \subset \pharmgeq{g_f}{n}(M; \K)$. Moreover, since $f^* \mu_f = (\deg f) \mu_f$ by \cite[Theorem 2]{Okuyama-Pankka_measure}, the space $\pharm{f}{n}(M; \K)$ is also preserved under the pullback $f^*$.

We point out that if $M$ is not a rational cohomology sphere, then $\mu_f$ is represented by an $n$-form by \cite[Theorem 1.2]{Kangasniemi_CohomBound}. Hence, in this case, we have $\dim_\K \pharm{f}{n}(M; \K) = 1$. If $M$ is a rational cohomology sphere, then either $\mu_f$ has an $n$-form representation and $\dim_\K \pharm{f}{n}(M; \K) = 1$, or $\mu_f$ doesn't have one and $\pharm{f}{n}(M; \K) = \{0\}$.

We now show that $\pharm{f}{*}(M; \K)$ is closed under $\odot_{g_f}$, and therefore $[g_f]$ is conformally $\K$-formal in the Clifford sense. The crux of the proof lies in the special case where two eigenvectors of $f^*$ are multiplied by $\odot_{g_f}$. For this, we start with a lemma where we use the conformal Hodge decomposition of Section \ref{sect:Hodge_theory} to show that essentially all eigenvectors of $f^*$ are in $\pharm{f}{*}(M; \K)$.

\begin{lemma}\label{lem:eigenvectors_are_harmonic}
	Let $f \colon M \to M$ be a non-constant non-injective uniformly quasiregular map on a closed, connected, oriented Riemannian $n$-manifold $M$, where $n \geq 2$. Let $k \in \{1, \dots, n\}$, and let $\K \in \{\R, \C\}$. Suppose that $\omega \in L^{n/k, \sharp}(\wedge^k M; \K)$ satisfies $f^* \omega = \lambda \omega$ for some $\lambda \in \K$. Then $\omega \in \pharm{f}{k}(M; \K)$.
\end{lemma}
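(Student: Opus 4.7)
The plan is to split into the cases $k \in \{1, \ldots, n-1\}$ and $k = n$, both driven by applying Lemma \ref{lem:weak_convergence_to_zero} to iterates of $f^*$ acting on appropriately chosen exact eigenforms.

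For $k \in \{1, \ldots, n-1\}$, I would start from the conformal Hodge decomposition of Proposition \ref{prop:nonlinear_Hodge_decomposition},
\[
	\omega = d\alpha + \abs{d\beta}_{g_f}^{\frac{n}{n-k}-2}\hodge_{g_f}d\beta + \gamma,
\]
with $\gamma \in \pharm{f}{k}(M; \K)$ and, since $\omega \in L^{n/k, \sharp}$, with $d\alpha \in L^{n/k, \sharp}$ and $d\beta \in L^{n/(n-k), \sharp}$. By Lemma \ref{lem:conf_hodge_decomposition_qr}, $f^* \omega$ has conformal Hodge decomposition $(f^*d\alpha, f^*d\beta, f^*\gamma)$. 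A short calculation using the conjugate-linearity of $\hodge_{g_f}$ on complex forms shows that $\lambda \omega$ itself admits a conformal Hodge decomposition of the form $(\lambda d\alpha, \mu d\beta, \lambda \gamma)$, where $\mu \in \K$ is the unique scalar satisfying $\abs{\mu}^{n/(n-k)-2}\overline{\mu} = \lambda$; in particular $\abs{\mu} = \abs{\lambda}^{(n-k)/k}$. Uniqueness in Proposition \ref{prop:nonlinear_Hodge_decomposition} then forces the three coordinate-wise eigenvector identities $f^*d\alpha = \lambda d\alpha$, $f^*d\beta = \mu d\beta$, and $f^*\gamma = \lambda \gamma$.

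To close this case, I would show $d\alpha = 0$ and $d\beta = 0$. The conformal invariance $f^*[g_f] = [g_f]$ combined with the pullback degree formula yields $\norm{f^*d\alpha}_{n/k, g_f}^{n/k} = (\deg f)\norm{d\alpha}_{n/k, g_f}^{n/k}$, so the eigenvector identity forces $\abs{\lambda} = (\deg f)^{k/n}$ whenever $d\alpha \neq 0$. Since $d\alpha \in \cesobs(\wedge^k M; \K)$ is closed and exact, Lemma \ref{lem:weak_convergence_to_zero} yields $(f^j)^*d\alpha/\lambda^j \to 0$ weakly; but the eigenvector relation makes this the constant sequence $d\alpha$, producing a contradiction. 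Applying the identical argument to the $(n-k)$-form $d\beta$ with $\abs{\mu} = (\deg f)^{(n-k)/n}$ gives $d\beta = 0$. Hence $\omega = \gamma \in \pharm{f}{k}(M; \K)$.

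For $k = n$, the conformal Hodge decomposition is unavailable, but the situation is more rigid. Assuming $\omega \neq 0$, the conformal $L^1$-norm identity $\norm{f^*\omega}_{1, g_f} = (\deg f)\norm{\omega}_{1, g_f}$ gives $\abs{\lambda} = \deg f$, and passing to cohomology (where $f^*$ acts as $\deg f$ on $H^n$) forces either $[\omega] = 0$ or $\lambda = \deg f$. The first alternative is ruled out by writing $\omega = d\tau$ with $\tau \in \cesobs(\wedge^{n-1} M; \K)$ and applying Lemma \ref{lem:weak_convergence_to_zero}. For $\lambda = \deg f$, I would reduce to the real non-negative case by splitting $\omega$ into real and imaginary parts and then applying the Jordan decomposition: writing $\omega = h\vol_{g_f}$, the pointwise identity $(h \circ f)J_{f, g_f} = (\deg f)h$ together with $J_{f, g_f} \geq 0$ shows that the positive and negative parts of $h$ also satisfy this relation and therefore define non-negative eigenforms. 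Once normalized to a probability measure, such a non-negative eigenform becomes an $f$-invariant probability measure scaling as $(\deg f)$ under $f^*$, and the uniqueness of $\mu_f$ in \cite{Okuyama-Pankka_measure} forces it to coincide with $\mu_f$. Thus $\mu_f$ is represented by an $n$-form and $\omega$ is a $\K$-multiple of that representation, putting $\omega \in \pharm{f}{n}(M; \K)$. The main obstacle throughout is the first case: identifying the correct scalar $\mu$ so that $(\lambda d\alpha, \mu d\beta, \lambda\gamma)$ really is a conformal Hodge decomposition of $\lambda\omega$ when $\K = \C$, since only then does the uniqueness step cleanly give the three separate eigenvalue equations required to invoke Lemma \ref{lem:weak_convergence_to_zero}.
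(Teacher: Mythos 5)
For $k < n$ your argument coincides with the paper's: you take the conformal Hodge decomposition of Proposition \ref{prop:nonlinear_Hodge_decomposition}, use Lemma \ref{lem:conf_hodge_decomposition_qr} to pull it back, recognize that $\lambda\omega$ admits the decomposition $(\lambda d\alpha, \mu d\beta, \lambda\gamma)$ with $\abs{\mu}^{n/(n-k)-2}\overline{\mu} = \lambda$, and then kill $d\alpha$ and $d\beta$ via Lemma \ref{lem:weak_convergence_to_zero}. Your bookkeeping of $\mu$ (equivalently the paper's $\abs{\lambda}^{(n-k)/k-1}\overline{\lambda}$) is correct, and the derivation of $\abs{\lambda} = (\deg f)^{k/n}$, which you phrase conditionally on $d\alpha \neq 0$ while the paper derives it upfront from $\omega \neq 0$, amounts to the same computation.

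For $k = n$ you take a genuinely different route. The paper observes that once $\lambda = \deg f$, the sequence $((\deg f)^{-1}f^*)^j\omega$ is constant equal to $\omega$, and then directly invokes \cite[Theorem 5.1]{Kangasniemi_CohomBound}, which says this sequence converges weakly to a multiple of $\mu_f$; the constant sequence forces $\omega$ itself to be that multiple. You instead split into real/imaginary parts and then into Jordan positive/negative parts, argue that each piece is a nonnegative eigenform with eigenvalue $\deg f$ (using $J_f > 0$ a.e.), normalize, and appeal to the uniqueness of $\mu_f$ in \cite{Okuyama-Pankka_measure} among eigenmeasures not charging the exceptional set. This works, but carries some extra bookkeeping you leave implicit: since $h^+$ and $h^-$ have disjoint supports, at most one can be a nonzero multiple of $\mu_f$, so in fact $h$ has a.e.\ constant sign and you need to run the argument over four Jordan pieces when $\K = \C$. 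The trade-off is that the paper's approach relies on the convergence statement of \cite[Theorem 5.1]{Kangasniemi_CohomBound} (which was engineered precisely for this kind of argument), while yours relies on the uniqueness statement in \cite{Okuyama-Pankka_measure} together with absolute continuity. Both are valid; the paper's is shorter because it bypasses Jordan decomposition entirely.
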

\begin{proof}
	We may suppose $\omega \neq 0$, since $0 \in \pharm{f}{k}(M; \K)$. We therefore have $\abs{\lambda} = (\deg f)^{k/n}$; see e.g.\ the computation in \cite[(6.2)]{Kangasniemi_CohomBound}.
	
	Consider first the case $k < n$. By Proposition \ref{prop:nonlinear_Hodge_decomposition}, $\omega$ has a unique conformal Hodge decomposition $(d\alpha, d\beta, \gamma)$ with respect to $[g_f]$. Moreover, since $\omega \in L^{n/k, \sharp}(\wedge^k M; \K)$, Proposition \ref{prop:nonlinear_Hodge_decomposition} also implies that $d\alpha \in L^{n/k, \sharp}(\wedge^k M; \K)$ and $d\beta \in L^{n/(n-k), \sharp}(\wedge^{n-k} M; \K)$. By the Sobolev-Poincar\'e inequality, or in the case of $1$-forms by the Sobolev embedding theorem, we may therefore assume that $\alpha \in \cesobs(\wedge^{k-1} M; \K)$ and $\beta \in \cesobs(\wedge^{n-k-1} M; \K)$.
	
	By Lemma \ref{lem:conf_hodge_decomposition_qr}, we have that the conformal Hodge decomposition of $f^*\omega$ with respect to $[f^* g_f]$ is $(f^*d\alpha, f^*d\beta, f^*\gamma)$. Since $[g_f]$ is $f$-invariant, we in fact have $[f^* g_f] = [g_f]$. Moreover, we also see that the conformal Hodge decomposition of $\lambda \omega$ with respect to $[g_f]$ is $(\lambda d\alpha, \abs{\lambda}^{(n-k)/k-1}\overline{\lambda} d\beta, \lambda \gamma)$. Since $f^* \omega = \lambda \omega$ and the decompositions are unique, we therefore must have
	\begin{align*}
		f^* d\alpha &= \lambda d\alpha,\\
		f^* d\beta &= \left(\abs{\lambda}^{\frac{n-k}{k} - 1}
			 \overline{\lambda}\right) d\beta,
		\text{ and}\\
		f^* \gamma &= \lambda \gamma.
	\end{align*}
	
	It follows that $d\alpha$ is a fixed point of $\lambda^{-1} f^*$. However, since $[d\alpha] = [0]$ in $\cehoms{k}(M; \K)$, we may now use Lemma \ref{lem:weak_convergence_to_zero} to conclude that $(\lambda^{-1} f^*)^j d\alpha \to 0$ weakly. Hence, $d\alpha = 0$. The same argument also yields $d\beta = 0$. Hence, we have obtained that $\omega = \gamma \in \pharm{f}{k}(M; \K)$, which concludes the case $k < n$.
	
	Finally, we treat the case $k = n$. In this case, we have $\omega \in L^{1, \sharp}(\wedge^n M; \K) = \cesobs(\wedge^n M; \K) = \cesobs(\wedge^n M; \K) \cap \ker d$, and therefore $\omega$ belongs into a cohomology class $[\omega]$ of $\cehoms{n}(M; \K)$. If $[\omega] = [0]$, then $(\lambda^{-1} f^*)^j \omega \to 0$ weakly by Lemma \ref{lem:weak_convergence_to_zero}, and therefore $\omega = 0 \in \pharm{f}{n}(M; \K)$. If on the other hand $[\omega] \neq [0]$, then $\int_M \omega \neq 0$. By the change of variables formula for quasiregular maps, we obtain that
	\[
		\lambda \int_M \omega = \int_M f^* \omega = (\deg f) \int_M \omega.
	\]
	Hence, we have $\lambda = \deg f$. Since $((\deg f)^{-1} f^*)^j \omega$ converges weakly to a multiple of $\mu_f$ by \cite[Theorem 5.1]{Kangasniemi_CohomBound}, we conclude that $\omega$ represents a multiple of $\mu_f$, and therefore $\omega \in \pharm{f}{n}(M; \K)$.
\end{proof}

We then apply Lemma \ref{lem:eigenvectors_are_harmonic} to the $\odot_{g_f}$-product of two eigenvectors.

\begin{lemma}\label{lem:closed_under_wedges}
	Let $f \colon M \to M$ be a non-constant non-injective uniformly quasiregular map on a closed, connected, oriented Riemannian $n$-manifold $M$, where $n \geq 2$. Let $l, m \in \{0, \dots, n\}$, and let $\K \in \{\R, \C\}$. Suppose that $\omega_1 \in \pharm{f}{l}(M; \K)$ and $\omega_2 \in \pharm{f}{m}(M; \K)$ satisfy $f^* \omega_1 = \lambda_1 \omega_1$ and $f^* \omega_2 = \lambda_2 \omega_2$ for some $\lambda_1, \lambda_2 \in \K$. Then
	\[
		\omega_1 \odot_{g_f} \omega_2 \in \pharm{f}{*}(M; \K).
	\]
\end{lemma}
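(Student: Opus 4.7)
The plan is to exploit the $f$-invariance of the conformal structure $[g_f]$ together with the naturality of $\odot_g$ under pullbacks to show that each homogeneous component of $\omega_1 \odot_{g_f} \omega_2$ is an eigenvector of $f^*$ on a space where Lemma \ref{lem:eigenvectors_are_harmonic} can be applied. The degenerate cases are immediate: if $l = m = 0$ then $\omega_1 \odot_{g_f} \omega_2 = \omega_1 \omega_2$ is constant, and if exactly one of the degrees is zero then $\odot_{g_f}$ collapses to the sum of the constant $1$ and a scalar multiple of the nonzero form. I would therefore assume $l, m \geq 1$.

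Since $[f^* g_f] = [g_f]$ and $\odot_g$ depends only on the conformal class, Corollary \ref{cor:QR_clifford_scaled} gives
\[
    f^*(\omega_1 \odot_{g_f} \omega_2)
    = (f^*\omega_1) \odot_{g_f} (f^*\omega_2)
    = (\lambda_1 \omega_1) \odot_{g_f} (\lambda_2 \omega_2).
\]
Using bilinearity of the Clifford product $\cdot_{g_f}$ over $\K$ together with the scaling \eqref{eq:scaled_clifford_def}, a direct computation for $k \geq 1$ shows
\[
    \dfpart{(\lambda_1 \omega_1) \odot_{g_f} (\lambda_2 \omega_2)}_k
    = \lambda_1 \lambda_2 \abs{\lambda_1\lambda_2}^{-(l+m-k)/(l+m)}
        \dfpart{\omega_1 \odot_{g_f} \omega_2}_k.
\]
Combined with the pullback identity above, this says that each component $\dfpart{\omega_1 \odot_{g_f} \omega_2}_k$ is an eigenvector of $f^*$ with eigenvalue $c_k = \lambda_1 \lambda_2 \abs{\lambda_1 \lambda_2}^{-(l+m-k)/(l+m)}$, whose modulus $(\deg f)^{k/n}$ is exactly the value permitted in Lemma \ref{lem:eigenvectors_are_harmonic}.

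It remains to place each component in $L^{n/k, \sharp}(\wedge^k M; \K)$. By Lemma \ref{lem:Clifford_holder} and the definition of $\odot_{g_f}$,
\[
    \abs{\dfpart{\omega_1 \odot_{g_f} \omega_2}_k}_{g_f}
    = \abs{\dfpart{\omega_1 \cdot_{g_f} \omega_2}_k}_{g_f}^{k/(l+m)}
    \leq C \bigl(\abs{\omega_1}_{g_f} \abs{\omega_2}_{g_f}\bigr)^{k/(l+m)}.
\]
Lemma \ref{lem:p_harm_higher_int} places $\omega_i$ in $L^{n/l_i, \sharp}$, so Hölder's inequality gives $\abs{\omega_1}_{g_f} \abs{\omega_2}_{g_f} \in L^{n/(l+m), \sharp}$, and raising to the exponent $k/(l+m) \in (0,1]$ yields the desired $L^{n/k, \sharp}$ integrability. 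Lemma \ref{lem:eigenvectors_are_harmonic} then gives $\dfpart{\omega_1 \odot_{g_f} \omega_2}_k \in \pharm{f}{k}(M; \K)$ for $1 \leq k \leq n$, while the $k = 0$ component equals the constant $1 \in \pharm{f}{0}(M; \K)$ by definition of $\odot_{g_f}$. Summing over $k$ gives $\omega_1 \odot_{g_f} \omega_2 \in \pharm{f}{*}(M; \K)$.

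The main obstacle I anticipate is the bookkeeping in the scalar-homogeneity computation of the second step: because $\odot_{g_f}$ is non-linear, one must track carefully how the denominator $\abs{\cdot}_{g_f}^{(l+m-k)/(l+m)}$ interacts with the scalar factors $\lambda_1, \lambda_2$ to ensure that the resulting pullback identity is a clean eigenvalue relation with an eigenvalue of modulus $(\deg f)^{k/n}$, rather than one disturbed by degree-dependent factors that would block the application of Lemma \ref{lem:eigenvectors_are_harmonic}. Once this homogeneity is verified, the remainder of the argument is a straightforward combination of Lemma \ref{lem:Clifford_holder}, the higher integrability of Lemma \ref{lem:p_harm_higher_int}, and the eigenvector-to-$p$-harmonic result of Lemma \ref{lem:eigenvectors_are_harmonic}.
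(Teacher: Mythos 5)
Your proposal is correct and follows essentially the same route as the paper: apply Corollary \ref{cor:QR_clifford_scaled} together with the $f$-invariance of $[g_f]$ to see that $\dfpart{\omega_1 \odot_{g_f} \omega_2}_k$ is an eigenvector of $f^*$ with eigenvalue $\abs{\lambda_1\lambda_2}^{k/(l+m)-1}\lambda_1\lambda_2$, use Lemma \ref{lem:Clifford_holder} together with Lemma \ref{lem:p_harm_higher_int} and H\"older's inequality to place it in $L^{n/k,\sharp}(\wedge^k M;\K)$, and then invoke Lemma \ref{lem:eigenvectors_are_harmonic}. The only cosmetic divergence is that you exclude the cases where exactly one degree vanishes rather than only $(l,m)=(0,0)$; the generic argument actually covers that case too, and your explicit handling is nonetheless sound.
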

\begin{proof}
	Let $k \in \{0, \dots, n\}$, and denote $\omega = \dfpart{\omega_1 \odot_{g_f} \omega_2}_k$. We wish to show that $\omega \in \pharm{f}{k}(M; \K)$. We may assume $k > 0$ and $(l, m) \neq (0, 0)$, as the remaining cases are trivial; see the second paragraph of the proof of Lemma \ref{lem:clifford_nonlinear_basis_lemma}.
	
	Now, we may compute using Corollary \ref{cor:QR_clifford_scaled} that
	\begin{multline*}
		f^* \omega
		= \dfpart{f^*(\omega_1 \odot_{g_f} \omega_2)}_k
		= \dfpart{(f^*\omega_1) \odot_{f^* g_f} (f^*\omega_2)}_k\\
		= \big\langle(\lambda_1\omega_1) \odot_{J_f^{2/n} g_f} (\lambda_2\omega_2)\big\rangle_k
		= \dfpart{(\lambda_1\omega_1) \odot_{g_f} (\lambda_2\omega_2)}_k
		= \frac{\lambda_1 \lambda_2}{\abs{\lambda_1\lambda_2}^{
			\frac{l+m-k}{l+m}}} \omega.
	\end{multline*}
	Hence, we have $f^*\omega = \lambda \omega$ with $\lambda = \abs{\lambda_1 \lambda_2}^{k/(l+m) - 1} \lambda_1 \lambda_2$. Moreover, Lemma \ref{lem:Clifford_holder} yields the estimate
	\[
		\abs{\omega}_{g_f} 
		= \abs{\dfpart{\omega_1 \cdot_{g_f} \omega_2}_k}_{g_f}^\frac{k}{l+m} 
		\leq C(n) \left( \abs{\omega_1}_{g_f} \abs{\omega_2}_{g_f}
			\right)^\frac{k}{l+m}
	\]
	Since $\omega_1 \in L^{n/l, \sharp}(\wedge^l M; \K)$ and $\omega_2 \in L^{n/m, \sharp}(\wedge^m M; \K)$, it follows by Hölder's inequality that $\omega \in L^{n/k, \sharp}(\wedge^m M; \K)$. Hence, $\omega \in \pharm{f}{k}(M; \K)$ by Lemma \ref{lem:eigenvectors_are_harmonic}, concluding the proof.
\end{proof}

We then compile the results so far to reach our desired conclusion.

\begin{thm}\label{thm:uqr_elliptic_is_formal_C}
	Let $M$ be a closed, connected, oriented Riemannian $n$-mani\-fold for $n \geq 2$. Suppose that $M$ admits a non-constant, non-injective uniformly quasiregular self-map $f \colon M \to M$. Then $M$ and the invariant conformal structure $[g_f]$ are conformally $\K$-formal, and also conformally $\K$-formal in the Clifford sense, for both $\K = \R$ and $\K = \C$.
\end{thm}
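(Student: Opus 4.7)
The plan is to prove the strongest of the four assertions first, namely that $[g_f]$ is conformally $\C$-formal in the Clifford sense, and then derive the other three by descent. The non-Clifford versions drop out for free: the identity $\dfpart{\omega_1 \odot_{g_f} \omega_2}_{l+m} = \omega_1 \wedge \omega_2$ shows that closure under $\odot_{g_f}$ implies closure under $\wedge$. The real-coefficient versions follow because $\odot_{g_f}$ preserves real-valued forms (the real cotangent space $T^*_x M$ generates a real Clifford subalgebra inside the complexified one, and the scaling by $\abs{\cdot}_{g_f}$ is real); any real product then lies in $\pharm{f}{k}(M; \R) \subset \pharm{f}{k}(M; \C)$, or, in degree $n$, in the real multiples of $\mu_f$, which are declared to be $\pharm{f}{n}(M; \R)$.

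For the $\C$-Clifford case itself, I would take $\pharm{f}{n}(M; \C)$ to be the set of $\C$-multiples of $\mu_f$ that admit a representation as an integrable $n$-form, precisely as in the paragraph preceding Lemma \ref{lem:eigenvectors_are_harmonic}. Since $\mu_f$ is a positive measure, this set lies in $\pharmgeq{g_f}{n}(M; \C)$. By Lemma \ref{lem:closed_under_addition}, each intermediate $\pharm{f}{k}(M; \C)$ for $1 \leq k \leq n-1$ is a finite-dimensional $\C$-vector space, and the degree $0$ and $n$ pieces are obviously linear, so the candidate algebra $\pharm{f}{*}(M; \C)$ is closed under addition.

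To verify closure under $\odot_{g_f}$, the key tool is Lemma \ref{lem:clifford_nonlinear_basis_lemma}, which reduces the task to checking the product on a single graded basis. The natural choice is a basis of $f^*$-eigenvectors: the constant $1$ in degree $0$ with eigenvalue $1$; in each intermediate degree $k$, the eigenvectors $\omega_{c_1}, \dots, \omega_{c_l}$ coming from an eigenbasis of $\cehoms{k}(M; \C)$, which assembles into a basis of $\pharm{f}{k}(M; \C) = \cE^k_f(M; \C)$ thanks to Lemma \ref{lem:closed_under_addition}; and, when nonzero, $\mu_f$ in degree $n$, which is an eigenvector with eigenvalue $\deg f$ by \cite[Theorem 2]{Okuyama-Pankka_measure}. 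Lemma \ref{lem:closed_under_wedges} then guarantees that $\odot_{g_f}$ sends each pair of such basis elements back into $\pharm{f}{*}(M; \C)$, and Lemma \ref{lem:clifford_nonlinear_basis_lemma} upgrades this to closure on the full space, completing the $\C$-Clifford claim.

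The main obstacle has already been discharged in Lemma \ref{lem:closed_under_addition}, whose proof combines the weak convergence argument of Lemma \ref{lem:weak_convergence_to_zero} with the nonlinear Hodge continuity of Theorem \ref{thm:ISS_continuity_complex}, and in Lemma \ref{lem:closed_under_wedges}, which funnels products of eigenvectors back into $\pharm{f}{*}(M; \C)$ via the regularity statement of Lemma \ref{lem:eigenvectors_are_harmonic}. The present theorem is therefore essentially an assembly step; the only mild subtlety is the bookkeeping of the endpoint degrees $0$ and $n$, and in particular the case where $\mu_f$ admits no integrable representative (forcing $\pharm{f}{n}(M; \C) = \{0\}$), which however only occurs for rational cohomology spheres where all intermediate $\pharm{f}{k}(M; \C)$ also vanish, so products landing in degree $n$ are automatically trivial.
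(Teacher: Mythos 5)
Your proposal is correct and follows essentially the same route as the paper: reduce to the complex Clifford case, choose $\pharm{f}{n}(M;\C)$ as the integrable-representative multiples of $\mu_f$, obtain linearity from Lemma~\ref{lem:closed_under_addition}, verify closure under $\odot_{g_f}$ on an $f^*$-eigenbasis via Lemma~\ref{lem:closed_under_wedges}, and upgrade to full closure by Lemma~\ref{lem:clifford_nonlinear_basis_lemma}, then descend to the wedge and real-coefficient cases. The only difference is cosmetic: your worry about the degenerate rational-cohomology-sphere case is already absorbed into Lemma~\ref{lem:eigenvectors_are_harmonic}, so the extra remark, while not wrong, is redundant.
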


\begin{proof}
	It suffices to show the result for $\K = \C$, since the complex versions of addition, scalar product, $\wedge$ and $\odot_{g_f}$ on $\pharm{f}{*}(M; \C)$ restrict to $\pharm{f}{*}(M; \R)$ as the corresponding real versions. Moreover, it suffices to show conformal $\C$-formality in the Clifford sense, as it implies conformal $\C$-formality. By Lemma \ref{lem:closed_under_addition}, the space $\pharm{f}{*}(M; \C)$ is linear. Moreover, $\pharm{f}{*}(M; \C)$ has a graded basis consisting of eigenvectors of $f^*$. By Lemma \ref{lem:closed_under_wedges}, this eigenvector basis is mapped into $\pharm{f}{*}(M; \C)$ by $\odot_{g_f}$. Hence, $\pharm{f}{*}(M; \C)$ is closed under $\odot_{g_f}$ by Lemma \ref{lem:clifford_nonlinear_basis_lemma}. The claim follows.
\end{proof}

With the proof of Theorem \ref{thm:uqr_elliptic_is_formal_C} complete, Theorems \ref{thm:uqr_elliptic_is_formal} and \ref{thm:uqr_elliptic_is_clifford} immediately follow. Theorem \ref{thm:4_manifold_uqr_obstruction} then follows from a combination of Theorem \ref{thm:uqr_elliptic_is_clifford} and Lemma \ref{lem:clifford_4_manifold_lemma}. Finally, Theorem \ref{thm:uqr_ellipticity_is_stronger} is an immediate consequence of Theorem \ref{thm:4_manifold_uqr_obstruction}.

%%%%%%%%%%%%%%%%%%%%%%%%%%%%%%%%%%%%%%% Bibliography %%%%%%%%%%%%%%%%%%%%%%%%%%%%%%%%%%%%%%%

\bibliographystyle{abbrv}
\bibliography{sources}

\end{document}